% !TEX encoding = UTF-8 Unicode
%\documentclass{nmj}
\documentclass[10pt]{amsart}
\usepackage{latexsym}
\usepackage[colorlinks = true,
            linkcolor = blue,
            urlcolor  = blue,
            citecolor = blue,
            anchorcolor = blue]{hyperref}
\usepackage[all]{xy}
\usepackage{pdfpages}

\usepackage{amsmath}
\usepackage{xypic}

\usepackage{amsmath}
\usepackage{amssymb}
\usepackage{amsfonts}
\usepackage{amsthm} %% Take out for NMJ style
\usepackage{mathrsfs}
\usepackage[utf8]{inputenc}

%\usepackage[color,notcite,notref]{showkeys}
%\definecolor{labelkey}{rgb}{1,0,0}

%\usepackage[lmargin=1.0in,rmargin=1.0in,tmargin=1.0in,bmargin=1.0in]{geometry}

%\usepackage{multicol}

\usepackage{fmtcount}
\usepackage{refcount}

\DeclareUnicodeCharacter{00A0}{ }

% Sha
\DeclareFontFamily{U}{wncy}{}
\DeclareFontShape{U}{wncy}{m}{n}{<->wncyr10}{}
\DeclareSymbolFont{mcy}{U}{wncy}{m}{n}
\DeclareMathSymbol{\Sh}{\mathord}{mcy}{"58}

\renewcommand{\implies}{\Rightarrow}

\newtheorem{theo}{Theorem}
\newtheorem{prop}[theo]{Proposition}

\newtheorem{defi}[theo]{Definition}

\newtheorem{coro}[theo]{Corollary}
\newtheorem{lemm}[theo]{Lemma}

\newtheorem*{prob}{Problem}
\theoremstyle{definition}

\newtheorem*{conv}{Convention}
\newtheorem{exam}[theo]{Example}
\newtheorem{rema}[theo]{Remark}

\newcommand{\FF}{\mathbb{F}}

\newcommand{\NN}{\mathbb{N}}
\newcommand{\OO}{\mathcal{O}}

\newcommand{\p}{\mathfrak{p}}

\newcommand{\q}{\mathfrak{q}}

\newcommand{\cS}{\mathcal{S}}

\newcommand{\ZZ}{\mathbb{Z}}
\newcommand{\zpi}{\mathbb{Z}[\tfrac{1}{p}]}
\newcommand{\zll}{\mathbb{Z}_{(l)}}

\DeclareMathOperator{\Ab}{Ab}

\DeclareMathOperator{\Spec}{Spec}
\DeclareMathOperator{\uSpec}{\underline{Spec}}

\DeclareMathOperator{\Tr}{Tr}

\DeclareMathOperator{\id}{id}

\DeclareMathOperator{\Frac}{Frac}

\DeclareMathOperator{\ind}{``\underset{\longrightarrow}{lim}''}

\DeclareMathOperator{\cosk}{cosk}

\newcommand{\PreShv}{\mathsf{PreShv}}
\newcommand{\Shv}{\mathsf{Shv}}
\newcommand{\Sm}{\mathsf{Sm}}

\newcommand{\Sch}{\mathsf{Sch}}
\newcommand{\SCH}{\mathsf{SCH}}

\newcommand{\Nis}{\mathsf{Nis}}
\newcommand{\et}{\operatorname{et}}

\newcommand{\PN}{\mathsf{PseNor}}
\newcommand{\PIC}{\mathsf{PseIntClo}}
\newcommand{\ulh}{\underline{H}}

\newcommand{\Zll}{\ZZ_{(l)}}

\newcommand{\op}{{\operatorname{op}}}
\newcommand{\eff}{{\operatorname{eff}}}
\newcommand{\gm}{{\operatorname{gm}}}
\newcommand{\proper}{{\operatorname{prop}}}

\newcommand{\fps}{{\operatorname{fps}}}
\newcommand{\fpsl}{{\operatorname{fps\!}l'}}

\newcommand{\h}{{{\operatorname{h}}}}
\newcommand{\cdh}{{{\operatorname{cdh}}}}

\newcommand{\ldh}{{l{\operatorname{dh}}}}
\newcommand{\uh}{\operatorname{uh}}
\newcommand{\cduh}{\operatorname{cduh}}
\newcommand{\red}{{\operatorname{red}}}
\newcommand{\ic}{{\operatorname{ic}}}
\newcommand{\pic}{{\operatorname{pic}}}
\newcommand{\length}{{\operatorname{length}}}
\newcommand{\Bl}{{\operatorname{Bl}}}

\newcommand{\cdd}{{\operatorname{cdd}}}

\begin{document}

\title{A better comparison of $\cdh$- and $\ldh$-cohomologies}
\author{Shane Kelly}
%\author{
%  \name{Shane Kelly}
%  \authorfootnote{Author footnote}
%  \address{Department of Mathematics \\
%Tokyo Institute of Technology \\
%2-12-1 Ookayama, Meguro-ku \\
%Tokyo 152-8551, Japan}
%  \email{shanekelly@math.titech.ac.jp}
%} 

%\subjclass[2010]{14A05, 18F10, 14F42} % ADD IN FOR NMJ style
%% 14A05  Relevant commutative algebra (to algebraic geometry)
%% 14F20  	Étale and other Grothendieck topologies and (co)homologies
%% 18F10  	Grothendieck topologies
%% 14G17  Positive characteristic ground fields
%% 14F42  	Motivic cohomology; motivic homotopy theory

\begin{abstract}
In order to work with non-Nagata rings which are Nagata ``up-to-completely-decomposed-universal-homeomorphism'', specifically finite rank hensel valuation rings, we introduce the notions of \emph{pseudo-integral closure}, \emph{pseudo-normalisation}, and \emph{pseudo-hensel valuation ring}.

We use this notion to give a shorter and more direct proof that $H_\cdh^n(X, F_\cdh) = H_\ldh^n(X, F_\ldh)$ for homotopy sheaves $F$ of modules over the $\zll$-linear motivic Eilenberg-Maclane spectrum. This comparison is an alternative to the first half of the author's volume Astérisque 391 whose main theorem is a cdh-descent result for Voevodsky motives. 

The motivating new insight is really accepting that Voevodsky's motivic cohomology (with $\zpi$-coefficients) is invariant not just for nilpotent thickenings, but for all universal homeomorphisms.
\end{abstract}

%\tableofcontents

\maketitle

\begin{center}
Warmly dedicated to Shuji Saito on his 60th birthday.
\end{center}

%\vspace{0.5cm}

%\tableofcontents

\section{Introduction}

%The reader not interested in applications is invited to go directly to the ``Main result'' subsection on page~\pageref{sec:mainResult}. 

%\subsection*{Context---(compact support) motives of singular schemes} \label{sec:context}
\textbf{Context---motives of singular schemes (with compact support).} %
In \cite{Voe00}, Voevodsky constructed a triangulated category of motives $DM^\eff_\gm(k)$ using smooth schemes. In order to 
 \begin{itemize}
  \item extend the motive functor $M: \Sm_k {\to} DM^\eff_\gm(k)$ from smooth $k$-schemes to all separated finite type $k$-schemes, and 

  \item have access to a well-defined theory of motives with compact support $M^c: \Sch_k^{\proper} {\to} DM^\eff_\gm(k)$, %, cf.~\cite{KelCdh}.
\end{itemize}
he proves a $\cdh$-descent result. However, his proof only works in the presence of strong resolution of singularities (for example, in characteristic zero). In \cite{Kel17}, the resolution of singularities assumption was removed, at least if one works with $\ZZ[\tfrac{1}{p}]$-coefficients, where $p$ is the exponential characteristic of the base field. The proof in \cite{Kel17} has two main steps, namely \cite[Cor.2.5.4]{Kel17} and \cite[Thm.3.2.12]{Kel17}. This present article provides a shortcut to the first one. For more details about the strategy of \cite{Kel17} we recommend consulting \cite[Chap.1]{Kel17} and/or \cite[Chap.4]{Kel17}.

\textbf{Main result.} \label{sec:mainResult} The main theorem of this article is:

\begin{theo} {(cf{.} Thm.\ref{theo:mainTheorem})}
Suppose $S$ is a finite dimensional noetherian separated scheme of positive characteristic $p \neq l$, $\Sch_S$ the category of separated finite type $S$-schemes, and $F$ a presheaf of $\Zll$-modules on $\Sch_S$ satisfying \emph{all} of:
\begin{enumerate}
 \item[($\uh$-invariance)] $F(X) \cong F(Y)$ for any universal homeomorphism $Y \to X$. 

 \item[(Traces)] $F$ has covariant ``trace'' morphisms associated to finite flat surjective morphisms, see Definition~\ref{def:traces}.

 \item[(G1)] $F(R) \subseteq F(\Frac(R))$ for every finite rank hensel valuation ring\footnote{We implicitly extend $F$ to all quasi-compact separated $S$-schemes using left Kan extension. Cf.Conventions on page~\pageref{convention}.} $R$.

 \item[(G2)] $F(R) \to F(R/\p)$ is surjective for every finite rank hensel valuation ring $R$ and prime $\p \subset R$. 
\end{enumerate}
Then the canonical comparison morphism is an isomorphism:
\[ H_\cdh^n(S, F_\cdh) \stackrel{\sim}{\to} H_\ldh^n(S, F_\ldh). \]
\end{theo}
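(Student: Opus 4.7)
The plan is to proceed via the Leray spectral sequence for the change-of-topology morphism $\epsilon\colon (\Sch_S)_\ldh \to (\Sch_S)_\cdh$. Since the $\ldh$-topology is finer than the $\cdh$-topology, I would reduce to showing that the canonical map $F_\cdh \to \epsilon_\ast F_\ldh$ is an isomorphism and that $R^q\epsilon_\ast F_\ldh = 0$ for $q > 0$. Both assertions can be checked on stalks at $\cdh$-points, which by standard results are essentially finite-type finite-rank henselian valuation rings over $S$ --- precisely the rings featuring in hypotheses G1 and G2. The whole problem therefore reduces to showing that for every such $R$ one has $F(R) \stackrel{\sim}{\to} F_\ldh(\Spec R)$ and $H_\ldh^n(\Spec R, F_\ldh) = 0$ for $n > 0$.

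\textbf{Reduction to a trace calculation.} Next, any $\ldh$-cover of $\Spec R$ should admit a refinement factoring as a $\cdh$-cover followed by an $\fpsl$-cover (finite flat surjective of degree prime to $l$). Since $\Spec R$ is $\cdh$-local, $\cdh$-covers split, so a Čech computation reduces the task to understanding a single $\fpsl$-cover $\pi\colon \Spec R' \to \Spec R$. Because $R$ is henselian, $R'$ decomposes as a finite product of local rings $\prod_i R'_i$, each finite over $R$; at this point hypotheses G1 and G2 will allow a dévissage along the primes of $R$ --- G1 controlling the generic behaviour via injection into the fraction field, G2 pinning down what happens on residues --- reducing the analysis to each component separately.

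\textbf{Trace argument.} Once $\pi$ is brought into workable form, the \emph{Traces} hypothesis supplies $\pi_\ast\colon F(R') \to F(R)$, and a projection-formula-type identity should give $\pi_\ast \pi^\ast = [R'\!:\!R]\cdot \id_{F(R)}$. Since the degree is prime to $l$ and the coefficients lie in $\Zll$, this endomorphism is invertible, so $F(R) \hookrightarrow F(R')$ is a split monomorphism. Feeding this into the Čech complex of $\pi$ and its higher fibre products should produce a contracting homotopy killing the higher Čech $\fpsl$-cohomology; together with $\cdh$-locality of $\Spec R$ this would yield the required vanishing of $R^q \epsilon_\ast F_\ldh$ on $\Spec R$ and the identification $\epsilon_\ast F_\ldh \cong F_\cdh$ as $\cdh$-sheaves.

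\textbf{Main obstacle.} The principal difficulty --- and the reason this paper exists --- is that finite-rank henselian valuation rings need not be Nagata: the integral closure of $R$ in a finite extension of $\Frac(R)$ may fail to be a finite $R$-module, and the local components $R'_i$ above need not be valuation rings in any tractable sense. I would handle this using the paper's \emph{pseudo-integral closure} / \emph{pseudo-normalisation} framework: these replacements are module-finite and differ from the honest integral closure / normalisation only by a universal homeomorphism, so the $\uh$-invariance of $F$ allows me to substitute $R'$ by its pseudo-normalisation without altering any value of $F$. The trace argument then proceeds cleanly in this Nagata-like setting, and combining this with the spectral sequence reduction of the first step completes the comparison.
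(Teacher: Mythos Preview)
Your overall architecture --- Leray/change-of-topology spectral sequence, reduction to finite-rank hvrs as $\cdh$-points, refinement of $\ldh$-covers by $\fpsl$-covers over an hvr, and the pseudo-normalisation device to circumvent non-Nagata-ness --- matches the paper's. But two genuine gaps remain.

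First, the trace argument as you state it gives a contracting homotopy for the \v{C}ech complex of $F$, hence $\check{H}^n_{\fpsl}(R,F)=0$; it does \emph{not} directly yield $H^n_{\ldh}(R,F_{\ldh})=0$. The terms of the \v{C}ech complex live over the iterated fibre products $R'\otimes_R R'\otimes_R\cdots$, which are not hvrs, so on them $\ldh$-covers are not refinable by $\fpsl$-covers and the argument does not iterate. The paper closes this gap by a lexicographic induction on $(\dim A,\,j)$ over \emph{all} finite $R$-algebras $A$: one passes from $A$ to a pseudo-integral closure $A'$ via the blowup long exact sequence (this is precisely where (G2) is used, to get surjectivity of $F_{\ldh}(A')\to F_{\ldh}(A'/\p)$ when $j=1$), then by $\uh$-invariance to the hvr $\widetilde{A'}$, and finally the \v{C}ech-to-derived spectral sequence $\check{H}^i_{\ldh}(\widetilde{A'},\ulh^j_{\ldh}F)\Rightarrow H^{i+j}_{\ldh}(\widetilde{A'},F_{\ldh})$ collapses by the inductive hypothesis on smaller $j$. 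Your ``d\'evissage along the primes of $R$'' does not capture this; in particular your description of the role of (G2) is not where it actually enters.

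Second, once you have reduced to $\check{H}^J_{\fpsl}(\widetilde{A'},-)$, the presheaf in the slot is $F_{\ldh}$, not $F$, so you need traces on $F_{\ldh}$. This is not automatic: the paper devotes a separate section to it, embedding $F_{\ldh}\hookrightarrow F_{\cdd}=\prod_{x\in(-)}F(x)$ via (G1) and Proposition~\ref{prop:FisFldhonHvr}, putting traces on $F_{\cdd}$, and then checking they restrict to $F_{\ldh}$ (Proposition~\ref{prop:ldhTraces}). Without this step the contracting homotopy of Lemma~\ref{lemm:hcVanish} is not available for the sheaf whose cohomology you are trying to kill.
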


However the main \emph{result} of this article is its proof. The proof of \cite[Cor.2.5.4]{Kel17} is a poorly structured collection of lemmas, which are difficult to arrange into some kind global narrative, and the hypotheses of \cite[Cor.2.5.4]{Kel17} are an awkward list of very special properties that don’t give much insight into why the cohomologies should agree.

The proof of Theorem~\ref{theo:mainTheorem} on the other hand, is short%
\footnote{Even though the proof ``finishes'' on page~\pageref{sec:cdhldh}, of course this introductory section does not form part of the proof, most if not all of Sections~\ref{sec:uh} and \ref{sec:hvr} is background scheme theory included for the convenience of the reader, and most of Section~\ref{sec:pn} is routine checking that pseudo-integral closures have the properties that we want. If one was writing in the more concise style preferred by some authors, one could fit the proof in 10 pages, probably less.} %
 and linear, and one can mostly explain how its hypotheses are used in the proof: $\zll$-linearity and traces are to give descent for \mbox{finite-flat-surjective-prime-to-$l$} morphisms, cf.Lem.\ref{lemm:hcVanishIntro}, universal homeomorphism invariance is to correct non-Nagata-ness of hensel valuation rings, cf.Lem.\ref{lem:introExistPN}, and (G2) is to control $\ulh^1_\ldh F$, cf. Equation~\ref{eq:bup} on page~\pageref{eq:bup}. Unfortunately, (G1) remains a little mysterious. Its easy to say how it is used in the proof (it produces inclusions $F(X) \subseteq F_\ldh(X) \subseteq \prod_{x \in X} F(x)$, cf. Lem.\ref{lemm:ldhcdd}, Prop.\ref{prop:ldhTraces}), but not why it should be a necessary ingredient. Cf. also Remark~\ref{rema:whyGersten} for more on this.
 
The class of presheaves covered by the hypotheses of Theorem~\ref{theo:mainTheorem} is reasonably large. Any cohomology theory representable in the motivic stable homotopy category is invariant under universal homeomorphisms (at least with $\ZZ[\tfrac{1}{p}]$-coefficients), \cite{EK18}, \cite[Lem.3.15]{CD15}. It is quite common for cohomology theories in algebraic geometry to have some kind of trace or transfer morphisms, see Example~\ref{exam:traces} for a list of examples. Many cohomology theories in algebraic geometry satisfy (G1), cf. Gersten's Conjecture for algebraic $K$-theory \cite[Thm. 7.5.11]{Qui73}, or Cousin complexes in the Bloch-Ogus-Gabber theorem \cite{CTHK}. In particular, the cohomology theories representable in the motivic stable homotopy category that we are interested in satisfy (G1), \cite[Rmk.3.2]{KM18}, \cite[Thm.3.3.1]{Kel17}.

Condition (G2) seems newer. It is true for algebraic $K$-theory, or more generally, for nilpotent invariant theories commuting with filtered colimits which satisfy ``Milnor'' excision, \cite[Lem.3.5]{KM18}. The author is currently working with Elmanto, Hoyois, and Iwasa to prove ``Milnor'' excision for $SH$ with $\ZZ[\tfrac{1}{p}]$-coefficients. Cf. also Bhatt and Mathew's newly minted arc-topology \cite[Thm.1.6]{BM18}.

%\subsection*{The problem---non-Nagata-ness}

\textbf{The problem---non-Nagata-ness.} One of the first things one might try when comparing the cohomology of a finer topology $\lambda$ with a coarser one $\sigma$ is the change of topology spectral sequence
\[ H^p_\sigma(X, (\ulh_\lambda^q F)_\sigma) \implies H^{p+q}_\lambda(X, F_\lambda). \]
If one can show that $H^p_\lambda(P, F_\lambda) = 0$ (for $p > 0$ and $F_\lambda(P) = F(P)$) for schemes $P$ in a family inducing a conservative family of fibre functors of the $\sigma$-topos, it follows that $(\ulh_\lambda^qF)_\sigma = 0$ (for $p > 0$ and $F_\sigma = F_\lambda$), the spectral sequence collapses, and one is done. If $(\sigma, \lambda) = ($Zariski, étale$)$ then this would be to show that $H^p_{\et}(-, F_{\et})$ vanishes on all local rings. In our setting where $(\sigma, \lambda) = (\cdh, \ldh)$ it amounts to showing that $H_\ldh^p(-, F_\ldh)$ vanishes on finite rank hensel valuation rings, cf. Appendix B.%Section~\ref{sec:nonnoeth}. 

To prove this vanishing, one would like to use a structure of trace morphisms on $F$ (as formalised in Definition~\ref{def:traces}) and the well-known fact that every $\ldh$-covering of (the spectrum of) a hensel valuation ring is refinable by a finite flat surjective morphism of degree prime to $l$, via something like the following well-known lemma:

\begin{lemm}{(Lemma~\ref{lemm:hcVanish}, {\cite[Lemma 2.1.8]{Kel17}})} \label{lemm:hcVanishIntro}
Suppose that $F$ is a $\zll$-linear presheaf with traces in the sense of Definition~\ref{def:traces}, and $f: Y \to X$ a finite flat surjective morphism of degree prime to $l$. Then the complex
\[ 0 \to F(X) \to F(Y) \to F(Y \times_{X} Y) \to F(Y \times_{X} Y \times_{X} Y) \to \dots \]
is exact. 
\end{lemm}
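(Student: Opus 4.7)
The strategy is the classical degree-$d$ \v{C}ech acyclicity argument: I want to exhibit the augmented \v{C}ech complex of $f$ as a retract, up to the scalar $d := \deg f$, of a manifestly contractible cosimplicial object, after which one simply inverts $d$. Concretely, let $Y_\bullet$ denote the \v{C}ech nerve of $f$, so $Y_n = Y \times_X \cdots \times_X Y$ with $n+1$ factors; the complex in the statement is the unnormalised cochain complex of the augmented cosimplicial abelian group $F(X) \to F(Y_\bullet)$.

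First I would base change along $f$ itself to obtain the augmented simplicial scheme $Y_\bullet \times_X Y \to Y$, which is the \v{C}ech nerve of $Y \times_X Y \to Y$. The diagonal $\Delta : Y \to Y \times_X Y$ is a section of this latter map, and thus provides the ``extra degeneracy'' that furnishes a simplicial contraction of this augmented simplicial scheme. Applying $F$ level-wise, the augmented cosimplicial abelian group $F(Y) \to F(Y_\bullet \times_X Y)$ is therefore cochain-contractible, and in particular its associated complex is exact.

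Next I would use the projection $\mathrm{pr} : Y_\bullet \times_X Y \to Y_\bullet$, which is the base change of $f$ at every simplicial level and therefore finite flat surjective of degree $d$, to produce the retract. Pullback $\mathrm{pr}^*$ gives a morphism of augmented cosimplicial abelian groups $F(Y_\bullet) \to F(Y_\bullet \times_X Y)$, and the trace structure of Definition~\ref{def:traces} supplies a map $\Tr(\mathrm{pr})$ in the opposite direction. The two key inputs are (i) the standard degree identity $\Tr(\mathrm{pr}) \circ \mathrm{pr}^* = d \cdot \mathrm{id}$, and (ii) the fact that the level-wise traces $\Tr(\mathrm{pr}_n)$ commute with all \v{C}ech face maps, which reduces to the base-change compatibility of traces against the cartesian squares formed by omitting a factor. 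Point (ii) is the only genuinely technical step, and it is precisely what the axioms of Definition~\ref{def:traces} are designed to provide. Once (i) and (ii) are in hand, $\Zll$-linearity of $F$ together with $\gcd(d,l) = 1$ makes $d$ invertible on all groups in sight, so $\tfrac{1}{d}\Tr(\mathrm{pr})$ is a cosimplicial retraction of $\mathrm{pr}^*$; a retract of a contractible cochain complex is contractible, hence acyclic, which yields the desired exactness.
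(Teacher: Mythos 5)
Correct, and essentially the same argument as the paper's proof of Lemma~\ref{lemm:hcVanish}: your retract-of-a-contractible-complex packaging unwinds (via $Y_n \times_X Y \cong Y_{n+1}$) to the explicit contracting homotopy $\tfrac{1}{d}\Tr_{d_i}$ written there, resting on exactly the same inputs (Deg), (CdB) and $\Zll$-linearity. The one point to tidy is that (Deg) only applies to morphisms of \emph{constant} degree, so for a general $f$ of locally constant degree prime to $l$ one should first split $X$ into clopen pieces and invoke (Add) --- which is why the formal statement, Lemma~\ref{lemm:hcVanish}, carries the constant-degree hypothesis.
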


The problem is that finite flat algebras over hensel valuation rings are not necessarily products of hensel valuation rings. We can try to return to hensel valuation rings by normalising, cf. Lemma~\ref{lemm:normHvr}, but then this takes us out of the category of finite algebras. To summarise: 

\begin{prob}
Given a hensel valuation ring $R$ and a finite faithfully flat $R$-algebra $R \to A$, in general, there is no $A$-algebra $A'$ such that $R \to A'$ is finite faithfully flat and $A'$ is a product of hensel valuation rings.
\end{prob}

The normalisation $R \to \widetilde{A}$ is a product of hensel valuation rings, and faithfully flat, but unless $\Frac(\widetilde{A}) / \Frac(R)$ is finite separable and $R$ is discrete \cite[Chap.6,Sec.8,No.5,Thm.2,Cor.1]{Bou64}, the morphism $R \to \widetilde{A}$ is in general no longer finite, not even for a general discrete valuation ring $R$, cf. Example~\ref{exam:nonNagata}. \\

%\subsection*{The solution---pseudo-normalisations}

\textbf{The solution---pseudo-normalisations.} The observation which rescues us is that $\zpi$-motivic cohomology, and more generally the sheaves we are interested in, are invariant under universal homeomorphism. Since we can restrict our attention to finite rank hensel valuation rings, cf. Corollary~\ref{coro:finiteRankConservative}, and all residue field extensions of a finite extension of valuation rings are finite, we don't have to normalise to catch all the information in the normalisation that we need. 

\begin{lemm}{(See Lemma~\ref{lemm:subNormalisation})} \label{lem:introExistPN}
Suppose $R$ is a finite rank hensel valuation ring and $R \to A$ a finite faithfully flat algebra. Then there exists an $A$-algebra $A'$ such that $R \to A'$ is finite faithfully flat and $\Spec(\widetilde{A'}) \to \Spec(A')$ induces an isomorphism on underlying topological spaces and all residue fields.
\end{lemm}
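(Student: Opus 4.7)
The plan is to exhibit $A'$ explicitly as the $A$-subalgebra of $\widetilde{A}$ generated by finitely many carefully chosen elements, and to show this is forced to be finite over $R$ by the finite rank hypothesis.

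First I would use the structure of $\widetilde{A}$: since $A$ is a finite algebra over the henselian ring $R$, it decomposes as a product of henselian local rings, and hence by Lemma~\ref{lemm:normHvr} the normalisation $\widetilde{A}$ is a finite product $\prod_{i=1}^n V_i$ of hensel valuation rings. Each $V_i$ is the integral closure of $A/\q_i$ (for $\q_i$ a minimal prime of $A$) inside a finite extension of $\Frac(R/(\q_i \cap R))$, so $V_i$ is a valuation ring on a finite extension of a residue field quotient of $R$. In particular each $V_i$ has finite rank (bounded by the rank of $R$), and so $\Spec(\widetilde{A})$ is a finite set of primes $\widetilde{\q}_1, \dots, \widetilde{\q}_m$.

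Next I would invoke the fact mentioned in the introduction --- that residue field extensions of a finite extension of valuation rings are finite (which propagates from $R$ being finite rank via Bourbaki-style dimension inequalities) --- to conclude that each residue field $\kappa(\widetilde{\q}_k)$ is a finite extension of $\kappa(\widetilde{\q}_k \cap R)$, and hence of $\kappa(\widetilde{\q}_k \cap A)$. For each $k$ I pick finitely many elements $\alpha_{k,1}, \dots, \alpha_{k, r_k} \in \widetilde{A}$ whose images in $\kappa(\widetilde{\q}_k)$ generate it as a field over the image of $A$; I also include the idempotents $e_1, \dots, e_n$ of the product decomposition $\widetilde{A} = \prod V_i$, together with, for each $V_i$, a uniformiser at each non-generic prime of the chain $\q_{i,0} \subsetneq \dots \subsetneq \q_{i,r_i}$. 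Let $A'$ be the subalgebra of $\widetilde{A}$ generated by $A$ and this finite collection.

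For the verification: each generator is integral over $A$, hence $A'$ is finite over $A$ and thus finite over $R$. As an $R$-submodule of the product $\widetilde{A}$ of valuation rings dominating $R$, $A'$ is $R$-torsion-free, and a torsion-free module over the valuation ring $R$ is flat; surjectivity of $\Spec(A') \to \Spec(R)$ follows from the surjection $\Spec(\widetilde{A}) \twoheadrightarrow \Spec(A') \to \Spec(R)$ (the first by integrality, the composite by faithful flatness of $R \to A \subseteq A'$), so $R \to A'$ is faithfully flat. The equality $\widetilde{A'} = \widetilde{A}$ is automatic from $A \subseteq A' \subseteq \widetilde{A}$ all being integral extensions inside the total ring of fractions. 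Finally, $\Spec(\widetilde{A}) \to \Spec(A')$ is surjective by integrality, injective by the inclusion of the idempotents $e_i$ and of uniformisers separating the primes in each chain, and induces isomorphisms $\kappa(\widetilde{\q}_k \cap A') \stackrel{\sim}{\to} \kappa(\widetilde{\q}_k)$ by the choice of the $\alpha_{k,j}$; being a surjective, injective, continuous map between finite (hence compact) $T_0$ spaces, it is a homeomorphism.

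The main obstacle I expect is a bookkeeping one: controlling the rank of $\widetilde{A}$ and confirming that the finite residue-field extension fact really applies to all primes of each $V_i$ (not just the maximal one) in the generality of non-noetherian henselian valuation rings. Once that is cleanly stated --- which is essentially the reason the hypothesis ``finite rank'' enters the definition of pseudo-hensel valuation ring --- the rest is a routine assembly of finitely many generators.
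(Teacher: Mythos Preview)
Your approach is essentially that of the paper's Lemma~\ref{lemm:subNormalisation}\eqref{lemm:subNormalisation:2}: use finite rank to get finitely many primes, invoke finiteness of all residue field extensions in a generically finite extension of valuation rings (the paper cites \cite[Tag 0ASH]{Stacks}), lift generators of each to $\widetilde{A}$, and let $A'$ be the finite sub-$A$-algebra they generate; the paper reduces to the domain case where you instead throw in the idempotents, which comes to the same thing. Two small repairs: higher-rank valuation rings need not have ``uniformisers'', but you don't need them---once the idempotents are in $A'$ you have $A'=\prod A'_i$ with $R\hookrightarrow A'_i\hookrightarrow V_i$ integral extensions of domains, and since the composite $\Spec(V_i)\to\Spec(A'_i)\to\Spec(R)$ is a bijection with the first map surjective, $\Spec(V_i)\to\Spec(A'_i)$ is already bijective; and a continuous bijection of finite $T_0$ spaces is \emph{not} automatically a homeomorphism (Sierpi\'nski), but $\Spec$ of an integral extension is a closed map, and closed plus bijective gives the homeomorphism.
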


Since our sheaves don't distinguish between $A'$ and the (product of) valuation ring(s) $\widetilde{A'}$, we can use $A'$ as though it were a valuation ring. To work with this lemma we introduce the notion of pseudo-normalisation.

\begin{defi}{({See Def.\ref{defi:pn}})}
Let $A$ be a ring and $A \to B$ an $A$-algebra. Define a \emph{pseudo-integral closure}%
\footnote{If one prefers names that explain the meaning, we find \emph{finite cduh-local integral closure} and \emph{finite cduh-local normalisation} to be the most accurate.} %
of $A$ in $B$ to be a \emph{finite} sub-$A$-algebra 
\[ A \to B^{\pic} \subseteq B^{\ic} \subseteq B \]
of the integral closure $B^{\ic}$ of $A$ in $B$ such that $\Spec(B^\pic) \to \Spec(B^\ic)$ induces an isomorphism on topological spaces and residue fields. A \emph{pseudo-normalisation} of $A$ is a pseudo-integral closure of $A$ in its normalisation 
\[ A \to \breve{A} \subseteq (A^\red)^\sim. \]
\end{defi}

So the lemma above now becomes:

\begin{lemm}{(See Lemma~\ref{lemm:subNormalisation})}
Every finite faithfully flat algebra $A$ over a finite rank hensel valuation ring admits a pseudo-normalisation $A \to \breve{A}$.
\end{lemm}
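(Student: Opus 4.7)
The plan is to reduce to the case where $A$ is a local hensel domain finite over $R$, and then explicitly build $\breve{A}$ as a sub-$A$-algebra of $\widetilde A := (A^\red)^\sim$ generated by finitely many integral elements chosen to (a) generate the residue field extensions $\kappa(\q)/\kappa(A \cap \q)$, and (b) separate the prime chain of $\widetilde A$ under contraction.

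\textbf{Reduction.} Since $R$ is hensel and $A$ is finite over $R$, $A^\red$ decomposes as a product of local hensel reduced rings $\prod_i A_i$. For each local $A_i$ with minimal primes $\p_{i,j}$, reducedness gives an inclusion $A_i \hookrightarrow \prod_j A_i/\p_{i,j}$, and $(A_i)^\sim = \prod_j (A_i/\p_{i,j})^\sim$. Assuming the result in the domain case, the product $\prod_{i,j} \breve{A_i/\p_{i,j}}$ is then a finite sub-$A$-algebra of $(A^\red)^\sim$, with the $\Spec$-condition checked componentwise, so it suffices to treat a local hensel domain $A$ finite over $R$.

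\textbf{Construction.} In that case $\widetilde A$ is the integral closure of $R$ in $\Frac(A)$; since $R$ is hensel, this is a single hensel valuation ring (the valuation on $\Frac(R)$ extends uniquely to the finite field extension $\Frac(A)$), and since integral extensions of valuation rings preserve Krull dimension, $\widetilde A$ has the same finite rank $n$ as $R$. Write its prime chain as $\q_0 \subsetneq \dots \subsetneq \q_n$ and set $\m_i := A \cap \q_i$. By the fundamental inequality, each $\kappa(\q_i)/\kappa(\m_i)$ is a finite field extension. For each $i$ I choose finitely many $b_{i,j} \in \widetilde A$ whose images in $\widetilde A/\q_i$ generate $\kappa(\q_i) = \Frac(\widetilde A/\q_i)$ over $\kappa(\m_i)$ (replacing an element by its inverse if needed so that it lies in the valuation ring $\widetilde A/\q_i$). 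For each $0 \le i < n$ I choose $c_i \in \q_{i+1} \setminus \q_i$. Let $\breve{A}$ be the sub-$A$-algebra of $\widetilde A$ generated by all the $b_{i,j}$ and $c_i$.

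\textbf{Verification and obstacle.} Then $\breve{A}$ is finite over $A$ (finitely many integral generators), the map $\Spec(\widetilde A) \to \Spec(\breve{A})$ is surjective by going-up, and injective because $c_i \in (\breve{A} \cap \q_{i+1}) \setminus (\breve{A} \cap \q_i)$ separates consecutive primes; being a bijection of finite $T_0$ spaces, it is a homeomorphism. The residue field at $\breve{A} \cap \q_i$ contains $\kappa(\m_i)$ together with the images of the $b_{i,j}$, hence equals $\kappa(\q_i)$. The essential point — and potential obstacle — is that this construction only produces a \emph{finite} $A$-algebra because $R$ has finite rank: this bounds both the number of primes to separate and the number of residue-field extensions to generate, so that finitely many $b_{i,j}$ and $c_i$ suffice. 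For infinite-rank valuation rings the analogous construction would require infinitely many generators, explaining the hypothesis.
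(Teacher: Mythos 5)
Your proof is correct and takes essentially the same route as the paper's Lemma~\ref{lemm:subNormalisation}\eqref{lemm:subNormalisation:2}: reduce to the domain case, use finite rank to get finitely many primes of $\widetilde{A}$, each with finite residue field extension, and adjoin finitely many integral generators (the paper obtains injectivity of $\Spec(\widetilde{A}) \to \Spec(\breve{A})$ by factoring the bijection $\Spec(\widetilde{A}) \to \Spec(R)$ through it, rather than via your separating elements $c_i$, but this is a cosmetic difference). One justification should be repaired: a continuous bijection of finite $T_0$ spaces need not be a homeomorphism (identity from a discrete two-point space to the Sierpi\'nski space); here you should instead note that both spectra are chains totally ordered by inclusion and the contraction map preserves this order, or simply invoke Definition~\ref{defi:uh}\eqref{defi:uh:isi} (integral $+$ surjective $+$ universally injective), which your residue-field computation already supplies.
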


%\subsection*{Outline}

\textbf{Outline.} In Section~\ref{sec:uh} (resp. \ref{sec:traces}, resp. \ref{sec:hvr}) we recall some well-known material on \emph{universal homeomorphisms} (resp. \emph{presheaves with traces}, resp. \emph{hensel valuation rings}). An interesting observation is that a morphism of schemes becomes an isomorphism of $\ldh$-sheaves under Yoneda if and only if it is a universal homeomorphism, Cor.\ref{coro:ldhUhIso}, Rem.\ref{rema:ldhUhIso}, (valid for any $l \neq p$). The $h$-version of this statement is well-known and due to Voevodsky \cite{Voe96} for excellent noetherian schemes and Rydh \cite{Ryd10} in general.

In Section~\ref{sec:pn} we introduce the notion of \emph{pseudo-integral closure} and \emph{pseudo-normalisation}, and develop some basic properties.

%Sections~\ref{sec:pn}, \ref{sec:hvrldh}, \ref{sec:tracesFldh}, \ref{sec:cdhldhComp}

Sections~\ref{sec:hvrldh}-\ref{sec:cdhldhComp} (pages~\pageref{sec:hvrldh}-\pageref{sec:cdhldh}) contain our proof.

In Section~\ref{sec:hvrldh} the condition (G1) appears, and we use it to show that for hensel valuation rings $R$ we have $F(R) \cong F_\ldh(R)$, Prop.~\ref{prop:FisFldhonHvr}.

In Section~\ref{sec:tracesFldh} we continue using (G1) to show that traces on $F$ induce traces on $F_\ldh$, Prop.~\ref{prop:ldhTraces}.

Section~\ref{sec:cdhldhComp} (pages~\pageref{sec:cdhldhComp}-\pageref{sec:cdhldh}) contains our main theorem (Theorem~\ref{theo:mainTheorem}) %---that the $\cdh$- and $\ldh$-cohomologies of nice presheaves agree)
 and the rest of its proof.

%Section~\ref{sec:tracesOnH1} (pages~\pageref{sec:tracesOnH1}-\pageref{sec:cdhldh}) contains a technical result that traces on a presheaf $F$ pass to some associated presheaves $E$ and $D$ (Definition~\ref{defi:DE}). This is needed to control $\ulh^1_\ldh F$ if (G1) is satisfied instead of (G2) in Theorem~\ref{theo:mainTheorem}.

In Appendix A %Section~\ref{sec:cdhldh} 
we recall the definitions of the $\cdh$- and $\ldh$-topologies, and observe that for finite dimensional noetherian schemes, the class of \emph{finite dimensional} hensel valuation rings induces a conservative family of fibre functors.

In Appendix B %Section~\ref{sec:nonnoeth} 
we confirm that everything we need passes from finite type separated $S$-schemes to all quasi-compact separated $S$-schemes just as one would expect. \\

%\subsection*{Acknowledgements}
\textbf{Acknowledgements.} The idea for this strategy is from 2013, but it took five years to realise how to get around the non-Nagata-ness of valuation rings. This proof owes its existence to Veronika Ertl. It was during discussions with her about de Rham-Witt differentials of valuation rings that I had the idea to use pseudo-normalisations. I thank Giuseppe Ancona, Matthew Morrow, and Simon Pepin-Lehalleur for comments that improved the writing style. I also thank an anonymous referee for helping catch a mathematical error in a previous version.

%\begin{conv}[Presheaves] \label{conv:preshv}
\textbf{Convention.} \label{convention} We work with a base scheme $S$ which will almost always be separated and noetherian, and often of finite dimension. We write:
\renewcommand{\descriptionlabel}[1]{%
  \hspace\labelsep \upshape\bfseries #1.%
}
\begin{description}
 \item[] $\Sch_S$ for the category of separated finite type $S$-schemes. Unless otherwise indicated, \emph{presheaf} means a presheaf on $\Sch_S$ which is extended to the category 
 \item[] $\SCH_S$ of all quasi-compact separated $S$-schemes by left Kan extension. That is $F(T) = \varinjlim_{T \to X \to S} F(X)$ where the colimit is over factorisations through $X \in \Sch_S$. Another way of saying this, (when $S$ is quasi-compact and quasi-separated), is that 
 
 \item[] \emph{presheaf} means an additive functor $F: \SCH_S^\op \to \Ab$ that commutes with filtered colimits. For more on this see Appendix B. \\%Section~\ref{sec:nonnoeth}. \\

We use
 \item[] $h_-: C \to \PreShv(C); X \mapsto h_X$ for the Yoneda functor, and 
 \item[] $(-)_\tau: \PreShv(C) \to \Shv_\tau(C); F \mapsto F_\tau$ for the sheafification functor. We write 
 \item[] $\underline{H}_\tau^n F$ for the presheaf $H^n_\tau(-, F_\tau)$ associated to a presheaf $F$. This paper's topologies (principally $\cdh, \ldh, \fpsl$) are classically defined on $\Sch_S$ for $S$ noetherian. We extend them to $\SCH_S$ in the canonical way, cf. Appendix B.
 \item[] $Q(A)$ is the total ring of fractions of a ring $A$. We will only ever apply this to reduced rings.
 \item[] $l$ and $p$ are always distinct primes.
\end{description}
%\end{conv}

\section{Universal homeomorphisms} \label{sec:uh}

In this section we recall some basic material on universal homeomorphisms ($\uh$), and show that $\ldh$-sheafification preserves $\uh$-invariances. For some background on the $\ldh$-topology for noetherian, and non-noetherian schemes, we direct the reader to Appendix A and Appendix B, respectively.

\begin{defi} \label{defi:uh}
A morphism of schemes $f: Y \to X$ is a \emph{universal homeomorphism} or $\uh$ if it satisfies the following equivalent conditions.
\begin{enumerate}
 \item For every $X$-scheme $T \to X$, the morphism $T \times_X Y \to T$ induces a homeomorphism on the underlying topological spaces.

 \item {\cite[Tag 01S4, Tag 01S3]{Stacks}} $f$ is a homeomorphism, and for every $y \in Y$, the field extension $k(y) / k(f(y))$ is purely inseparable.

 \item \label{defi:uh:isi} {\cite[Cor.18.12.11]{EGAIV4}}%, \cite[Tag 04DF]{Stacks}} 
 $f$ is integral, %
% \footnote{Recall that a morphism $f: Y \to X$ is integral if it is affine, and for every open affine $U \subset X$ every $a \in \Gamma(f^{-1}U, \OO_{f^{-1}U})$ satisfies a monic polynomial with coefficients in $\Gamma(U, \OO_{U})$, \cite[Tag 01WG]{Stacks}. There are no finiteness conditions.} %
surjective and universally injective.

 \item {\cite[Tag 01WM]{Stacks}} $f$ is affine, surjective, and universally injective.
\end{enumerate}
\end{defi}

Note there are no finiteness conditions. There are universal homeomorphisms that are not of finite type, pertinantly Example~\ref{exam:nonNagata}, and finite universal homeomorphisms that are not finitely presented. %\footnote{
For example $A = \varinjlim_{n \to \infty} \FF_p[x_1, \dots, x_n] / \langle x_ix_j : i \neq j \rangle$ and the map $\phi: A {\to} A; x_1 {\mapsto} x_1^p$. To generate the kernel of $A[y] \to A; \sum a_m y^m \mapsto \sum \phi(a_m) x_1^m$, in addition to $y^p - x_1$ one needs all the $yx_n$ for $n > 1$.%}

\begin{lemm} \label{lemm:subuh}
Suppose that $Y \to X$ is a universal homeomorphism, and $\mathcal{A} \subseteq \OO_Y$ is a sub-$\OO_X$-algebra. Then both of $Y \to \uSpec(\mathcal{A}) \to X$ are universal homeomorphisms. 
\end{lemm}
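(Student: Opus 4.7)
The plan is to use characterisation~(\ref{defi:uh:isi}) of Definition~\ref{defi:uh}: a morphism is a universal homeomorphism if and only if it is integral, surjective, and universally injective. Writing $Z = \uSpec(\mathcal{A})$, we get a factorisation $Y \to Z \to X$. Since $f: Y \to X$ is integral, hence affine, and $\mathcal{A}$ is a quasi-coherent sub-$\OO_X$-algebra of $f_*\OO_Y$, the whole diagram is affine over $X$. So I would work Zariski-locally on $X$ and reduce to a chain of ring inclusions $A \subseteq C \subseteq B$ with $B$ integral over $A$, corresponding to $Y = \Spec(B) \to \Spec(C) = Z \to \Spec(A) = X$.

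First I would check that $\Spec(C) \to \Spec(A)$ is a universal homeomorphism. Integrality of $A \to C$ is immediate since a sub-$A$-algebra of an integral $A$-algebra is integral. Surjectivity follows from lying-over for integral extensions. For universal injectivity, given a prime $\p \subset A$ and two primes $\q_1, \q_2 \subset C$ over $\p$, lying-over for the integral extension $C \to B$ provides primes $\mathfrak{P}_i \subset B$ over $\q_i$; these both lie over $\p$ in $A$, so agree by injectivity of $\Spec(B) \to \Spec(A)$, forcing $\q_1 = \q_2$. Purely inseparability of $k(\q)/k(\p)$ is then inherited as a sub-extension of the purely inseparable extension $k(\mathfrak{P})/k(\p)$.

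Next I would handle $Y = \Spec(B) \to \Spec(C)$. Integrality of $C \to B$ holds because $B$ is already integral over the smaller ring $A \subseteq C$. Surjectivity is again lying-over, now applied to $C \to B$. For universal injectivity, two primes of $B$ over the same prime of $C$ a fortiori lie over the same prime of $A$, so coincide by injectivity of $\Spec(B) \to \Spec(A)$; and the residue extensions $k(\mathfrak{P})/k(\q)$ are sub-extensions of the purely inseparable $k(\mathfrak{P})/k(\p)$, hence purely inseparable themselves.

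I do not expect a serious obstacle: the statement is a formal consequence of the characterisation of universal homeomorphisms as integral\,+\,surjective\,+\,universally injective, combined with the standard facts that subalgebras and over-algebras inside an integral extension remain integral, and with lying-over. The only point worth flagging is the implicit quasi-coherence hypothesis on $\mathcal{A}$ needed for $\uSpec(\mathcal{A})$ to exist as a scheme, which is the conventional reading of ``sub-$\OO_X$-algebra''.
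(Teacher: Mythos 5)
Your proposal is correct and follows essentially the same route as the paper: the paper also invokes characterisation~(3) of Definition~\ref{defi:uh} (integral $+$ surjective $+$ universally injective) and reduces the whole statement to surjectivity of $Y \to \uSpec(\mathcal{A})$, obtained from lying over for the injective integral extension $\mathcal{A} \subseteq \OO_Y$, exactly as in your affine reduction. Your version merely spells out the remaining conditions (integrality, injectivity on primes, purely inseparable residue extensions) that the paper treats as immediate.
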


\begin{proof}
By Definition~\ref{defi:uh}\eqref{defi:uh:isi} it suffices to prove that $Y \to \uSpec(\mathcal{A})$ is surjective. But this follows from integrality and injectivity of $\mathcal{A} \subseteq \OO_Y$, \cite[{Tag 00GQ}]{Stacks}.
\end{proof}

Recall that a morphism of schemes $Y \to X$ is \emph{completely decomposed} if it induces a surjection $Y(K) \to X(K)$ of $K$-points for every field $K$.

\begin{lemm} \label{lemm:uhldh}
Let $S$ be a noetherian separated scheme of positive characteristic $p$. Every universal homeomorphism $Y \to X$ in $\Sch_S$ is refineable by a composition $Y_1 \to Y_0 \to X$ such that $Y_0 \to X$ is proper and completely decomposed, and $Y_1 \to Y_0$ is finite flat surjective and locally of degree a power of $p$.
\end{lemm}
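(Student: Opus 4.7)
The argument proceeds by Noetherian induction on $\dim X$, with the main tools being generic flatness and Raynaud-Gruson's platification by blowing up.

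\emph{Reductions.} Since $Y \to X$ is universally injective and integral by Definition~\ref{defi:uh}\eqref{defi:uh:isi}, and of finite type (as a morphism in $\Sch_S$), it is finite. Next, using the closed immersions $X^\red \hookrightarrow X$ and $\coprod_i X_i \to X^\red$ for the irreducible components $X_i$ of $X^\red$ (both of which are proper and completely decomposed), I would reduce to the case where $X$ is integral. Replacing $Y$ by $Y^\red$ (still a UH, and still integral since $Y$ is homeomorphic to the integral $X$), one gets to the case where $X$ and $Y$ are both integral with $k(Y)/k(X)$ purely inseparable of degree $p^n$ for some $n \geq 0$. The base case $\dim X = 0$ is immediate: $X = \Spec K$, $Y = \Spec L$ for a purely inseparable $L/K$ of degree $p^n$, so take $Y_0 = X$ and $Y_1 = Y$.

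\emph{Inductive step.} Assume $\dim X > 0$ and the lemma for smaller dimension. By generic flatness, choose a dense open $U \subseteq X$ on which $Y|_U \to U$ is finite flat, and set $Z := X \setminus U$ with its reduced structure, so $\dim Z < \dim X$. By the Raynaud-Gruson platification theorem there exists a $U$-admissible blowup $\pi : \widetilde X \to X$ (with center contained in $Z$) such that the strict transform $\widetilde Y \subseteq Y \times_X \widetilde X$ is finite flat over $\widetilde X$. I claim $\widetilde Y \to \widetilde X$ is a universal homeomorphism of constant rank $p^n$: the base change $Y \times_X \widetilde X \to \widetilde X$ is a UH (hence a homeomorphism on underlying spaces), and $\widetilde Y$ is the scheme-theoretic closure of the open dense $Y|_U$ inside $Y \times_X \widetilde X$, so $\widetilde Y$ and $Y \times_X \widetilde X$ share the same underlying topological space; this gives surjectivity and universal injectivity. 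The rank is constant because $\widetilde X$ is integral (blowup of integral is integral), hence connected, and equals the generic rank $[k(Y):k(X)] = p^n$.

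\emph{Assembly.} Apply the induction hypothesis to the finite universal homeomorphism $Y|_Z \to Z$ to obtain a refinement $Y_1^Z \to Y_0^Z \to Z$ with $Y_0^Z \to Z$ proper completely decomposed and $Y_1^Z \to Y_0^Z$ finite flat surjective of $p$-power degree. Set
\[
Y_0 := \widetilde X \sqcup Y_0^Z \to X, \qquad Y_1 := \widetilde Y \sqcup Y_1^Z \to Y_0.
\]
Then $Y_0 \to X$ is proper (a blowup together with $Y_0^Z \to Z \hookrightarrow X$) and completely decomposed (a point of $X$ in $U$ lifts via $\widetilde X$, a point in $Z$ lifts via the refinement of $Z$), and $Y_1 \to Y_0$ is finite flat surjective and locally of $p$-power degree. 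Finally, $Y_1 \to Y_0 \to X$ factors through $Y$: over $\widetilde X$ via $\widetilde Y \subseteq Y \times_X \widetilde X \to Y$, and over $Y_0^Z$ by the inductive factoring through $Y|_Z \subseteq Y$.

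\emph{Main obstacle.} The delicate point is verifying in the inductive step that the Raynaud-Gruson strict transform $\widetilde Y \to \widetilde X$ is still a universal homeomorphism with the prescribed $p$-power degree. Rank-constancy and surjectivity over the exceptional fiber both hinge on combining the flatness produced by Raynaud-Gruson with the rigid ``single-point, purely inseparable residue field'' fiber structure of universal homeomorphisms, which must be tracked through the scheme-theoretic closure that defines the strict transform.
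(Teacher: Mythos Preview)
Your proof is correct and follows essentially the same strategy as the paper's: noetherian induction on $\dim X$, Raynaud--Gruson flatification to produce a finite flat strict transform over a blowup, and assembling with the inductive refinement over the closed complement. The only cosmetic differences are that you reduce further to $X$ integral (the paper merely passes to reductions and allows multiple generic points), and you argue that $\widetilde Y \to \widetilde X$ is a universal homeomorphism to extract surjectivity and $p$-power degree, whereas the paper reads off the $p$-power degree directly from reducedness and the purely inseparable generic fibres.
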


\begin{proof}
We reproduce the standard proof using Raynaud-Gruson flatification and induction on the dimension. The initial case is dimension $-1$, i.e., the empty scheme. Suppose that $Y \to X$ is a universal homeomorphism with $\dim X = n \geq 0$, and suppose that the statement is true for a target scheme of dimension $< n$. Replacing $X$ and $Y$ with their reductions, we can assume that $Y \to X$ is generically flat. In this case, by Raynaud-Gruson flatification \cite[Thm.5.2.2]{RG71}, there exists a nowhere dense closed subscheme $Z \subset X$ such that the strict transform $Y' \to \Bl_Z X$ is globally flat. On the other hand, as $Y \to X$ is an integral morphism of finite type, it is finite. Therefore $Y' \to \Bl_Z X$ is flat \emph{and} finite. Moreover, as $Y \to X$ is a universal homeomorphism, for each generic point $\xi \in X$ with corresponding point $\eta \in Y$, the finite field extension $k(\eta) / k(\xi)$ is purely inseparable. In particular, as $Y'$ and $\Bl_Z X$ are reduced, $Y' \to \Bl_Z X$ is locally of degree a power of $p$. Using the noetherian inductive hypothesis to choose a refinement $W_1 \to W_0 \to Z$ of the universal homeomorphism $Y\times_X Z \to Z$, we have produced the desired refinement $W_1 \sqcup Y' \to W_0 \sqcup \Bl_Z X \to X$ of $Y \to X$.
\end{proof}

\begin{coro} \label{coro:ldhUhIso}
Let $S$ be a noetherian %separated %% DONT NEED S SEPARATED, ONLY $\Sch_S$ TO CONSIST OF SEPARATED S-SCHEMES
scheme of positive characteristic $p \neq l$ and $f: Y \to X$ a universal homeomorphism in $\Sch_S$. Then the image $(h_f)_\ldh \in \Shv_{\ldh}(\Sch_S)$ of $f$ is an isomorphism.%, where $h:\Sch_S {\to} \PreShv(\Sch_S)$ is the Yoneda embedding and $a_{\ldh}: \PreShv(\Sch_S) \to \Shv_\ldh(\Sch_S)$ is the sheafification functor.
\end{coro}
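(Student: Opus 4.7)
The plan is to reduce the statement to two separate cover claims: that $f$ itself is an $\ldh$-cover, and that the diagonal $\Delta_f : Y \to Y \times_X Y$ is also an $\ldh$-cover. In any Grothendieck topos the first of these will make $(h_f)_\ldh$ an epimorphism and the second will make it a monomorphism, so together they produce the isomorphism.

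For the first claim I would appeal directly to Lemma~\ref{lemm:uhldh}, which refines $f$ by a composition $Y_1 \to Y_0 \to X$ in which $Y_0 \to X$ is proper and completely decomposed (hence a $\cdh$-cover) and $Y_1 \to Y_0$ is finite flat surjective of degree a power of $p$. Because $p \neq l$, this degree is prime to $l$, so $Y_1 \to Y_0$ is an $\fpsl$-cover. Both $\cdh$- and $\fpsl$-covers are generators of the $\ldh$-topology, so $Y_1 \to X$ is an $\ldh$-cover, and since by construction it factors through $f$, the morphism $f$ itself is an $\ldh$-cover.

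For the second claim I would first verify that $\Delta_f$ is itself a universal homeomorphism, after which the same Lemma~\ref{lemm:uhldh} applies (note $\Delta_f$ lies in $\Sch_S$, as $Y \times_X Y$ is separated and of finite type over $S$). Since $Y, X \in \Sch_S$ are separated over $S$, $f$ is separated and $\Delta_f$ is a closed immersion. Universal injectivity of $f$, part of Definition~\ref{defi:uh}, is equivalent to $\Delta_f$ being surjective; a surjective closed immersion is cut out by a nil ideal and is therefore a universal homeomorphism, to which Lemma~\ref{lemm:uhldh} then gives the $\ldh$-cover conclusion.

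The only real obstacle, already absorbed into Lemma~\ref{lemm:uhldh}, is that a universal homeomorphism need not be flat or finitely presented and so cannot be directly presented as an $\fpsl$-cover; the Raynaud-Gruson flatification invoked in that lemma is what genuinely does the work. Once both $f$ and $\Delta_f$ are known to be $\ldh$-covers, the remaining verification of the epi/mono criterion for $(h_f)_\ldh$ is a routine unwinding: any $T \to X$ lifts to $Y$ after pulling back along the $\ldh$-cover $f$, and any pair $g_1, g_2 : T \to Y$ equalising $f$ determines $(g_1, g_2) : T \to Y \times_X Y$, which after pullback along the $\ldh$-cover $\Delta_f$ lifts through $\Delta_f$, forcing $g_1 = g_2$ ldh-locally on $T$.
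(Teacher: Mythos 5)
Your proof is correct, and the epimorphism half coincides with the paper's (both invoke Lemma~\ref{lemm:uhldh} to see that $f$ is refinable by, hence is, an $\ldh$-cover). The monomorphism half is genuinely different. The paper argues by hand: it represents two sections with the same image by morphisms $t,t':T'\to Y$ with $T'$ reduced (reduction is a completely decomposed proper morphism, hence a $\cdh$-cover), checks $t=t'$ on the generic points of $T'$ using injectivity of $f$ and pure inseparability of its residue field extensions, and concludes $t=t'$ from separatedness of $f$ and reducedness of $T'$. You instead observe that $\Delta_f$ is a surjective closed immersion (surjectivity of the diagonal being the standard reformulation of universal injectivity), hence a universal homeomorphism, hence an $\ldh$-cover, and deduce monicity of $(h_f)_\ldh$ formally. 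Your route is cleaner and makes the structure transparent --- epi because $f$ is a cover, mono because $\Delta_f$ is a cover --- and it works verbatim for any topology in which universal homeomorphisms are covers. Two small remarks: for $\Delta_f$ you do not actually need the Raynaud--Gruson machinery of Lemma~\ref{lemm:uhldh}, since a surjective closed immersion is already refined by the reduction $(Y\times_X Y)^\red \to Y\times_X Y$, which is a completely decomposed proper morphism and so a $\cdh$-cover; and in the last step you should (as the paper does) first pass to a cover on which the two sections of $(h_Y)_\ldh(T)$ are represented by honest morphisms and their images under $f$ literally agree, before feeding the pair into $Y\times_X Y$ --- a routine reduction you gesture at but should make explicit.
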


\begin{rema} \label{rema:ldhUhIso}
In fact, the converse is also true: \cite[Thm.3.2.9]{Voe96} and \cite[Thm.8.16]{Ryd10} say that the image of $(h)_\h: \Sch_S \to \Shv_\h(\Sch_S)$ is equivalent to the localisation of $\Sch_S$ at the class $\uh$. As $\uh$ is a right multiplicative system and satisfies the 2-out-of-6 property, it follows that a morphism in $\Sch_S$ is in $\uh$ if and only if it becomes an isomorphism in $\Sch_\h(\Sch_S)$, \cite[7.1.20]{KS06}.  Since $\Sch_S \to \Shv_\h(\Sch_S)$ factors through $\Shv_\ldh(\Sch_S)$, the converse of Corollary~\ref{coro:ldhUhIso} follows.
\end{rema}

\begin{proof}
Surjectivity is a result of $f$ being (refineable by) an $\ldh$-cover, Lemma~\ref{lemm:uhldh}. For injectivity, consider some $s, s' \in (h_Y)_\ldh(T)$ sent to the same element of $(h_X)_\ldh(T)$. Let $T' \to T$ be an $\ldh$-cover such that $s, s'$ can be represented by some morphisms $t, t': T' \to Y$ in $\Sch_S$ with $T'$ reduced. Possibly refining $T'$, we can assume that $ft = ft'$. Let $\eta_1, \dots, \eta_n$ be the generic points of $T'$. As $f: Y \to X$ is a universal homeomorphism, it is injective on the underlying topological space, and all residue field extensions are purely inseparable. Consequently, $\hom(\amalg \eta_i, Y) \to \hom(\amalg \eta_i, X)$ is injective, so $t|_{\amalg \eta_i} = t'|_{\amalg \eta_i}$. But $Y \to X$ is separated, and $T'$ reduced, so % the image of $T' \to Y \times_X Y$ lands inside the diagonal, i.e., 
$t = t'$.
\end{proof}

\begin{coro} \label{coro:Huhinv}
Let $S$ be a noetherian scheme of positive characteristic $p \neq l$. For any presheaf $F$ and $n \geq 0$, the associated presheaf $(\ulh^n_\ldh F)(-) = H_\ldh^n(-, F_\ldh)$ is $\uh$-invariant.
\end{coro}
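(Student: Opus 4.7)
The plan is to derive this immediately from Corollary~\ref{coro:ldhUhIso} by reinterpreting sheaf cohomology as $\Ext$ of representable sheaves. Recall that for any Grothendieck topology $\tau$, any presheaf $F$, any object $X$ of the underlying site, and any $n \geq 0$, one has a canonical identification
\[ H^n_\tau(X, F_\tau) \;\cong\; \Ext^n_{\Shv_\tau}\bigl((h_X)_\tau,\, F_\tau\bigr), \]
which is functorial in $X$ via the map $(h_f)_\tau : (h_Y)_\tau \to (h_X)_\tau$ induced by $f: Y \to X$.

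First I would specialise this to $\tau = \ldh$ and to a universal homeomorphism $f: Y \to X$ in $\Sch_S$. By Corollary~\ref{coro:ldhUhIso}, the morphism $(h_f)_\ldh : (h_Y)_\ldh \to (h_X)_\ldh$ is an isomorphism in $\Shv_\ldh(\Sch_S)$. Applying the contravariant functor $\Ext^n_{\Shv_\ldh}(-, F_\ldh)$ therefore produces an isomorphism
\[ H^n_\ldh(X, F_\ldh) \;\stackrel{\sim}{\longrightarrow}\; H^n_\ldh(Y, F_\ldh), \]
which is exactly the statement that $\ulh^n_\ldh F$ sends the universal homeomorphism $f$ to an isomorphism.

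For $f: Y \to X$ a universal homeomorphism with $X, Y$ merely quasi-compact separated $S$-schemes (not necessarily of finite type), I would reduce to the finite-type case in the standard way explained in Appendix~B: write $X$ and $Y$ as cofiltered limits with affine transition maps of objects of $\Sch_S$, use that universal homeomorphisms descend to such a level, and pass $H^n_\ldh(-, F_\ldh)$ through the colimit (again as in Appendix~B).

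The main obstacle, such as it is, is therefore not really in this corollary itself but sits inside Corollary~\ref{coro:ldhUhIso}; once one accepts that $\ldh$-sheafification inverts universal homeomorphisms, the present statement is formal.
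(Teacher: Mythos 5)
Your proof is correct and is essentially the paper's own argument: the paper likewise resolves $F_\ldh$ by injectives and identifies $H^n_\ldh(X,F_\ldh)$ with $H^n(\hom_{\Shv_\ldh}((h_X)_\ldh, I^\bullet))$, so that Corollary~\ref{coro:ldhUhIso} immediately gives $\uh$-invariance. Your extra remark about extending to quasi-compact separated $S$-schemes via Appendix~B is a reasonable (and slightly more careful) supplement, which the paper handles separately in Proposition~\ref{prop:SCH}\eqref{prop:SCH:uhInv}.
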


\begin{proof}
Let $F_\ldh \to I^\bullet$ be an injective resolution in $\Shv_\ldh(\Sch_S)$% of the $\ldh$-sheaf $F_\ldh$ associated to $F$. 
. %
Then we have $(\ulh^n_\ldh F)(-) = H^n(\hom_{\Shv_\ldh}((h_-)_\ldh, I^\bullet))$. But Corollary~\ref{coro:ldhUhIso} says that $(h_-)_\ldh$ sends universal homeomorphisms to isomorphisms. Hence, the same is true of $\ulh^n_\ldh F$.
\end{proof}

\section{Traces and $\fpsl$-descent} \label{sec:traces}

This section contains material on presheaves with traces from \cite{Kel17} included for the convenience of the reader.

\begin{defi}
We abbreviate finite flat surjective to $\fps$, and finite flat surjective of degree prime to $l$ to $\fpsl$.
\end{defi}

\begin{defi}{({\cite[Def.2.1.3]{Kel17}})} \label{def:traces}
Let $\cS$ be a category of schemes admitting fibre products. A \emph{structure of traces} on a presheaf $F: \cS^{op} \to Ab$ is a morphism $\Tr_f: F(Y) \to F(X)$ for every $\fps$ morphism $f: Y \to X$, satisfying the following axioms.
\begin{enumerate}
 \item {(Add)} 
 We have $\Tr_{f \amalg f'} = \Tr_f \oplus \Tr_{f'}$ for every pair $Y \stackrel{f}{\to} X$, $Y' \stackrel{f'}{\to} X'$ of $\fps$ morphisms.
 \item{(Fon)}
 We have $\Tr_f \circ \Tr_g = \Tr_{f \circ g}$ for every composable pair $W {\stackrel{g}{\to}} Y {\stackrel{f}{\to}} X$ of $\fps$ morphisms.
 \item {(CdB)} 
 We have $F(p) \circ \Tr_f = \Tr_g \circ F(q)$ for every $\fps$ morphism $f: Y \to X$ and every cartesian square
 \[ \xymatrix{
W \times_X Y \ar[r]^-q \ar[d]_g & Y \ar[d]^f \\
W \ar[r]_p & X.
 } \]
\item {(Deg)} 
We have $\Tr_f \circ F(f) = d \cdot \id_{F(X)}$ for every $\fps$ morphism $f: Y \to X$ of constant degree $d$.
\end{enumerate} 
A presheaf equipped with a structure of traces is called a \emph{presheaf with traces}. A presheaf with traces taking values in the the category of $R$-modules for some ring $R$, is called a \emph{presheaf of $R$-modules with traces} or an \emph{$R$-linear presheaf with traces}.
\end{defi}

\begin{exam} \label{exam:traces} {({For these and more examples see \cite[Exam.2.1.4]{Kel17}})}\ 
\begin{enumerate}
 \item \label{exam:traces:const} Every constant additive presheaf has a unique structure of traces determined by the axioms (Add) and (Deg).
 
 \item The trace and determinant equip $(\OO, +)$ and $(\OO^*, *)$ respectively with a structure of traces, \cite[Exam.2.1.4(vii)]{Kel17}.

 \item If $F$ is a presheaf with traces, then the associated Nisnevich sheaf $F_{\Nis}$ inherits a unique structure of traces compatible with the canonical morphism $F \to F_{\Nis}$, \cite[Cor.2.1.13]{Kel17}. This is essentially because a finite algebra over a hensel local ring is a product of hensel local rings.

 \item Pushforward of vector bundles would induce a structure of traces on higher $K$-theory $K_n$ and homotopy invariant $K$-theory $KH_n$ for every $n \in \ZZ$, except (Deg) is only satisfied Zariski locally. The Nisnevich sheafifications of these sheaves $(K_n)_{\Nis}$ and $(KH_n)_{\Nis}$ have canonical structures of traces, cf. \cite[Proof of Lem.3.1]{Kel14}.
%
% \item This article would be even shorter if we could prove that the $\uh$-sheaf, cf. Definition~\ref{defi:uh}, associated to a presheaf with traces has a structure of traces. The obstracle to proving this seems connected to the obstacle to proving that the $\cdh$-sheafification of a presheaf with traces inherits a structure of traces.
\end{enumerate}
\end{exam}

\begin{lemm}{({cf.\cite[Lemma 2.1.8]{Kel17}})} \label{lemm:hcVanish}
Suppose that $F$ is a $\zll$-linear presheaf with traces, and $X_\bullet \to X_{-1}$ is a simplicial $X_{-1}$-scheme such that each $X_{n+1} \to (\cosk_n X_\bullet)_{n+1}$ is an $\fpsl$-morphism of constant degree, for example, $X_{n-1} = Y^{\times_X n}$ for some $\fpsl$-morphism of constant degree $Y \to X$. Then the complex
\[ 0 \to F(X_{-1}) \to F(X_0) \to F(X_1) \to F(X_2) \to \dots \]
is exact. Here the morphisms are $\sum (-1)^i d_i$.

Consequently, $F$ is a $\fpsl$-sheaf, and we have both $\check{H}^n_{\fpsl}(-, F) = 0$, and $H^n_{\fpsl}(-, F_{\fpsl}) = 0$ for $n > 0$.
\end{lemm}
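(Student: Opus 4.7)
My plan is to use the trace maps as a contracting homotopy for the simplicial cochain complex, after inverting the degree (legal since $\fpsl$ means degree prime to $l$ and $F$ is $\zll$-linear). I will first treat in detail the Čech nerve case $X_n = Y^{\times_X n+1}$ for an $\fpsl$-morphism $Y \to X$ of constant degree $d$; the general simplicial hypothesis is then handled by induction on simplicial dimension, applying the analogous argument at each level to the trace of $X_n \to (\cosk_{n-1} X_\bullet)_n$.

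\textbf{Core argument (Čech case).} I will base-change the augmented simplicial scheme $X_\bullet \to X_{-1}$ along the augmentation $X_0 \to X_{-1}$ to get an augmented simplicial scheme $X'_\bullet := X_\bullet \times_{X_{-1}} X_0 \to X_0$. For the Čech nerve, $X'_\bullet$ is itself the Čech nerve of $X_0 \times_{X_{-1}} X_0 \to X_0$, whose augmentation admits a section (the diagonal $X_0 \to X_0 \times_{X_{-1}} X_0$), which extends to an extra degeneracy on $X'_\bullet$. By standard simplicial algebra the augmented cochain complex $F(X_0) \to F(X'_0) \to F(X'_1) \to \cdots$ is therefore null-homotopic. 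I will then exhibit the original complex as a cochain-level retract of this null-homotopic one: the forward chain map is pullback along the first projections $\pi^{(n)}: X_n \times_{X_{-1}} X_0 \to X_n$ (and along $X_0 \to X_{-1}$ at the augmentation), and the retraction is $\frac{1}{d}\Tr_{\pi^{(n)}}$ (resp.\ $\frac{1}{d}\Tr_{d_0^{(0)}}$). Axiom (Deg) gives identity on the composition at every level, since each $\pi^{(n)}$ is a base-change of $X_0 \to X_{-1}$ and hence $\fpsl$ of constant degree $d$. Axiom (CdB), applied to the cartesian squares obtained by base-changing the face maps of $X_\bullet$ along $X_0 \to X_{-1}$, shows the trace maps commute with the differentials. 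Hence $F(X_\bullet)$ is a retract of a null-homotopic complex and is therefore exact.

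\textbf{Consequences.} The $\fpsl$-sheaf property will follow from exactness of the first three terms $0 \to F(X_{-1}) \to F(X_0) \to F(X_1)$ applied to (the Čech nerve of) any $\fpsl$-cover, reducing to constant-degree covers by decomposing into connected components of the source. The vanishing $\check{H}^n_{\fpsl}(-, F) = 0$ for $n > 0$ is immediate from exactness of the full Čech complex. The vanishing $H^n_{\fpsl}(-, F_{\fpsl}) = 0$ then follows by the standard Čech-to-derived functor comparison, since Čech cohomology vanishes on a cofinal system of covers.

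\textbf{Main obstacle.} The hard step is verifying that $\{\frac{1}{d}\Tr_{\pi^{(n)}}\}$ commutes with the differentials, i.e., that (CdB) applies cleanly to all the face-map squares at once. This is transparent for the Čech nerve (the squares are literal base-change squares), but in the general simplicial hypothesis one must carefully identify the right cartesian squares arising from $X_{n+1} \to (\cosk_n X_\bullet)_{n+1}$ and track the varying degrees $d_n$ through the induction.
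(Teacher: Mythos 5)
Your argument is complete and correct for the \v{C}ech nerve of a constant-degree $\fpsl$ morphism, and your route to the consequences is legitimate: once $F$ is known to be an $\fpsl$-sheaf with vanishing positive \v{C}ech cohomology on a cofinal system of covers, Cartan's criterion gives $H^n_\fpsl(-,F_\fpsl)=0$ without ever invoking general hypercovers. (Minor slip there: to reduce to constant degree you should decompose the \emph{target} into the clopen strata where the locally constant degree function is constant, not the source into connected components.) But the lemma asserts exactness for an arbitrary simplicial object with each $X_{n+1}\to(\cosk_n X_\bullet)_{n+1}$ an $\fpsl$ morphism of constant degree, and for that general case your mechanism genuinely fails. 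After base change along $X_0\to X_{-1}$ the augmentation $X'_0\to X_0$ acquires a section, but extending that section to an extra degeneracy of $X'_\bullet$ requires, at each stage, lifting a map into $(\cosk_n X'_\bullet)_{n+1}$ through the $\fpsl$ morphism $X'_{n+1}\to(\cosk_n X'_\bullet)_{n+1}$; finite flat surjections of degree prime to $l$ admit no such scheme-theoretic sections in general, so the null-homotopy of the base-changed complex simply does not exist outside the \v{C}ech case. The ``induction on simplicial dimension'' you gesture at is not carried out, and it is exactly the missing content.

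The fix is to dispense with the base change and write the contracting homotopy directly: set $D_n=\deg(X_n\xrightarrow{d_i}X_{n-1})$ and take $h_n=\tfrac{1}{D_n}\Tr_{d_0}\colon F(X_n)\to F(X_{n-1})$. The coskeleton hypothesis gives, for $0\le i<j\le n$, a factorisation $X_{n+1}\to(\cosk_n X_\bullet)_{n+1}\to X_n\times_{X_{n-1}}X_n$ (the fibre product taken over $d_{j-1}$ and $d_i$) through which $d_i$ and $d_j$ become $pr_1$ and $pr_2$; all maps involved are $\fpsl$ of constant degree, so (Fon), (Deg) and (CdB) combine to give $\tfrac{1}{D_{n+1}}\Tr_{d_i}F(d_j)=\tfrac{1}{D_n}F(d_{j-1})\Tr_{d_i}$, and together with $\Tr_{d_0}F(d_0)=D_{n+1}\cdot\id$ this yields $h_{n+1}\delta+\delta h_n=\id$. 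In the \v{C}ech case this homotopy is essentially what your retract-of-a-contractible-complex construction produces after unwinding, but written this way it needs nothing beyond the stated axioms and applies verbatim to the general simplicial hypothesis.
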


\begin{proof}
For each $0 \leq i < j \leq n$ we have the commutative diagram
\[ \xymatrix{
X_{n+1} \ar[r]^-a \ar[drr]_{d_i} \ar@/^12pt/@<1.5ex>[rrr]^{d_j} & (\cosk_n X_\bullet)_{n+1} \ar[r]^-b & X_n \times_{X_{n-1}} X_n \ar[r]^-{pr_2} \ar[d]_-{pr_1} & X_n \ar[d]^{d_i} \\
&& X_n \ar[r]_{d_{j-1}} & X_{n-1} .
} \]
All morphisms are $\fpsl$-morphisms of constant degree, \cite[\href{https://stacks.math.columbia.edu/tag/01GN}{Tag 01GN}]{Stacks}. Setting, $D_m = \tfrac{\deg(X_m \to X_{-1})}{\deg(X_{m-1} \to X_{-1})} = \deg(X_m \stackrel{d_i}{\to} X_{m-1})$, the composition $ba$ is of degree $D_{n+1}/ D_n$. Now, it follows from the above diagram, that
\begin{align*}
 \tfrac{1}{D_{n+1}} Tr_{d_i} F(d_j) 
 \stackrel{(Fon)}{=} \tfrac{1}{D_{n+1}} Tr_{pr_1} Tr_{ba} F(ba) F(pr_2) 
 &\stackrel{(Deg)}{=} \tfrac{1}{D_{n}} Tr_{pr_1} F(pr_2) 
 \\&\stackrel{(CdB)}{=} \tfrac{1}{D_{n}} F(d_{j-1}) Tr_{d_i}
 \end{align*}
In particular, $\tfrac{1}{D_\bullet} Tr_{d_0}$ is a chain homotopy between the zero morphism and the identity morphism of the chain complex in the statement.

For the second statement, notice that any $\fpsl$-morphism is refinable by an $\fpsl$-morphism of constant degree. Hence, the \v{C}ech cohomologies of these two classes of morphisms are the same, and moreover, they generate the same topology. Since the colimit over all hypercovers calculates sheaf cohomology, \cite[Thm.7.4.1(2)]{SGA42}, vanishing of sheaf cohomology also follows from exactness of the sequence.
\end{proof}

\section{Hensel valuation rings} \label{sec:hvr}

Recall that an integral domain $R$ is a \emph{valuation ring} if for all nonzero $a \in \Frac(R)$, we have $a \in R$ or $a^{-1} \in R$. Equivalently, the set of ideals of $R$ is totally ordered by inclusion,  \cite[Chapitre VI, \S1.2, Théorème 1]{Bou64}.

A valuation ring $R$ is a \emph{hensel valuation ring} or \emph{hvr} if it extends uniquely to every finite field extension, \cite[\S4.1]{EP05}. That is, for every finite field extension $L / \Frac(R)$, there exists a unique valuation ring $R' \subseteq L$ such that $L = \Frac(R')$ and $R = L \cap R'$. A valuation ring is a hvr if and only if it satisfies Hensel's Lemma, \cite[Thm.4.1.3]{EP05}.

We will frequently use the following lemma. Recall that $Q(-)$ denotes the total ring of fractions.

\begin{lemm} \label{lemm:normHvr}
Let $R$ be a hvr, let $R \to A$ be a finite $R$-algebra, and let $Q(A^\red) \to L$ be a finite morphism with $L$ reduced. Then the integral closure $A^\ic$ of $A^\red$ in $L$ is a product of valuation rings. If $A$ and $L$ are integral domains the induced morphism $\Spec(A^\ic) \to \Spec(A)$ is a homeomorphism. 
\end{lemm}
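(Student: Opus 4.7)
Both assertions reduce to the defining property of the hensel valuation ring $R$: every finite field extension of $\Frac R$ admits a unique extending valuation ring, which is the integral closure of $R$ in the extension.

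For the first assertion, we decompose. Let $\{\p_i\}$ denote the minimal primes of $A^\red$, set $A_i = A^\red/\p_i$ and $K_i = \Frac A_i$, so that $A^\red \hookrightarrow \prod_i A_i$ and $Q(A^\red) = \prod_i K_i$. Each $A_i$ is a domain finite over $R$, the kernel $\q_i$ of $R \to A \to A_i$ is a prime of $R$, and the quotient $R/\q_i$ is itself a hensel valuation ring (quotients of hvrs by primes are hvrs). Thus $K_i$ is a finite field extension of $\Frac(R/\q_i)$. Since $L$ is reduced and finite over $Q(A^\red) = \prod_i K_i$, it decomposes as $L = \prod_j L_j$ with each $L_j$ a finite field extension of some $K_i$, hence of $\Frac(R/\q_i)$. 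The integral closure of $A^\red$ in $L$ then splits as $A^\ic = \prod_j W_j$, where $W_j$ is the integral closure of $R/\q_i$ in $L_j$, and by the hensel extension property each $W_j$ is a valuation ring.

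For the second assertion, assume $A$ and $L$ are integral domains. Replacing $R$ by $R/\ker(R \to A)$ (still a hvr), we may assume $R \hookrightarrow A$. Let $V := A^\ic$, which is a valuation ring by the first assertion and is integral over $A$. The induced $\Spec V \to \Spec A$ is surjective by lying-over for integral extensions and is closed because integral ring maps induce closed maps on $\Spec$. For injectivity, suppose $\P_1, \P_2 \in \Spec V$ both contract to $\p \in \Spec A$. Since $V$ is a valuation ring, its spectrum is totally ordered, so we may assume $\P_1 \subseteq \P_2$. Passing to the integral extension of domains $A/\p \hookrightarrow V/\P_1$, the prime $\P_2/\P_1$ of $V/\P_1$ contracts to the zero ideal of $A/\p$; but in any integral extension of domains the zero ideal is the only prime lying over zero, so $\P_2/\P_1 = 0$, i.e., $\P_1 = \P_2$. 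Thus $\Spec V \to \Spec A$ is a continuous bijective closed map, hence a homeomorphism.

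The step I expect to warrant the most care is the injectivity in the second assertion, which crucially combines the totally ordered spectrum of the valuation ring $V$ with the classical incomparability property of integral extensions of domains; the rest is bookkeeping built on the one-step henselian extension of $R$.
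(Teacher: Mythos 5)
Your proof is correct and follows essentially the same route as the paper's: reduce to the case of domains and fields, identify the integral closure with the unique henselian extension of the (quotient) valuation ring, and then get the bijection on spectra from lying-over plus incomparability combined with the totally ordered spectrum of the valuation ring. The only cosmetic difference is at the very end, where you conclude "homeomorphism" from the map being a closed continuous bijection (integral maps are closed), while the paper instead invokes that both spectra are totally ordered; both are valid.
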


%$A^\ic = \{ l \in L : f(l) = 0$ for some monic $f(T) \in A[T]\}$.

\begin{proof}
The integral closure $A^\ic$ is the product of the integral closures of the images of $A^\red$ in the residue fields of $L$, so we can assume $A$ is an integral domain and $L$ a field. Replacing $R$ with its image%
\footnote{The image is an integral domain because $A$ is, and its ideals are the ideals of $R$ containing the kernel of $R \to A$, so they are totally ordered. That is, the image of $R$ in $A$ is certainly a valuation ring.} %
in $A$, we can assume $R \to A$ is injective. Note that since $A$ is finite, the integral closure of $A$ in $L$ is equal to the integral closure of $R$ in $L$. %Apply \cite[\href{https://stacks.math.columbia.edu/tag/0308}{Tag 0308}]{stacks-project} to $R \to Q(A) \to L$ to get see that $IC(L/A) \subseteq IC(L/IC(Q(A)(R)) = IC(L/R)$. The opposite inclusion is obvious. 
Now the first claim follows from the facts that the integral closure of a valuation ring is the intersection of the extensions \cite[Cor.3.1.4]{EP05}, and since $R$ is henselian, by definition there is a unique extension to $L$. Now, $R \subseteq A \subseteq A^\ic$ are integral extensions of rings so $\Spec(A^\ic) \to \Spec(A) \to \Spec(R)$ are surjective \cite[\href{http://stacks.math.columbia.edu/tag/00GQ}{Tag 00GQ}]{Stacks}. Since $R \subseteq A \subseteq A^\ic$ are integral extensions, the incomparability property implies that $\Spec(A^\ic) \to \Spec(A) \to \Spec(R)$ are injective. Finally, since the prime ideals of $A^\ic$ and $R$ are totally ordered, the bijection $\Spec(A^\ic) \to \Spec(A)$ is a homeomorphism. 
\end{proof}

\begin{lemm} \label{lemm:uhClosed}
Suppose that $R$ is a hvr of characteristic $p$ with fraction field $K$, and suppose that $K \to K'$ is a purely inseparable extension, and $R'$ the integral closure of $R$ in $K'$. Then $\Spec(R') \to \Spec(R)$ is a universal homeomorphism.
\end{lemm}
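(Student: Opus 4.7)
The plan is to verify the three conditions of Definition~\ref{defi:uh}(\ref{defi:uh:isi}): that $R \to R'$ is integral, surjective, and universally injective. Integrality is immediate from the construction, and surjectivity of $\Spec(R') \to \Spec(R)$ follows from integrality by \cite[Tag 00GQ]{Stacks}. So the substance lies in checking universal injectivity, i.e.\ that $\Spec(R') \to \Spec(R)$ is injective on points and has purely inseparable residue field extensions at every point. Note that Lemma~\ref{lemm:normHvr} cannot be invoked directly since $K'/K$ need not be finite; instead I would argue from scratch, using the valuation-theoretic fact that $R$ is integrally closed in $K$, so $R' \cap K = R$.

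First I would check that $R'$ is itself a valuation ring, so that $\Spec(R')$ is totally ordered. Given any nonzero $\alpha \in K'$, choose $n$ with $\alpha^{p^n} \in K$; then since $R$ is a valuation ring, either $\alpha^{p^n} \in R$ or $\alpha^{-p^n} \in R$, whence $\alpha$ or $\alpha^{-1}$ is integral over $R$ and hence lies in $R'$. With both $\Spec(R)$ and $\Spec(R')$ totally ordered, the incomparability property of the integral extension $R \subseteq R'$ forces $\Spec(R') \to \Spec(R)$ to be a bijection.

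For the residue field extensions, I would pick $\p' \in \Spec(R')$ lying over $\p \in \Spec(R)$ and an arbitrary class $\overline{x} \in k(\p')$ represented by $a/s$ with $a \in R'$ and $s \in R' \setminus \p'$. Choose $n$ large enough that $a^{p^n}, s^{p^n} \in K$; then $a^{p^n} \in R' \cap K = R$ and similarly $s^{p^n} \in R$, while $s^{p^n} \notin \p' \cap R = \p$ since $\p'$ is prime and $s \notin \p'$. Therefore $(a/s)^{p^n} \in R_\p$, and reducing modulo $\p$ gives $\overline{x}^{p^n} \in k(\p)$, establishing pure inseparability of $k(\p')/k(\p)$.

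The main obstacle is mild but worth flagging: one must resist the temptation to appeal to Lemma~\ref{lemm:normHvr} because the hypotheses of that lemma are finiteness hypotheses, whereas here we allow an arbitrary purely inseparable $K'/K$. All of the work above proceeds uniformly in the (possibly infinite) extension $K'/K$, powered solely by the valuation-ring property of $R$ and the identity $R' \cap K = R$.
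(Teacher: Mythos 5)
Your proof is correct and follows essentially the same route as the paper: verify condition (3) of Definition~\ref{defi:uh}, show $R'$ is a valuation ring so that incomparability plus total ordering of the primes gives bijectivity of $\Spec(R') \to \Spec(R)$, and obtain pure inseparability of residue fields by raising representatives to $p$-powers and using that $R$ is integrally closed in $K$. The only (harmless) difference is that the paper deduces that $R'$ is a valuation ring from henselianness of $R$, whereas you give a direct elementary argument from pure inseparability of $K'/K$, which in fact shows henselianness is not needed for this particular step.
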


%This lemma will be used in Proposition~\ref{prop:getfpslIsfsl} to show that the $\fpsl$ \v{C}ech cohomology of a finite rank hvr can be calculated using $\fpsl$-covers that are generically étale.

%\begin{rema}
%The proof seems to work more generally for any $R$ normal, $R'$ integral, and $R \subseteq R'$ finite.
%\end{rema}

\begin{proof}
The extension $R \to R'$ is integral by definition, and $\Spec(R') \to \Spec(R)$ is surjective as it is dominant and satisfies the going up property. %\cite[Tag 01WM]{Stacks}
So it remains to see that it is injective, and each extension of residue fields is purely inseparable. As $R$ is henselian, $R'$ is a valuation ring, and so its poset of primes is totally ordered. By the incomparability property, it follows that there is exactly one prime of $R'$ lying over any prime of $R$. Let $\p \subset R$ be a prime and $\p' \subset R'$ the prime lying over it. Localising at $\p$ we can assume both are maximal ideals. Then a given $a \in k(\p') = R' / \p'$, lifts to some $b \in R'$, and since $K' / K$ is purely inseparable, $b^{p^i} \in K$ for some $i$.  Now $R'$ is the integral closure of $R$ in $K'$, so $b$ satisfies some monic $f(T) \in R[T]$. But then $b^{p^i}$ satisfies the monic $f(T)^{p^i}$. Since valuation rings are normal, it follows that $b^{p^i} \in R$. Consequently, $a^{p^i} \in k(\p) = R / \p$. So $k(\p') / k(\p)$ is purely inseparable.
\end{proof}

\section{Pseudo-normalisation} \label{sec:pn}

One of the obstacles to using valuation rings to study finite type morphisms of noetherian schemes is non-Nagata-ness: Suppose $R$ is a hvr and $R \subset A$ a finite extension with $A$ an integral domain. Then the normalisation $\widetilde{A}$ of $A$ is a valuation ring, Lem~\ref{lemm:normHvr}. If $\Frac(A) / \Frac(R)$ is finite separable and $R$ is discrete then the morphism $R \to \widetilde{A}$ is also finite \cite[Chap.6,Sec.8,No.5,Thm.2,Cor.1]{Bou64}. However, in general, the morphism $R {\to} \widetilde{A}$ may not be finite. 

\begin{exam} {\cite[Chap.6,Sec.8,Exercise 3b]{Bou64}} \label{exam:nonNagata}
Let $k = \FF_p(X_n)_{n \in \NN}$, and equip $K = k(U, V)$, with the (discrete) valuation induced by $\phi: k(U, V) \to k((U)); V \mapsto \sum_{i = 0}^\infty X^p_i U^{ip}$. Let $K' = K(V^{1/p})$. Then, (exercise), the unique extension of valued fields $K'/K$ induces an isomorphism on both value groups and residue fields. Consequently, the corresponding morphism $R \to R'$ of discrete valuation rings is not finite, and so the normalisation $R'$of $R[V^{1/p}]$ is not finite over $R[V^{1/p}]$.
\end{exam}

On the other hand, at least if the rank of $R$ is finite, if we take a large enough member $\breve{A}$ of the filtered poset of finitely generated sub-$A$-algebras of $\widetilde{A}$, the induced morphism $\Spec(\widetilde{A}) \to \Spec(\breve{A})$ will be a universal homeomorphism (and even completely decomposed). Hence, as far as $\uh$-sheaves (and even $\cdh$-sheaves, cf.~\cite[Lem.2.9]{HK18}) are concerned, the normalisation might as well be finite.

To formalise this phenomenon, we introduce the notions of pseudo-integral closure, pseudo-normalisation, and pseudo-hvr.

%Total ring of fractions: https://stacks.math.columbia.edu/tag/02LV
%If A is reduced, and has finitely many minimal primes, then $Q(A) = \prod A_\p$.

\begin{conv}{({\cite[Cor.6.3.8]{EGAII}, \cite[Tag.035N,035P]{Stacks}})}
By \emph{normalisation} $\widetilde{A}$ of a ring $A$ with finitely many minimal primes, we mean the integral closure of its associated reduced ring $A^{\red}$ in the total ring of fractions of $Q(A^{\red})$ of $A^\red$.
\end{conv}

Note, that usually one only talks of normalisations of reduced rings.

\begin{defi} \label{defi:pn}
Let $A$ be a ring and $A \to B$ an $A$-algebra. Define a \emph{pseudo-integral closure} of $A$ in $B$ to be a \emph{finite} sub-$A$-algebra \mbox{$A {\to} B^{\pic} {\subseteq} B^{\ic}$} of the integral closure $B^{ic} \subseteq B$ of $A$ in $B$ such that $\Spec(B^\pic) \to \Spec(B^\ic)$ is a %
completely decomposed %
universal homeomorphism. 

A \emph{pseudo-normalisation} of a ring $A$ with finitely many minimal primes is a pseudo-integral closure of $A$ in its normalisation $A \to \breve{A} \subseteq (A^\red)^\sim$. We will write $\PIC(B/A)$ and $\PN(A)$ for the set of pseudo-integral closures, and pseudo-normalisations respectively.

A \emph{pseudo-hvr} is an integral domain $A$ such that $\widetilde{A}$ is an hvr, and $A$ is a pseudo-normalisation of itself, $A = \breve{A}$. That is, $\Spec(\widetilde{A}) \to \Spec(A)$ is a completely decomposed universal homeomorphism.
\end{defi}

Even if pseudo-integral closures and pseudo-normalisations exist they are certainly not unique in general. If the normalisation resp. integral closure is finite, then it is the final pseudo-normalisation resp. pseudo-integral closure.

The following lemma contains the basic facts we need about pseudo-normalisations $\breve{A}$, at least for finite rings $A$ over a finite rank hvr $R$: \ref{lemm:subNormalisation:2}. they exist, \ref{lemm:subNormalisation:1-5}., \ref{lemm:subNormalisation:Fun}. they are (ind-)functorial for dominant morphisms, and \ref{lemm:subNormalisation:flat}. they preserve flat morphisms.

\begin{lemm} \label{lemm:subNormalisation}
Let $R$ be a finite rank hvr. 
\begin{enumerate}
 \item \label{lemm:subNormalisation:1} Let $A \to B$ be a ring homorphism. If $B^\pic \subseteq B^\ic$ is a pseudo-integral closure, then any finitely generated subalgebra $B^\pic \subseteq B' \subseteq B^\ic$ is also a pseudo-integral closure.
 
 \item \label{lemm:subNormalisation:1-5} If the collection of pseudo-integral closures of a ring homomorphism $A {\to} B$ is non-empty, then it is a filtered poset and $B^\ic = \bigcup_{B^\pic \in \PIC(B/A)} B^\pic$.

 \item \label{lemm:subNormalisation:2} If $A$ is a finite $R$-algebra, and $Q(A^\red) \to K$ finite with $K$ reduced, the poset $\PIC(K/A)$ of pseudo-integral closures is nonempty. In particular, the conclusion of part \eqref{lemm:subNormalisation:1-5} holds. 
  
 \item \label{lemm:subNormalisation:Fun} If $\phi: A \to B$ is a morphism of finite $R$-algebras such that $\Spec(\phi)$ sends generic points to generic points, and 
\[ \xymatrix@R=12pt{
K \ar[r] & L \\
Q(A^\red) \ar[r] \ar[u] & Q(B^\red) \ar[u]
} \]
a square of finite morphisms of reduced rings, then for any $A^\pic \in \PIC(K/A)$ there is a $B^\pic \in \PIC(L/B)$ compatible with the above square. In other words, the canonical morphism of integral closures $A^\ic \to B^\ic$ of $A \to B$ in $K \to L$ induces a morphism of ind-objects 
\[ \ind_{\PIC(K/A)} A^\pic \to \ind_{\PIC(L/B)} B^\pic. \]

 \item \label{lemm:subNormalisation:flat} If $\phi: A \to B$ is a finite flat morphism of finite $R$-algebras, then there exists an inclusion $\breve{A} \subseteq \breve{B}$ of pseudo-normalisations compatible with $\phi$, which is flat.
%let $\PN(\phi)$ denote the set of finite inclusions $\breve{A} \subset \breve{B}$ compatible with $\widetilde{A} \subseteq \widetilde{B}$ which are flat \emph{and} such that $\widetilde{A} \otimes_{\breve{A}} \breve{B} \to \widetilde{B}$ is injective. Then both the canonical forgetful functors 
% \[ \PN(\phi) \to \PN(A), \PN(B); \qquad (\breve{A} \subseteq \breve{B}) \mapsto \breve{A}, \breve{B}  \]
% are cofinal, and elements of $\PN(\phi)$ are preserved by tensor product with morphisms in $\PN(A)$.
\end{enumerate}
\end{lemm}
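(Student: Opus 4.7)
The plan is to prove the five parts in order, each building on the previous.

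Parts (1) and (2) are essentially formal. For (1), factor the completely decomposed universal homeomorphism $\Spec(B^\pic) \to \Spec(B^\ic)$ through $\Spec(B')$; Lemma~\ref{lemm:subuh} shows both factors are universal homeomorphisms, and triviality of residue field extensions passes to the second factor $\Spec(B') \to \Spec(B^\ic)$ because a composition whose total residue extension is trivial forces each factor to be trivial. Since $B'$ is finitely generated and integral over $A$, it is finite. For (2), given $B_1, B_2 \in \PIC(B/A)$, their compositum $B_1 B_2 \subseteq B^\ic$ is finitely generated over $A$ and contains $B_1$, hence lies in $\PIC(B/A)$ by (1); this gives filteredness. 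For each $b \in B^\ic$, the algebra $B^\pic[b]$ is finitely generated and integral, hence finite, so by (1) it lies in $\PIC(B/A)$, showing the union exhausts $B^\ic$.

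Part (3) is the existence heart. By Lemma~\ref{lemm:normHvr} the integral closure $B^\ic$ is a finite product $\prod_{i=1}^n V_i$ of valuation rings; the finite rank of $R$ together with standard valuation theory forces each $V_i$ to have finite rank, so $\Spec(B^\ic)$ is a finite topological space and each residue field extension $k(\mathfrak{p}')/k(\mathfrak{p}' \cap A)$ is finite. My plan is to adjoin to $A$ finitely many explicit elements of $B^\ic$: the idempotents $e_i$ separating the factors; for each prime chain $\mathfrak{p}_{i,0} \subsetneq \cdots \subsetneq \mathfrak{p}_{i,r_i}$ in $V_i$, an element lying in each set-difference $\mathfrak{p}_{i,j} \setminus \mathfrak{p}_{i,j-1}$; and lifts of generators of each finite residue field extension. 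The resulting sub-$A$-algebra $B^\pic$ is finitely generated and integral, hence finite, and the induced map $\Spec(B^\ic) \to \Spec(B^\pic)$ is surjective (integrality), injective (primes are separated by construction), and trivial on residue fields, hence a completely decomposed universal homeomorphism.

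For (4), take any $B_0 \in \PIC(L/B)$ produced by (3); the image of $A^\pic$ in $L$ is integral over $A$, hence over $B$, so lies in $B^\ic$. The sub-$B$-algebra of $B^\ic$ generated by $B_0$ and this image is finitely generated and integral, hence finite, contains $B_0$, so by (1) lies in $\PIC(L/B)$ and fits into the required square. For (5), a finite flat morphism is automatically dominant, so (4) applies; starting from any $\breve{A} \in \PN(A)$, I would take $\breve{B}$ to be the image of the natural map $\breve{A} \otimes_A B \to \widetilde{B}$ (induced by $\breve{A} \subseteq \widetilde{A} \to \widetilde{B}$ and $B \to \widetilde{B}$). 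Flatness of $A \to B$ makes $\breve{A} \to \breve{A} \otimes_A B$ flat, and the completely decomposed universal homeomorphism $\Spec(\widetilde{A}) \to \Spec(\breve{A})$ base-changes to one $\Spec(\widetilde{A} \otimes_A B) \to \Spec(\breve{A} \otimes_A B)$, from which the pseudo-normalisation property for $\breve{B}$ is extracted. The main obstacle lies here: one must verify that $\breve{A} \otimes_A B \to \widetilde{B}$ is injective (so that $\breve{B} \cong \breve{A} \otimes_A B$ stays flat over $\breve{A}$) and that the image is reduced (so it genuinely embeds in $\widetilde{B}$). The key input is that $\breve{A}$ has the same total ring of fractions as $A^\red$, so $(\breve{A} \otimes_A B) \otimes_B Q(B^\red) = Q(B^\red)$, forcing both the kernel and any nilpotents to be $B$-torsion, which a local analysis at the finitely many generic points of $\Spec(B)$ rules out.
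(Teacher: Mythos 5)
Parts (1)--(4) of your proposal are correct and essentially the paper's argument. The only real variation is in (3): you separate the finitely many primes of $B^\ic$ by hand, adjoining idempotents and elements of $\p_{i,j}\setminus\p_{i,j-1}$, whereas the paper observes that $\Spec(A^\ic)\to\Spec(R)$ is already a homeomorphism because $R$ is henselian (Lemma~\ref{lemm:normHvr}), so injectivity of $\Spec(A^\ic)\to\Spec(A^\pic)$ is automatic and only the residue-field generators need to be adjoined. Both routes work; yours is marginally more hands-on but self-contained.

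Part (5) has a genuine gap: the candidate $\breve{B}=\operatorname{im}(\breve{A}\otimes_A B\to\widetilde{B})$ is in general \emph{not} a pseudo-normalisation of $B$. The condition that $\Spec(\widetilde{B})\to\Spec(\breve{B})$ be a completely decomposed universal homeomorphism is a statement about how much of the normalisation of $B$ the subring $\breve{B}$ captures, and this has nothing to do with $\widetilde{A}$; base-changing $\Spec(\widetilde{A})\to\Spec(\breve{A})$ along $A\to B$ only compares $\widetilde{A}\otimes_AB$ with $\breve{A}\otimes_AB$, and $\widetilde{A}\otimes_AB$ is very far from $\widetilde{B}$ (that discrepancy is the whole point of the paper). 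Concretely, take $A=R=\breve{A}$ with $R$ the $t$-adic completion of $\FF_p(u)[t]_{(t)}$, and $B=R[x]/(x^p-ut^p)$, a domain, free of rank $p$ over $R$. Your construction returns $\breve{B}=B$. But $x/t\in\widetilde{B}$ is a $p$-th root of $u$, so the residue field of $\widetilde{B}$ at its closed point is $\FF_p(u^{1/p})\supsetneq\FF_p(u)$, and $\Spec(\widetilde{B})\to\Spec(B)$ is not completely decomposed; hence $B\notin\PN(B)$. (There is also the secondary issue that $\breve{A}\otimes_AB\to\widetilde{B}$ is not injective whenever $B$ is non-reduced, e.g.\ $B=R[x]/(x^2)$, so the isomorphism $\breve{B}\cong\breve{A}\otimes_AB$ you need for flatness already fails there.)

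The repair cannot be simply to enlarge $\breve{B}$ by a pseudo-normalisation of $B$ from part (3), because then flatness over $\breve{A}$ is lost. The paper resolves this tension in the opposite direction: start from any compatible pair $\breve{A}_1\to\breve{B}_1$ supplied by (4); form the sub-$\widetilde{A}$-algebra $B'\subseteq\widetilde{B}$ generated by $\breve{B}_1$, which is torsion-free and finitely generated over the product of valuation rings $\widetilde{A}=(A^\red)^\sim$, hence finite \emph{free}; then use $\widetilde{A}=\bigcup_{\PN(A)}\breve{A}$ to descend $B'$ to a finite free algebra over some $\breve{A}_2\in\PN(A)$, and finally tensor up to a large enough $\breve{A}_3$ so that $\breve{B}_3=\breve{A}_3\otimes_{\breve{A}_2}\breve{B}_2$ contains $\breve{B}_1$ and is therefore in $\PN(B)$ by part (1). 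The descent of a finite free algebra through a filtered colimit of subrings is the key step your proposal is missing.
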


%We imagine there may be a pseudo-integral closure generalisation of part \eqref{lemm:subNormalisation:flat} but we don't need it. In fact, we only use part \eqref{lemm:subNormalisation:flat} in Section~\ref{sec:tracesOnH1} so if (G2) is satisfied in Thm.\ref{theo:mainTheorem} we don't need part \eqref{lemm:subNormalisation:flat} at all.

\begin{proof}\ 
\begin{enumerate}
 \item Since $B^\ic {\supseteq} B^\pic$ is integral so is $B^\ic {\supseteq} B'$, and consequently, $\Spec(B^\ic) \to \Spec(B')$ is surjective, \cite[\href{http://stacks.math.columbia.edu/tag/00GQ}{Tag 00GQ}]{Stacks}. But $\Spec(B^\ic) \to \Spec(B^\pic)$ is a homeomorphism, so $\Spec(B^\ic) \to \Spec(B')$ is also injective. Finally, for any point $x \in \Spec(B^\ic)$ with images $y,z$ in $\Spec(B'), \Spec(B^\pic)$, %pure inseparability of $k(x) / k(z)$ implies pure inseparability of $k(x) / k(y)$.
The isomorphism $k(x) \cong k(z)$ implies an isomorphism $k(x) \cong k(y)$.

 \item Follows from part \eqref{lemm:subNormalisation:1}.

 \item Let $A^\ic$ be the integral closure of $A^\red$ in $K$. Since $A^\red$ is reduced with finitely many minimal primes, $Q(A^\red)$ is a finite product of fields, \cite[Tag 02LV]{Stacks}, so $K$ is also a finite product of fields. Hence $A^\ic$ is the product of the integral closures in the residue fields of $K$, and therefore it suffices to consider the case $A$ and $K$ are both integral domains. Replacing $R$ with its image in $A$, we can also assume $R \to A$ is injective. Now $R \subseteq A^\ic$ is an extension of valuation rings, Lem.\ref{lemm:normHvr}. As $R \subseteq A^\ic$ is generically finite, for every prime $\p \subset R$, the extension $k(\p) \subset k(\q)$ is finite, where $\q \subset A^\ic$ is the prime lying over $\p$, \cite[Tag 0ASH]{Stacks}. For each prime of $R$, choose a set of generators of the corresponding finite field extension, lift them to $A$, and let $A^\pic$ be the sub-$R$-algebra that they generate. Certainly, $A^\pic$ is a finite $R$-algebra. The morphism $A^\pic \subseteq A^\ic$ is an integral ring extension so 
 %, and birational by construction, so by the going up theorem 
 $\Spec(A^\ic) \to \Spec(A^\pic)$ is surjective, \cite[\href{http://stacks.math.columbia.edu/tag/00GQ}{Tag 00GQ}]{Stacks}. As $R$ is henselian, $\Spec(A^\ic) \to \Spec(R)$ is a homeomorphism, and we conclude that $\Spec(A^\ic) \to \Spec(A^\pic)$ is also a homeomorphism. By construction it induces isomorphisms on all field extensions. Hence, it is a completely decomposed universal homeomorphism. So $\PIC(K/A)$ is nonempty.

 \item This follows from Part~\ref{lemm:subNormalisation:2}---Existence, and Part~\ref{lemm:subNormalisation:1}---Closure under subextension: Choose any $B^\pic_0 \in \PIC(B)$ and take $B^\pic$ to be the sub-$B$-algebra of $L$ generated by the image of $A^\pic$ and $B^\pic_0$. 

 \item %The tensor product claim follows from part 1: If $\breve{A_0} \to \breve{B_0}$ is in $\PN(\phi)$ and $\breve{A_0} \subseteq \breve{A_1}$ in $\PN(A)$ then $\breve{A_1} \otimes_{\breve{A_0}} \breve{B_0} \subseteq \widetilde{A} \otimes_{\breve{A_0}} \breve{B_0}$ by flatness in the definition of $\PN(\phi)$. Now as long as $\PN(\phi)$ is nonempty, $\PN(\phi) \to \PN(A)$ being cofinal follows from $\PN(A)$ being filtered and the pullback claim. Since nonemptiness will follow from cofinality of $\PN(\phi) \to \PN(B)$, it remains only to prove this latter. 
We will construct the following diagram. Note all morphisms except possibly the ones with source $A$ and $B$ are inclusions. %We write $\tilde{A}$ and $\tilde{B}$ instead of $(A^\red)^\sim$ and $(B^\red)^\sim$ for compactness of notation.
%\[ \xymatrix{
%B \ar[r] & \breve{B}_0 \ar[r] & \breve{B}_1 \ar@/^12pt/[rr] & \breve{B}_2 \ar[r] & \breve{B}_3 \ar[r] & B' \ar[r] \ar@{}@<3pt>[r]^(0.4){= \langle \breve{B}_1, (A^\red)^\sim\rangle} & (B^\red)^\sim \\
%A \ar[u] \ar[rr] && \breve{A}_0 \ar[r] \ar[u]^(0.7){\textrm{f.t., not}}^{\textrm{necessarily}}^(0.3){\textrm{flat}} & \breve{A}_2 \ar[r] \ar[u]^(0.3){\textrm{f.t.flat}} \ar@{}[ur]|\square & \breve{A}_3 \ar[r] \ar[u]^(0.3){\textrm{f.t.flat}} \ar@{}[ur]|\square & (A^\red)^\sim \ar[u]^(0.3){\textrm{f.t.flat}} \ar[ur] 
%} \]
\[ \xymatrix{
B \ar[r]  & \breve{B}_1 \ar@/^12pt/[rr] & \breve{B}_2 \ar[r] & \breve{B}_3 \ar[r] & B' \ar[r] \ar@{}@<3pt>[r]^(0.4){= \langle \breve{B}_1, (A^\red)^\sim\rangle} & (B^\red)^\sim \\
A \ar[u] \ar[r] & \breve{A}_1 \ar[r] \ar[u]^(0.7){\textrm{f.t., not}}^{\textrm{necessarily}}^(0.3){\textrm{flat}} & \breve{A}_2 \ar[r] \ar[u]^(0.3){\textrm{f.t.flat}} \ar@{}[ur]|\square & \breve{A}_3 \ar[r] \ar[u]^(0.3){\textrm{f.t.flat}} \ar@{}[ur]|\square & (A^\red)^\sim \ar[u]^(0.3){\textrm{f.t.flat}} \ar[ur] 
} \]
 
Given some $\breve{A_1} \to \breve{B_1}$ extending $A \to B$, consider the sub-$(A^\red)^\sim$-algebra $\breve{B}_1 \subseteq B' \subseteq (B^\red)^\sim$ generated by the image of $\breve{B_1}$. As $(B^\red)^\sim$ is $(A^\red)^\sim$-torsion free, so is $B'$, and as $(A^\red)^\sim$ is a (product of) valuation rings, Lem.\ref{lemm:normHvr}, we deduce that $B'$ is $(A^\red)^\sim$-flat. As $\breve{A}_0 \to \breve{B}_1$ is finite type, $(A^\red)^\sim \to B'$ is also finite type. Since $(A^\red)^\sim$ is a product of local rings, the finitely generated flat module $B'$ is a free module (noetherianness is not needed, \cite[Thm.7.10]{Mat89}). As $(A^\red)^\sim = \bigcup_{\PN(A)} \breve{A}$, we have%
\footnote{\label{foot:descff} This is true for any globally free finite flat morphism $A := \varprojlim A_\lambda \to B$ from the colimit of a filtered system: Choose an $A$-basis $1 = e_1, \dots, e_n$ for $B$ whose first element is the unit. Then the multiplication of $B$ is determined by the $n(n-1)^2$ coefficients of the products $e_ie_j = \sum_k a^{ij}_k e_k \in B \cong A^n$ for $2 \leq i, j \leq n, 1 \leq k \leq n$ subject to linear and quadratic conditions imposed by the commutivity and associativity axioms. These $n(n-1)^2$ elements are in the image of some $A_\lambda$, and the conditions become satisfied in some, possibly higher, $A_{\lambda'}$. They then define a finite free $A_{\lambda'}$-algebra of rank $n$ whose pullback to $A$ is $B$.
%Choose a basis $1 = e_1, \dots, e_n$ for $B' \cong ((A^\red)^\sim)^n$ whose first element is the unit. Then the multiplication of $B'$ is determined by the $n(n-1)^2$ coefficients of the products $e_ie_j \in ((A^\red)^\sim)^n$ for $2 \leq i, j \leq n$ subject to conditions imposed by the commutivity and associativity axioms. Since $(A^\red)^\sim = \bigcup_{\PN(A)} \breve{A}$, these $n(n-1)^2$ elements are contained in some $\breve{A_2} \in \PN(A)$, and we take $\breve{B_2}$ to be the finite free $\breve{A_2}$-algebra that they define.
} %
 $B' \cong (A^\red)^\sim \otimes_{\breve{A_2}} \breve{B_2}$ for some finite free $\breve{A_2} \to \breve{B_2}$ with $\breve{A_2}$ in $\PN(A)$ (the ring $\breve{B_2}$ is not necessarily in $\PN(B)$ yet). Since $(\bigcup_{\breve{A} \supseteq \breve{A_2} \in \PN(A)} \breve{A}) \otimes_{\breve{A_2}} \breve{B_2} \cong B' \supseteq \breve{B}_1$, tensoring with some large enough $\breve{A}_3 \supseteq \breve{A}_2$ in $\PN(A)$, we get our $\breve{B}_3 \cong \breve{A}_3 \otimes_{\breve{A}_2} \breve{B}_2 \supseteq \breve{B}_1 \supseteq \breve{B}_0$, with $\breve{A}_3 \to \breve{B}_3$ in $\PN(\phi)$. %\qedhere
\end{enumerate}
\end{proof}

%\begin{rema}
%In the proof of part~\eqref{lemm:subNormalisation:2}, by also including lifts of generators for the value group extension $\Gamma_{\widetilde{A}} / \Gamma_{R}$ we can also ask that $\Frac(\breve{A}) / \breve{A}^* \to \Frac(\widetilde{A}) / \widetilde{A}^*$ is an isomorphism, or in other words, that $\breve{A} \to \widetilde{A}$ is unramified, cf. \cite[Tag 0ASH]{Stacks}. We don't need this here, but it seems like a useful observation.
%\end{rema}

\begin{lemm} \label{lemm:refRef}
Let $R$ be a hvr of characteristic $p \neq l$. Then any $\fpsl$-morphism $R \to A$ is corefinable by the composition of a generically étale $\fpsl$-morphism $R \to A'$ and a $\uh$-morphism $A' \to A''$ such that $R \to A''$ is also $\fpsl$.
\end{lemm}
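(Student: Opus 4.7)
The plan is to realize $R \to A''$ as a composition $R \to A' \to A''$ where $A'$ captures the maximal separable part of the generic fibre of $A$ and $A' \to A''$ is a universal homeomorphism arising from an iterated-Frobenius factorisation. The construction is cleanest when $A$ is an integral domain, so first I would reduce to this case. Write $L := A \otimes_R K = \prod_\alpha L_\alpha$ for the decomposition of the generic fibre into local Artinian factors, let $\q_\alpha \subset A$ be the corresponding minimal primes, and put $B_\alpha := A/\q_\alpha$; each is a finite $R$-algebra which is a domain, with $B_\alpha \otimes_R K = L_\alpha^{\red}$. Since $[L:K] = \sum_\alpha \length(L_\alpha) \cdot [L_\alpha^{\red}:K]$ is prime to $l$, there is some index $\alpha_0$ with $[L_{\alpha_0}^{\red}:K]$ prime to $l$. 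Once the $A''$ associated to $B_{\alpha_0}$ is constructed below, the composition $A \twoheadrightarrow B_{\alpha_0} \hookrightarrow A''$ serves as the required map $A \to A''$. Henceforth we may assume $A$ is a domain with $L = \Frac A$ a finite field extension of $K$ of degree prime to $l$.

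Let $L^s \subseteq L$ be the maximal separable subextension, so $[L:L^s] = p^e$ for some $e \geq 0$. Choose $R$-algebra generators $\gamma_1, \dots, \gamma_m \in A$ and $K$-algebra generators $\alpha_1, \dots, \alpha_r \in L^s$ rescaled to be integral over $R$; since $L/L^s$ is purely inseparable of exponent $p^e$, each $\gamma_k^{p^e} \in L^s$. Define
\[
A' := R[\alpha_j, \gamma_k^{p^e}] \subseteq L^s, \qquad A'' := A'[\gamma_k] = R[\alpha_j, \gamma_k] \subseteq L.
\]
Both are finite integral $R$-algebras and, being torsion-free subrings of the field $L$, are free over the valuation ring $R$. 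Surjectivity of $\Spec A', \Spec A'' \to \Spec R$ follows from integrality. Generically, $A' \otimes_R K = L^s$ is étale over $K$, and $A'' \otimes_R K = L$; the degrees $[L^s : K]$ and $[L:K] = p^e \cdot [L^s:K]$ are both prime to $l$ (since $l \neq p$). Also $A \hookrightarrow A''$ by construction.

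The pivotal step is showing $A' \to A''$ is a universal homeomorphism. The iterated Frobenius $F^e: A'' \to A''$, $x \mapsto x^{p^e}$, takes values in $A'$: $F^e(\gamma_k) = \gamma_k^{p^e} \in A'$ by construction, and $F^e(\alpha_j) = \alpha_j^{p^e} \in R[\alpha_j] \subseteq A'$. Hence $F^e$ factors as $A'' \xrightarrow{\overline F^e} A' \xrightarrow{\iota} A''$, and the analogous factorisation shows $F^e|_{A'} = \overline F^e \circ \iota$. Since $\Spec F^e$ is the identity on any $\FF_p$-scheme, $\Spec \iota: \Spec A'' \to \Spec A'$ is a bijective homeomorphism with inverse $\Spec \overline F^e$. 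For each prime $\p'' \subset A''$ with contraction $\p' = \iota^{-1}(\p'')$, the factorisation descends to a ring map $A''/\p'' \xrightarrow{\overline F^e} A'/\p'$, forcing $(A''/\p'')^{p^e} \subseteq A'/\p'$, so $k(\p'')/k(\p')$ is purely inseparable. Combined with integrality of $A' \subseteq A''$, this yields the universal homeomorphism property.

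The main obstacle is really the reduction step: one must verify that some local Artinian factor of the generic fibre has $L_\alpha^{\red}$ of $K$-dimension prime to $l$. This is a short modular-arithmetic argument using the additive decomposition of $[L:K]$, but it is where the non-domain subtlety is absorbed. Once one works in the domain case, the Frobenius factorisation produces the UH essentially mechanically.
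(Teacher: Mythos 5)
Your proof is correct. The overall skeleton coincides with the paper's --- reduce to the case that $A$ is a domain by locating a local Artinian factor of the generic fibre whose residue field has degree prime to $l$ over $\Frac(R)$ (your length-times-residue-degree count is the same computation the paper does with $\dim_K A_{\p_i} = \sum_j \dim_K \p_i^j/\p_i^{j+1}$), and then interpose the maximal separable subextension $L^s \subseteq L = \Frac(A)$. But the pivotal step, that $A' \to A''$ is a universal homeomorphism, is handled by a genuinely different mechanism. The paper takes $A'$ and $A''$ to be pseudo-integral closures of $R$ and $A$ in $L^s$ and $L$ via Lemma~\ref{lemm:subNormalisation}\eqref{lemm:subNormalisation:Fun}, so that $A' \to \widetilde{A'}$ and $A'' \to \widetilde{A''}$ are completely decomposed universal homeomorphisms by construction, and then invokes Lemma~\ref{lemm:uhClosed} for $\widetilde{A'} \to \widetilde{A''}$. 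You instead take the explicit finitely generated subalgebras $A' = R[\alpha_j, \gamma_k^{p^e}]$ and $A'' = R[\alpha_j, \gamma_k]$ and factor the $e$-fold Frobenius of $A''$ through $A'$; this yields the homeomorphism on spectra and the pure inseparability of the residue field extensions in one stroke. Your route is more elementary and self-contained: it uses neither the pseudo-normalisation machinery of Section~\ref{sec:pn} nor, in fact, henselianness or finite rank of $R$ (note that the paper's proof quotes Lemma~\ref{lemm:subNormalisation}, whose hypotheses include finite rank, whereas Lemma~\ref{lemm:refRef} is stated for an arbitrary hvr --- your argument sidesteps this point entirely). What the paper's route buys is that its $A'$ and $A''$ come packaged as pseudo-integral closures, consistent with the framework used elsewhere; but the downstream application (Prop.~\ref{prop:FisFldhonHvr}) only uses the properties appearing in the statement of the lemma, so nothing is lost by your construction. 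One small correction of emphasis: the reduction to the domain case is not really ``the main obstacle'' --- it is the same routine counting argument as in the paper --- and the substance of the lemma is the Frobenius/purely-inseparable step, which you handle correctly.
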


\begin{proof}
First consider the case when $A$ is an integral domain. Consider the separable closure $L$ of $\Frac(R)$ in $\Frac(A)$, and choose pseudo-integral closures $A', A''$ of $R, A$ in $L, \Frac(A)$ respectively, Lemma~\ref{lemm:subNormalisation}\eqref{lemm:subNormalisation:Fun}.
\[ \xymatrix@R=12pt{
\Frac(R) \ar[r]^{\textrm{sep.}} & L \ar@<3pt>[r]^{\textrm{purely insep.}} & \Frac(A) \\
R \ar[r]^{\textrm{finite}} \ar[u] \ar[drr] & A' \ar[r]^{\textrm{finite}} \ar[u] & A'' \ar[u] \\
&& A \ar[u] 
} \]
By definition, $A' \to \widetilde{A'}$ and $A'' \to \widetilde{A''}$ induce completely decomposed $\uh$-morphisms, and $\widetilde{A'} \to \widetilde{A''}$ induces a universal homeomorphism by Lem.\ref{lemm:uhClosed}. Hence $A' \to A''$ is a universal homeomorphism.

So now it suffices to show that every $\fpsl$-morphism $R \to A$ is corefinable by an $\fpsl$-morphism $R \to A''$ with $A''$ an integral domain. Suppose first that $R$ is a field $K = R$. Then one of the residue fields of $A$ is of degree prime to $l$: Indeed, write $A = \prod A_{\p_i}$ as the product of its local rings. As $l \nmid \dim_K A$ we have $l \nmid \dim_K A_{\p_i}$ for some $i$. As $\dim_K A_{\p_i} = \sum_j \dim_K  \p_i^j / \p_i^{j+1}$ and each $\p_i^j / \p_i^{j+1}$ is a $k(\p_j)$-vector space, we see that $l \nmid [k(\p_i) : K]$. For a general $R$, the previous case applied to $\Frac(R) \to \Frac(R) \otimes_R A$ produces a minimal prime $\p$ of $A$ such that $l \nmid [A_\p^\red : \Frac(R)]$. But then since flat = torsion-free over valuation rings, $R \to A / \p$ is $\fpsl$.
\end{proof}

%\begin{lemm} \label{lemm:phvrPresFin}
%Suppose that $A$ is a pseudo hvr. Then $A \to \widetilde{A}$ is a filtered colimit of morphisms $A \to A' \to \widetilde{A}$ such that $\Spec(A') \to \Spec(A)$ is a completely decomposed universal homeomorphism of \emph{finite presentation}.
%\end{lemm}
%
%\begin{proof}
%By definition $\Spec(\widetilde{A}) \to \Spec(A)$ is a completely decomposed universal homeomorphism $(\cduh)$. Consider the collection of pairs of \emph{finite} sets $(\{a_i\}_{i \in I}, \{b_j\}_{j \in J})$ where $a_i \in \widetilde{A}$ and $b_j \in \ker(A[x_i : i \in I] \to \widetilde{A}; x_i \mapsto a_i)$. Union induces the set $M$ of pairs $(\{a_i\}, \{b_j\})$ with the structure of a commutative mono{\"i}d. Our indexing category has an object for every $m \in M$, and $\hom(m, m') = \{n \in M : m + n = m'\}$. One checks directly that this category is filtered. %
%% given $m, m'$, the object $m + m'$ admits a morphism from both of them. Given $n, n'$ such that $m + n = m + n' := m'$, the morphism $n + n'$ from $m'$ to itself has $n + n + n' = n' + n + n'$ as a morphism from $m$ to $m'$.
%%
%The object $(\{a_i\}, \{b_j\})$ is sent to $A \to A[x_i] / \langle b_j \rangle \to A[a_i] \to \widetilde{A}$.
%Note $\Spec$ of all of these morphisms are $\cduh$ because $\Spec$ of $A \to \widetilde{A}$ is a $\cduh$.
%\end{proof}

\begin{lemm} \label{lemm:finhvrrefine}
If $R$ is a finite rank hvr and $R \to A$ a finite $R$-algebra, then every $\ldh$-cover of $\Spec(A)$ is refinable by a finite one.
\end{lemm}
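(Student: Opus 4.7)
The plan is to decompose an arbitrary $\ldh$-cover into its three generating types---Nisnevich covers, abstract blow-ups, and $\fpsl$-morphisms---and refine each separately to a finite $\ldh$-cover. Since finite morphisms are stable under composition, this would suffice. The cases of $\fpsl$-morphisms and Nisnevich covers are essentially immediate: the former are finite by definition, while for the latter one uses that $A$, being finite over the hensel local ring $R$, is itself a finite product of hensel local rings (by lifting idempotents from $A/\mathfrak{m}A$), over which Nisnevich covers admit sections by the defining property of henselianness, refining to the identity.

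The main case is an abstract blow-up $p : Y \to X$ with nowhere dense center $Z \subsetneq X$. Reducing to the irreducible components of $\Spec(A)$, I would assume $A$ is a domain, whereupon its normalisation $\widetilde{A}$ is an hvr by Lemma~\ref{lemm:normHvr}. The generic point of $X$ lies in $X \setminus Z$ over which $p$ is an isomorphism, so it admits a unique lift to $Y$; the valuative criterion of properness then extends this lift to a section $\sigma : \Spec(\widetilde{A}) \to Y$ over $X$. The key observation is that $\widetilde{A} = \varinjlim_{\breve{A} \in \PN(A)} \breve{A}$ is a filtered union of pseudo-normalisations (Lemma~\ref{lemm:subNormalisation}\eqref{lemm:subNormalisation:1-5}, \eqref{lemm:subNormalisation:2}); since the $\ldh$-cover $Y$ is finitely presented over $X$ (being the base change of some $Y_0 \to X_0$ in $\Sch_S$, cf.\ Appendix B), the section $\sigma$ factors as $\Spec(\widetilde{A}) \to \Spec(\breve{A}) \to Y$ for some sufficiently large $\breve{A} \in \PN(A)$, which by construction is finite over $A$.

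It then remains to verify that $\Spec(\breve{A}) \to X$ is itself an $\ldh$-cover. It is finite and surjective by integrality of $A \subseteq \breve{A}$, and since $\breve{A} \subseteq \widetilde{A} \subseteq \Frac(A)$ the extension $A \hookrightarrow \breve{A}$ is birational, i.e.\ an isomorphism over a dense open of $X$. Hence $\Spec(\breve{A}) \to X$ is an abstract blow-up, therefore a $\cdh$-cover, and in particular an $\ldh$-cover factoring through $p$. The main obstacle anticipated is the passage between properness of $p$ (exploited via the valuative criterion applied over the non-finite morphism $A \subseteq \widetilde{A}$) and the finite refinement (extracted via the filtered colimit factorisation through a pseudo-normalisation); this is precisely where the finite-rank hypothesis on $R$ is essential, through Lemma~\ref{lemm:subNormalisation}\eqref{lemm:subNormalisation:2}.
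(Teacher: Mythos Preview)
Your overall strategy---get a section of the cover over $\Spec(\widetilde{A})$ via the valuative criterion, then descend to something finite over $A$---is exactly the paper's idea, and your handling of the Nisnevich and $\fpsl$ pieces is fine. But there is a genuine gap in the abstract blow-up step: the claim that $\Spec(\breve{A}) \to X$ is ``an abstract blow-up, therefore a $\cdh$-cover'' is not justified. A single finite birational morphism is a $\cdh$-cover only if it is completely decomposed, and for $A$ merely a domain this fails. Concretely, take $R = \FF_p[[t]]$, pick $\alpha \in \FF_{p^2}\setminus\FF_p$ with $\alpha^2 = c\alpha+d$, and set $A = R[X]/(X^2 - ctX - dt^2)$; then $\Frac(A) = \FF_{p^2}((t))$, $\widetilde{A} = \breve{A} = \FF_{p^2}[[t]]$, and the residue extension at the closed point is $\FF_{p^2}/\FF_p$. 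So $\Spec(\breve{A})\to\Spec(A)$ is a finite birational homeomorphism that is \emph{not} completely decomposed. If you try to repair this by throwing in the original center $Z$, the same phenomenon at a height-one prime in a rank-$2$ example (replace $t$ by a height-one uniformiser) shows $\{\Spec(\breve{A}),Z\}$ can still fail to be completely decomposed at an intermediate point not in $Z$.

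The paper closes this gap by strengthening your reduction: rather than passing to irreducible components, it replaces $A$ by $\prod_{\p}(A/\p)^\smallsmile$ over \emph{all} primes $\p$, so that $A$ becomes a pseudo-hvr. Then $\Spec(\widetilde{A})\to\Spec(A)$ is by definition a \emph{completely decomposed} universal homeomorphism, and any finite $A$-algebra sitting between $A$ and $\widetilde{A}$ (the paper builds one by adjoining monic relations for the images of generators of $B$ in $\widetilde{A}$; your $\breve{A}$ would work equally well) is automatically completely decomposed over $A$, hence a one-term $\cdh$-cover. Adopting this preliminary reduction makes the rest of your argument go through.
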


\begin{proof}
Since every $\ldh$-covering is refinable by the composition of an $\fpsl$-covering and a $\cdh$-covering (see the proof of \ref{prop:SCH}\eqref{prop:SCH:hvrrefine}) it suffices to prove the statement for $\cdh$-coverings. %
%Since everything is affine, we will say a morphism of rings $A \to B$ is a $\tau$-covering if $\Spec(B) \to \Spec(A)$ is a $\tau$-covering, for $\tau = \ldh, \fpsl$. 
Replacing $A$ with $\prod_{\substack{\p \subset A \\ \p \textrm{ prime}}} (A/\p)^\smallsmile$ we can assume that $A$ is a pseudo-hvr. That is, $\widetilde{A}$ is a hvr and $\Spec(\widetilde{A}) \to \Spec(A)$ is a completely decomposed universal homeomorphism. Let $\Spec(B) \to \Spec(A)$ be the $\cdh$-covering in question, which we assume is affine, and since our $\cdh$-topology is pulled back from a noetherian scheme, cf.Prop.~\ref{prop:inducedSCH}, we also assume its of finite presentation $B \cong A[x_1, \dots, x_n] / \langle f_1, \dots, f_m\rangle$. Every $\cdh$-covering of a hvr admits a section, so there is a factorisation $A \to B \to \widetilde{A}$. As $A \to \widetilde{A}$ is an integral extension, the images of the $x_i$ in $\widetilde{A}$ satisfy some monic polynomials $g_i(T) \in A[T]$. Then the composition $A \to B \to B' := A[x_1, \dots, x_n] \langle f_1, \dots, f_m, g_1(x_1), \dots, g_n(x_n)\rangle$ is a finite morphism, and its $\Spec$ is completely decomposed because $\Spec$ of the composition $A \to B' \to \widetilde{A}$ is completely decomposed.
\end{proof}

\section{$\ldh$-descent for pseudo-hvrs} \label{sec:hvrldh}

From this point we start working with the following ``Gestern'' condition. A presheaf $F$ satisfies (G1) if:
\begin{enumerate}
 \item[(G1)] For every hvr $R$, the canonical morphism $F(R) \to F(\Frac(R))$ is a monomorphism.
\end{enumerate}

\begin{rema} \label{rema:whyGersten}
We find it disappointing that we do not know a proof avoiding this condition, as its not really clear heuristically why it should be involved in passing traces from $F$ to $F_\ldh$.

Voevodsky shows in \cite[Cor.4.18]{Voe00b} that for a homotopy invariant presheaf with transfers $F$ and a smooth semi-local $k$-scheme $X$, the morphism $F(X) \to F(\eta)$ to the generic scheme $\eta$ is a monomorphism. So, at least in the homotopy invariant setting over a field, traces imply a version of (G1). However, in our setting we do not have this: if $S$ is a noetherian base scheme of dimension $> 0$, $s$ a non-generic point, and $F$ the constant additive sheaf of some nontrivial abelian group $A$, then $F(-\times_S s)$ is clearly $\uh$-invariant, and has traces by Example~\ref{exam:traces}\eqref{exam:traces:const}, however, will not satisfy (G1).
\end{rema}

%On the other hand, Voevodsky shows in \cite[Prop.4.24]{Vctpt} that for a homotopy invariant presheaf with transfers, $F_\Zar(W) = F(W)$ for regular semi-local $k$-schemes. Compare this to Prop.\ref{prop:FisFldhonHvr} 

\begin{prop} \label{prop:FisFldhonHvr}
Suppose that $F$ is a $\uh$-invariant $\zll$-linear presheaf with traces satisfying (G1). Then for any pseudo-hvr $R$ of positive characteristic $p \neq l$, we have $F(R) = F_\ldh(R)$.
\end{prop}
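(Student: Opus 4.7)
The plan is to exhibit, for a hvr $R$, a cofinal family of $\ldh$-covers over which the Čech complex of $F$ already collapses to $F(R)$---using $\uh$-invariance to trade non-finite pathological targets for their finite pseudo-versions, and then invoking the $\fpsl$-descent of Lemma~\ref{lemm:hcVanish}. First I will reduce to the case of a genuine hvr: since $\Spec(\widetilde R) \to \Spec(R)$ is a completely decomposed universal homeomorphism by definition of pseudo-hvr, both $F(R) = F(\widetilde R)$ (by $\uh$-invariance of $F$) and $F_\ldh(R) = F_\ldh(\widetilde R)$ (by Corollary~\ref{coro:Huhinv}), so we may assume $R$ is itself a hvr.

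Next I will construct the cofinal family. An arbitrary $\ldh$-cover of $R$ is refinable by a finite one (Lemma~\ref{lemm:finhvrrefine}), and any finite $\ldh$-cover in turn is refinable by a composition $R \to A \to B$ with $R \to A$ finite $\fpsl$ and $A \to B$ a finite $\cdh$-cover. Choose a pseudo-normalisation $\breve A \subseteq \widetilde A$ of $A$ (Lemma~\ref{lemm:subNormalisation}): the composition $R \to \breve A$ remains finite $\fpsl$ (finite by transitivity, flat because $\breve A \subseteq \widetilde A$ is torsion-free over the valuation ring $R$, and of the same generic degree as $R \to A$ since $A$ and $\breve A$ have the same total ring of fractions), while $\widetilde A$ is a product of hvrs by Lemma~\ref{lemm:normHvr}. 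Pulling the $\cdh$-cover $A \to B$ back along $A \to \widetilde A$ gives a $\cdh$-cover of a product of hvrs, which admits a section (Hensel's lemma for the Nisnevich part, the valuative criterion of properness for the proper completely decomposed part); composing this section with the projection $B \otimes_A \widetilde A \to B$ exhibits $\Spec(\widetilde A) \to \Spec(R)$ as a refinement of the original $\ldh$-cover.

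For covers of this form the Čech computation is immediate. Since $\Spec(\widetilde A) \to \Spec(\breve A)$ is a universal homeomorphism, so is the self-fibre-product $\Spec(\widetilde A \otimes_R \widetilde A) \to \Spec(\breve A \otimes_R \breve A)$ (universal homeomorphisms being stable under base change and composition), and $\uh$-invariance of $F$ identifies
\[
\mathrm{eq}\bigl(F(\widetilde A) \rightrightarrows F(\widetilde A \otimes_R \widetilde A)\bigr) \;=\; \mathrm{eq}\bigl(F(\breve A) \rightrightarrows F(\breve A \otimes_R \breve A)\bigr),
\]
and the right-hand side equals $F(R)$ by the $\fpsl$-descent of Lemma~\ref{lemm:hcVanish} for the finite $\fpsl$-morphism $R \to \breve A$. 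Passing to the filtered colimit over the cofinal family therefore yields $F^+(R) = F(R)$. The main remaining subtlety---and where I expect hypothesis (G1) to enter---is promoting this to the desired $F_\ldh(R) = F(R)$: the same computation must also succeed with $F^+$ in place of $F$ on the Čech objects, and for $\widetilde A \otimes_R \widetilde A$ (which is not itself a product of pseudo-hvrs) the Gersten-type injection provided by (G1) is what pins down $F^+$-sections in terms of their restrictions to the generic points of the components, so that the iteration from $F^+$ to $F_\ldh = F^{++}$ stabilises at $F(R)$.
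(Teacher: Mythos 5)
Your reduction to a genuine hvr and the idea of collapsing \v{C}ech complexes via traces are on the right track, but there are two genuine gaps. First, the claim that $R \to \breve A$ is still $\fpsl$ ``of the same generic degree as $R \to A$ since $A$ and $\breve A$ have the same total ring of fractions'' is false in general: $\breve A$ sits inside $Q(A^\red)$, so its generic degree is $\sum_i [L_i:K]$ (the $L_i$ being the residue fields of the minimal primes of $A$), whereas $\deg(R\to A)=\sum_i m_i[L_i:K]$ with $m_i = \length\,(A\otimes_R K)_{\p_i}$. When $A\otimes_R K$ is non-reduced these differ, and one can arrange $\sum m_i[L_i:K]$ prime to $l$ while $l$ divides $\sum[L_i:K]$ (e.g.\ $l=3$, $A = R\times B$ with $B\otimes_R K \cong L[x]/(x^3)$ and $[L:K]=2$, so the degrees are $7$ and $3$). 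Then $R\to\breve A$ is not $\fpsl$ and Lemma~\ref{lemm:hcVanish} does not apply to it. This is precisely why the paper first corefines by a \emph{generically étale} $\fpsl$-morphism (Lemma~\ref{lemm:refRef}) before pseudo-normalising: generic reducedness is what keeps the degree prime to $l$.

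Second, and more seriously, the step you defer to the last sentence is where all the content lies. The identity $F^+(R)=F(R)$ is in fact immediate from Prop.~\ref{prop:SCH}\eqref{prop:SCH:hvrrefine} and Lemma~\ref{lemm:hcVanish} applied directly to an $\fpsl$-refinement $R\to A$; no normalisation is needed there. The difficulty is $F^{++}(R)=F^+(R)$, which requires controlling $F^+$ on the \v{C}ech objects $A$ and $A\otimes_R A$, and these are not (pseudo-)hvrs. Your appeal to (G1) ``pinning down $F^+$-sections by their restrictions to generic points'' presupposes a Gersten-type injectivity for $F_\ldh$ that in the paper is \emph{deduced from} this proposition (Lemma~\ref{lemm:ldhcdd}), so the sketch is circular as it stands. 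The paper's route is different: injectivity of $F\to F_\ldh$ on pseudo-hvrs follows from (Deg) alone; surjectivity is a diagram chase resting on exactness of $0\to F(X)\to F(Y_0)\to F(Y_1)$, where $Y_0\to X$ has been made generically étale, $Y_1$ is a flat pseudo-normalisation of $Y_0\times_X Y_0$, and the key identity $\Tr_{\pi_1}F(\pi_2)=F(f_0)\Tr_{f_0}$ is verified on the pseudo-hvr $Y_1$ by using (G1) for $F$ to reduce to the generic points, where (CdB) applies. You would need to supply an argument of this kind to close the gap.
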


\begin{proof}
Since $F$ is $\uh$-invariant, so is $F_\ldh$, Cor.\ref{coro:Huhinv}, so we can assume $R$ is a hvr.

Injectivity: $s \in \ker \bigl ( F(R) {\to} F_\ldh(R) \bigr )$ if and only if there is some $\ldh$-covering $f: Y \to \Spec(R)$ such that $F(f)s = 0$. Since $R$ is a hvr we can assume that $f$ is an $\fpsl$-morphism, Prop.\ref{prop:SCH}(4). But then $s \stackrel{(Deg)}{=} \tfrac{1}{\deg f} \Tr_f F(f) s = 0$.

Surjectivity: For every $s \in F_\ldh(R)$ there is an $\ldh$-covering $f: Y \to \Spec(R) =: X$ such that $s|_{Y} \in \operatorname{im} (F(Y) {\to} F_\ldh(Y))$. Since $R$ is a hvr we can assume that $f$ is an $\fpsl$-morphism, Prop.\ref{prop:SCH}\eqref{prop:SCH:hvrrefine}. In fact, we can assume $f$ factors as $Z_0 \stackrel{f_1}{\to} Y_0 \stackrel{f_0}{\to} X$ with $f_1$ a $\uh$-morphism, and $f_0$ a generically étale $\fpsl$-morphism, Lem.\ref{lemm:refRef}. Since $F$ is $\uh$-invariant, so is $F_\ldh$, Cor.\ref{coro:Huhinv}, so we can forget $f_1$ and just work with $f_0$. Since $f_0$ is generically étale, $Y_0 \times_X Y_0$ is generically reduced. Choose $Y_1 = (Y_0 \times_X Y_0)^\smallsmile$ such that the composition $\pi_1: Y_1 \to Y_0 \times_X Y_0 \stackrel{pr_1}{\to} Y_0$ is still flat, Lem.~\ref{lemm:subNormalisation}\eqref{lemm:subNormalisation:flat}, and therefore $\fpsl$. We claim that
\[ 0 \to F(X) \stackrel{F(f_0)}{\longrightarrow} F(Y_0) \stackrel{F(\pi_1) - F(\pi_2)}{\longrightarrow} F(Y_1) \]
is exact where $\pi_2$ is the composition $Y_1 \to Y_0 \times_X Y_0 \stackrel{pr_2}{\to} Y_0$. Indeed, by $\id \stackrel{(Deg)}{=} \tfrac{1}{\deg f_0} \Tr_{f_0}F(f_0)$ it is exact at $F(X)$. For exactness at $F(Y_0)$, we claim that $\Tr_{\pi_1} F(\pi_2) = F(f_0) \Tr_{f_0}$. Indeed, since $Y_1$ is $\Spec$ of a (product of) psuedo-hvrs, and $F$ is $\uh$-invariant, by (G1) it suffices to check this after pulling back to the generic points of $Y_1$. But by (CdB), it suffices to show $\Tr_{\pi_1} F(\pi_2) = F(f_0) \Tr_{f_0}$ holds generically, i.e., over the generic point $\eta$ of $X$. But 
\[ \xymatrix{
\eta \times_X Y_1 \ar[r] \ar[d] & \eta \times_X Y_0 \ar[d] \\
\eta \times_X Y_0 \ar[r] & \eta
} \]
is cartesian, so the claim follows from (CdB). Hence, the above sequence is exact at $F(Y_0)$ since if $F(\pi_2)s' = F(\pi_1)s'$, then $s' 
\stackrel{\textrm{(Deg)}}{=} \tfrac{1}{\deg \pi_1} \Tr_{\pi_1} F(\pi_1)s'
\stackrel{\textrm{cycle}}{=} \tfrac{1}{\deg \pi_1} \Tr_{\pi_1} F(\pi_2)s'
\stackrel{\textrm{(``CdB'')}}{=} \tfrac{1}{\deg \pi_1} F(f_0) \Tr_{f_0}s'$.

So we have extablished that the top row in the following diagram is exact. Injectivity for pseudo-hvrs says the right vertical morphism is injective. Hence, by diagram chase, we find a preimage of $s$ in $F(X)$.
\[ \xymatrix@R=12pt{
0 \ar[r] & F(X) \ar[r] \ar[d] & F(Y_0) \ar[r] \ar[d] & F(Y_1) \ar[d] \\
0 \ar[r] & F_\ldh(X) \ar[r] & F_\ldh(Y_0) \ar[r] & F_\ldh(Y_1)
} \]
%The sheafification can be calculated by applying the \v{C}ech cohomology functor twice \cite[Exp.II.Prop.3.2,Rem.3.3]{SGA4} so it suffices {\color{red} maybe false, since sheafifiaction moves outside hvrs Instead use that $F$ is already separated?} to show that $F(R) = \check{H}^0_\ldh(R, F)$. Every $\ldh$-covering is refinable by an $\fpsl$-covering, Prop.\ref{prop:SCH}\eqref{prop:SCH:hvrrefine}. So $\check{H}^0_\ldh(R, F) = \check{H}^0_\fpsl(R, F)$, and it suffices to show that $F(R) = \check{H}^0_\fpsl(R, F)$. But $F$ is automatically an $\fpsl$-sheaf by Lem.\ref{lemm:hcVanish}.
\end{proof}

\section{Traces on $F_\ldh$} \label{sec:tracesFldh}

In this section we show that for a nice presheaf with traces $F$, the associated $\ldh$-sheaf also has traces. We essentially transplant the method of \cite[Thm.3.5.5]{Kel12}, which becomes much easier in the context of this paper. For another approach to putting trace morphisms on $F_\ldh$ (with more restrictive hypotheses than we use here) see \cite{Kel17}.

Recall that in \cite[\S 3.5]{Kel12} we defined 
\[ F_\cdd(X) = \prod_{x \in X} F(x) \]
for $F$ a presheaf and $X$ a scheme.

\begin{rema} \label{rema:cddbasic}
%\begin{enumerate}
% \item \label{rema:cddbasic:cdd} 
One sees directly that $F_\cdd$ is a $\cdh$-sheaf. More specifically, for any completely decomposed morphism $Y \to X$, the sequence
\[ 0 \to F_\cdd(X) \to F_\cdd(Y) \to F_\cdd(Y \times_X Y) \]
is exact (showing this by hand using a splitting of $\amalg_{y \in Y}y \to \amalg_{x \in X}x$ is an easy exercise).
%
% \item One also sees directly that if $F$ is $\uh$-invariant, then so is $F_\cdd$.
%\end{enumerate}
\end{rema}

What takes a little bit more work is the following theorem.

\begin{prop} \label{prop:cddTraces} {\cite[Thm.3.5.5, Lem.3.3.6(2), Def.3.3.4, Prop.3.5.7]{Kel12}}
Let $F$ be a $\uh$-invariant $\zll$-linear presheaf with traces, where $l \neq \operatorname{char}s$ for all points $s \in S$ of the base scheme. Then there is a unique structure of traces on $F_\cdd$ such that $F \to F_\cdd$ is a morphism of presheaves with traces. Moreover, the trace morphisms on $F_\cdd$ satisfy:

\begin{enumerate}
 \item[(Tr)] If $A$ is a psuedo-hvr, $\phi: A \to B$ an $\fps$ morphism, $\p_1, \dots, \p_n$ are the minimial ideals of $B$, and $\eta_i: B \to (B / \p_i)^\smallsmile$ the canonical morphisms, then 
\[ \xymatrix@R=0pt{
& 
F(B) \ar[rr]^{\sum F(\eta_i)} \ar[ddr]_{\Tr_\phi} && 
\oplus F((B / \p_i)^\smallsmile) \ar[ddl]^{\sum m_i \Tr_{\eta_i \circ \phi}} \\
\Tr_\phi = \sum m_i \Tr_{\eta_i \circ \phi} F(\eta_i) &&& \\
&& 
F(A) & 
} \]
where $m_i = \length\ B_{\p_i}$, and of course, $(B / \p_i)^\smallsmile$ are chosen to be flat over $A$, (or some $\cduh$-extension of it, and we implicitly use $\uh$-invariance), Lem.\ref{lemm:subNormalisation}\eqref{lemm:subNormalisation:flat}.\end{enumerate}
\end{prop}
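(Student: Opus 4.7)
The plan is to define the trace on $F_\cdd$ pointwise via fiber multiplicities and then verify the axioms. Given an fps morphism $\phi: Y \to X$, a point $x \in X$, and a section $s = (s_y)_y \in F_\cdd(Y) = \prod_{y \in Y} F(k(y))$, I would set
\[ (\Tr_\phi s)_x := \sum_{y \in \phi^{-1}(x)} m_y \cdot \Tr_{k(y)/k(x)}(s_y) \in F(k(x)), \]
where $m_y = \length \OO_{Y \times_X x,\, y}$ is the multiplicity of $y$ in the fiber, and $\Tr_{k(y)/k(x)}$ denotes the trace coming from the existing structure on $F$ applied to the fps field extension $\Spec k(y) \to \Spec k(x)$. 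This is visibly the only definition compatible with $F \to F_\cdd$, which will give uniqueness at the end.

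First I would verify (Add), (Deg), and (CdB). (Add) is immediate from the product decomposition. (Deg) reduces to the classical degree formula $\deg(\phi) = \sum_{y \mapsto x} m_y [k(y):k(x)]$ combined with (Deg) for $F$ on each field extension. (CdB) follows by tracking points and multiplicities through the base change square and applying (CdB) for $F$ to each resulting cartesian diagram of residue fields. The main obstacle is (Fon): for a composable pair of fps morphisms $W \to Y \to X$ the two sides become sums over nested fibers, and matching them requires multiplicativity of lengths $m_{z/x} = m_{z/y} \cdot m_{y/x}$ (transitivity of length in iterated fibers) combined with (Fon) for $F$ on each chain $k(x) \to k(y) \to k(z)$.

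Next I would verify that $F \to F_\cdd$ is a morphism of presheaves with traces, which is equivalent to (Tr) in the special case that $A$ is a field. Indeed for $s \in F(Y)$, (CdB) for $F$ applied to the square with corners $Y_x, Y, x, X$ gives $F(x \to X)(\Tr_\phi s) = \Tr_{Y_x \to x}(s|_{Y_x})$, so it suffices to show
\[ \Tr_{Y_x \to x}(s|_{Y_x}) = \sum_{y \mapsto x} m_y \Tr_{k(y)/k(x)}(s|_y) \quad \text{in } F(k(x)). \]
Writing $Y_x = \Spec B$ and decomposing $B = \prod B_{\p_i}$ as a product of local Artinian $k(x)$-algebras, I would apply (Add), then use $\uh$-invariance to replace each $B_{\p_i}$ with its residue field $k(y_i)$, and finally use $\zll$-linearity together with a filtration of $B_{\p_i}$ by powers of its maximal ideal (whose successive quotients are $k(y_i)$-vector spaces) to produce the multiplicity $m_i = \length B_{\p_i}$. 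The hypothesis $l \ne \car k(x)$ is what makes the relevant integers invertible when needed.

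Finally, (Tr) in full generality for an arbitrary pseudo-hvr $A$ follows the same pattern, using Lemma~\ref{lemm:subNormalisation} to arrange that each component $A \to (B/\p_i)^\smallsmile$ is a finite flat morphism landing in a pseudo-hvr, and using $\uh$-invariance to pass freely between these rings and their honest normalizations, which are hvrs. The identity is then checked after evaluation at each point of $\Spec A$, reducing to the field case already handled. Uniqueness is clear at this point: any trace structure on $F_\cdd$ compatible with $F \to F_\cdd$ is determined by its components $F_\cdd(Y) \to F(k(x))$, and (CdB) reduces each such component to the trace $F_\cdd(Y_x) \to F(k(x))$ on the fiber, which the field case of (Tr) determines uniquely. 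I expect the multiplicity bookkeeping in (Fon) to be the most tedious step, while the conceptually delicate point is using $\uh$-invariance to absorb the non-reducedness and non-normality of fibers.
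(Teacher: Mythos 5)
The paper does not actually prove this proposition: it is imported wholesale from \cite{Kel12}, with only the defining formula recalled in the remark that follows. Your pointwise definition $(\Tr_\phi s)_x = \sum_{y\mapsto x} m_y\,\Tr_{k(y)/k(x)}(s_y)$ coincides with that formula, and your architecture (verify the axioms componentwise, reduce (Tr) to fibres via (CdB)) is the right one. The genuine gap is in the one computation that carries all the content: the identity $\Tr_{B/k(x)} = (\length B)\cdot \Tr_{L/k(x)}\circ F(B\to L)$ for a local Artinian fibre algebra $B$ with residue field $L$. You propose to extract the multiplicity from ``a filtration of $B$ by powers of its maximal ideal'' together with $\zll$-linearity, but the trace axioms have no additivity in filtrations of the structure sheaf: they only see finite flat morphisms, compositions, base changes, disjoint unions, and (Deg). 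Moreover (Deg) applied to $B/k(x)$ only pins down $\Tr_{B/k(x)}$ on the image of $F(k(x))\to F(B)$, which is in general a proper subgroup of $F(B)\cong F(L)$, so the identity does not follow from (Deg) plus $\uh$-invariance in the naive way. A correct route: let $L_s$ be the separable closure of $k(x)$ in $L$; by Hensel's lemma $L_s$ lifts to a subfield of the Artinian local ring $B$, giving a factorisation $\Spec(B)\to\Spec(L_s)\to\Spec(k(x))$ whose first map is finite flat \emph{and} a universal homeomorphism (nilpotents plus a purely inseparable residue extension). Then $\uh$-invariance makes $F(L_s\to B)$ an isomorphism, so (Deg) determines $\Tr_{B/L_s}$ on \emph{all} of $F(B)$ as $\dim_{L_s}(B)=[L:L_s]\length(B)$ times $F(L_s\to B)^{-1}$, and (Fon) together with (Deg) for $L/L_s$ yields the desired identity; the filtration by powers of $\mathfrak{m}$ enters only in the linear-algebra count $\dim_{L_s}B=[L:L_s]\length(B)$.

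A secondary point of ordering: your verification of (CdB) precedes this lemma but in fact depends on it. The ``diagram of residue fields'' attached to a base-change square is not cartesian when $k(y)/k(x)$ is inseparable: the true fibre is the non-reduced Artinian ring $k(y)\otimes_{k(x)}k(w)$, and comparing its trace with the multiplicity-weighted sum over its points is precisely the field case of (Tr). So establish that lemma first; with it in hand, (CdB), (Fon) (via multiplicativity of lengths along flat maps, which is correct), the compatibility of $F\to F_\cdd$ with traces, and the reduction of (Tr) for pseudo-hvrs to fibres all go through essentially as you describe.
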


\begin{rema}
We do not need the following description, but in case the reader is interested, we recall that the trace morphisms on $F_\cdd$ are defined as follows. Given an $\fps$ morphism $f: Y \to X$, and $y \in Y$, the $y$th component of the trace morphism $F_\cdd(Y) = \prod_{y \in Y} F(y) \to \prod_{x \in X} F(x) = F_\cdd(X)$ is given by $(\length\ \OO_{x \times_X Y, y}) \Tr_{f|_{y/x}}$.
\end{rema}

\begin{lemm} \label{lemm:ldhcdd}
Suppose that $F$ is a $\uh$-invariant $\zll$-linear presheaf with traces satisfying (G1), and $R$ a finite rank hvr of positive characteristic $p \neq l$. Then on the category of finite $R$-algebras, there is a factorisation $F \to F_\ldh \stackrel{\iota}{\hookrightarrow} F_\cdd$ with $\iota$ a monomorphism.
\end{lemm}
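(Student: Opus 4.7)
The plan is twofold: first, to construct the factorization by showing $F_\cdd$ is itself an $\ldh$-sheaf; and second, to establish injectivity of $\iota$ via a conservative family of fibre functors argument at finite-rank hvrs, where (G1) gives the required injectivity on the nose.

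For the factorization, Remark~\ref{rema:cddbasic} gives that $F_\cdd$ is a $\cdh$-sheaf. Proposition~\ref{prop:cddTraces} shows $F_\cdd$ inherits a canonical $\zll$-linear structure of traces compatible with $F \to F_\cdd$. Applying Lemma~\ref{lemm:hcVanish} to $F_\cdd$ then yields that $F_\cdd$ is also an $\fpsl$-sheaf. Since the $\ldh$-topology is generated by $\cdh$- and $\fpsl$-coverings, $F_\cdd$ is an $\ldh$-sheaf, and the universal property of $\ldh$-sheafification produces the desired factorization $F \to F_\ldh \xrightarrow{\iota} F_\cdd$.

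For injectivity, let $K = \ker(\iota)$ in the category $\Shv_\ldh$. I would use the conservative family of fibre functors provided by finite-rank (equivalently, finite-dimensional) hvrs on the $\ldh$-topos (Corollary~\ref{coro:finiteRankConservative}) to reduce to showing $K(R) = 0$ for every finite-rank hvr $R$. For such $R$, Proposition~\ref{prop:FisFldhonHvr} identifies $F_\ldh(R) = F(R)$. Under this identification, the composition of $\iota_R: F_\ldh(R) \to F_\cdd(R) = \prod_{x \in \Spec R} F(x)$ with projection onto the generic-point factor $F(\Frac R)$ coincides, by naturality, with the canonical restriction map $F(R) \to F(\Frac R)$, which is injective by hypothesis (G1). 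Hence $\iota_R$ is injective and $K(R) = 0$. Since $K$ is an $\ldh$-sheaf vanishing on the conservative family, $K = 0$, so $\iota$ is objectwise injective---in particular on any finite $R$-algebra $A$.

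The main obstacle I anticipate is ensuring the conservative family argument applies cleanly: one needs $F_\cdd$ to genuinely become an $\ldh$-sheaf (for which the application of Lemma~\ref{lemm:hcVanish} is crucial) and the stalks of $F_\ldh$ and $F_\cdd$ at a finite-rank hvr $R$ to agree with their values $F_\ldh(R)$ and $F_\cdd(R)$. A more direct objectwise approach through the pseudo-normalization $A \to \breve A$ runs into the obstacle that $A \to \breve A$ is not in general an $\ldh$-cover---residue field extensions need not be trivial or purely inseparable---so although one can show $s\vert_{\breve A} = 0$ in $F_\ldh(\breve A)$ via Proposition~\ref{prop:FisFldhonHvr} and (G1), descending this vanishing back to $F_\ldh(A)$ is not immediate. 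The conservative family route bypasses this subtlety.
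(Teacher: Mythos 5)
Your construction of the factorisation is valid, and is a genuinely different (slightly heavier) route than the paper's: the paper does not need $F_\cdd$ to be an $\ldh$-sheaf, it simply observes that $F \cong F_\ldh$ on hvrs, hence on fields (Prop.~\ref{prop:FisFldhonHvr}), so that $F_\cdd \cong (F_\ldh)_\cdd$, and takes $\iota$ to be the canonical map $F_\ldh \to (F_\ldh)_\cdd$. Your verification of injectivity at an honest hvr $R$, via Prop.~\ref{prop:FisFldhonHvr} and (G1), is also correct.

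The gap is in passing from hvrs to arbitrary finite $R$-algebras. Corollary~\ref{coro:finiteRankConservative} gives a conservative family of fibre functors for the \emph{cdh}-topos, not the $\ldh$-topos: evaluation at an hvr is not a point of the $\ldh$-topos, because $\ldh$-covers of an hvr do not split --- they are only refinable by $\fpsl$-morphisms, which is precisely why traces are needed at all. One can partially repair this by noting that $K = \ker(\iota)$ is computed objectwise and is in particular a cdh-sheaf, so the cdh-statement applies; but the conclusion is then the vanishing of $K$ on $\Sch_S$ and hence of its left Kan extension at $A$, whose target is $\varinjlim_\lambda F_\cdd(X_\lambda)$ rather than the honest product $F_\cdd(A) = \prod_{x \in \Spec A} F(x)$ that the lemma (and its use in Prop.~\ref{prop:ldhTraces}, where the explicit componentwise traces on $F_\cdd$ must descend) requires; the comparison map $\varinjlim_\lambda F_\cdd(X_\lambda) \to F_\cdd(A)$ is not injective in general.

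The missing idea --- which also dissolves the ``obstacle'' you describe in your closing paragraph --- is to use not the pseudo-normalisation $A \to \breve{A}$ (taken over minimal primes only) but the finite morphism $A \to \prod_{\p \subset A\ \mathrm{prime}} (A/\p)^\smallsmile$ over \emph{all} primes of $A$. This morphism is completely decomposed, since for each $\p$ the generic point of $\Spec((A/\p)^\smallsmile)$ has residue field exactly $k(\p)$ and maps to $\p$; being finite it is proper, hence a cdh-cover, so $F_\ldh(A) \hookrightarrow \prod_\p F_\ldh((A/\p)^\smallsmile)$. Then $\uh$-invariance identifies $F_\ldh((A/\p)^\smallsmile)$ with $F_\ldh((A/\p)^\sim)$, Prop.~\ref{prop:FisFldhonHvr} identifies the latter with $F((A/\p)^\sim)$, and (G1) injects this into $F(k(\p))$, giving $F_\ldh(A) \hookrightarrow \prod_\p F(k(\p)) = F_\cdd(A)$ directly, with no appeal to fibre functors.
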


\begin{proof}
On hvrs, and in particular on fields, $F \cong F_\ldh$, Prop.\ref{prop:FisFldhonHvr}, so $F_\cdd \cong (F_\ldh)_\cdd$. Our injection is $\iota: F_\ldh \to (F_\ldh)_\cdd \stackrel{\sim}{\leftarrow} F_\cdd$.

Let $A$ be a finite $R$-algebra. Spec of the morphism $A \to \prod_{\substack{\p \subset A \\ \p\ \textrm{ prime}}} (A / \p)^\smallsmile$ is a $\cdh$-covering. Since $F_\ldh$ is $\uh$-invariant, Cor.\ref{coro:Huhinv}, each of the morphisms $F_\ldh((A / \p)^\smallsmile) \to F_\ldh((A / \p)^\sim)$ is an isomorphism. But $F \cong F_\ldh$ on the hvrs $(A / \p)^\sim$ and $k(\p)$, Prop.\ref{prop:ldhTraces}, and $F$ satisfies (G1), so each $F_\ldh((A / \p)^\sim) \to F_\ldh(k(\p))$ is injective. Hence, $F_\ldh(A) \to \prod_{\substack{\p \subset A \\ \p\ \textrm{ prime}}} F_\ldh(k(\p)) = (F_\ldh)_\cdd(A) \cong F_\cdd(A)$ is injective.
\end{proof}

\begin{prop} \label{prop:ldhTraces}
Suppose that $F$ is a $\uh$-invariant $\Zll$-linear presheaf with traces satisfying (G1), and $R$ is a finite rank hvr of positive characteristic $p \neq l$. Then on the category of finite $R$-algebras, the trace morphisms on $F_\cdd$ descend to the subpresheaf $F_\ldh$. In particular, $F_\ldh$ has a structure of traces on the category of finite $R$-algebras.
\end{prop}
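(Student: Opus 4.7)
The plan is to fix an $\fps$-morphism $\phi: A \to B$ of finite $R$-algebras and an element $s \in F_\ldh(B)$, viewed inside $F_\cdd(B)$ via the monomorphism of Lemma~\ref{lemm:ldhcdd}, and show that $\Tr_\phi(s) \in F_\cdd(A)$ actually lies in the subgroup $F_\ldh(A)$. Once this membership is established for every $\fps$-morphism, the four trace axioms for $F_\ldh$ are inherited verbatim from those of $F_\cdd$, giving the ``In particular'' conclusion.

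First I would reduce to the case where $A$ is a pseudo-hvr. The morphism $A \to \breve{A} := \prod_\p (A/\p)^\smallsmile$, with $\p$ ranging over the minimal primes of $A$, is a $\cdh$-cover, and both $F_\ldh$ and $F_\cdd$ are $\cdh$-sheaves (by definition of $\ldh$-sheafification and by Remark~\ref{rema:cddbasic}, respectively). Since $F_\ldh \hookrightarrow F_\cdd$ is a monomorphism of $\cdh$-sheaves on finite $R$-algebras, membership of $\Tr_\phi(s)$ in $F_\ldh(A)$ may be checked after restriction to $\breve{A}$. By the (CdB) axiom on $F_\cdd$ applied to the cartesian square with base change $\psi: \breve{A} \to B \otimes_A \breve{A}$ of $\phi$, this restriction equals $\Tr_\psi(s|_{B \otimes_A \breve{A}})$. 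Working componentwise on $\breve{A}$, the task reduces to the following: for $A$ a pseudo-hvr, $\phi: A \to B$ an $\fps$-morphism, and $s \in F_\ldh(B)$, show $\Tr_\phi(s) \in F_\ldh(A) = F(A)$.

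With $A$ a pseudo-hvr, I would apply Lemma~\ref{lemm:subNormalisation}\eqref{lemm:subNormalisation:flat} to produce a compatible pseudo-normalization $B \to \prod_i (B/\q_i)^\smallsmile$ over the minimal primes $\q_i$ of $B$, with each $(B/\q_i)^\smallsmile$ flat over $A$---possibly after enlarging $A$ via a $\cduh$-morphism, which is harmless thanks to $\uh$-invariance (Corollary~\ref{coro:Huhinv}). The (Tr) formula in Proposition~\ref{prop:cddTraces} then decomposes
\[ \Tr_\phi(s) = \sum_i m_i \cdot \Tr_{\eta_i \circ \phi}\bigl(F_\cdd(\eta_i)(s)\bigr), \]
where $\eta_i: B \to (B/\q_i)^\smallsmile$. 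Each target is a pseudo-hvr, so Proposition~\ref{prop:FisFldhonHvr} gives $F_\ldh = F$ there, and consequently $F_\cdd(\eta_i)(s)$---being the restriction of the element $s \in F_\ldh(B)$---lies in the subgroup $F((B/\q_i)^\smallsmile) \subseteq F_\cdd((B/\q_i)^\smallsmile)$. Since $\eta_i \circ \phi: A \to (B/\q_i)^\smallsmile$ is an $\fps$-morphism of pseudo-hvrs and $F \to F_\cdd$ is a morphism of presheaves with traces, $\Tr_{\eta_i \circ \phi}$ sends $F((B/\q_i)^\smallsmile)$ into $F(A) = F_\ldh(A)$. Summing the contributions yields $\Tr_\phi(s) \in F_\ldh(A)$.

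The main obstacle I anticipate is the first step: justifying that $F_\ldh$-membership can be checked via a $\cdh$-cover together with the resulting cocycle condition on $\breve{A} \otimes_A \breve{A}$. This rests on $F_\ldh \hookrightarrow F_\cdd$ being a monomorphism of $\cdh$-sheaves on finite $R$-algebras (immediate from Lemma~\ref{lemm:ldhcdd}) and a small diagram chase combining the sheaf axiom for $F_\ldh$ with $\cdh$-separatedness of $F_\cdd$. A secondary subtlety is the arrangement of flatness in Lemma~\ref{lemm:subNormalisation}\eqref{lemm:subNormalisation:flat}, without which the (Tr) formula does not directly apply; but this is exactly what that lemma is designed to provide, and any residual mismatch is absorbed by $\uh$-invariance via a $\cduh$-enlargement of $A$.
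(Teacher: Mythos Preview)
Your proposal is correct and follows essentially the same route as the paper: reduce to $A$ a pseudo-hvr via the completely decomposed finite cover by $\prod (A/\p)^\smallsmile$ together with the identity $F_\ldh(A) = F_\ldh(A') \cap F_\cdd(A)$ (your ``obstacle'' diagram chase) and (CdB), then use the (Tr) formula of Proposition~\ref{prop:cddTraces} to reduce to $B$ a pseudo-hvr, and conclude via $F \cong F_\ldh$ on pseudo-hvrs and compatibility of $F \to F_\cdd$ with traces. The only cosmetic difference is that the paper takes the product over \emph{all} primes of $A$ rather than just the minimal ones; your choice works equally well since the minimal primes already give a completely decomposed finite cover.
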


\begin{proof}
Explicitly, we want to show that for every $\fps$-morphism of $R$-algebras $\phi: A \to B$, the trace morphism $\Tr_\phi: F_\cdd(B) \to F_\cdd(A)$ sends the image of $F_\ldh(B) \hookrightarrow F_\cdd(B)$ inside the image of $F_\ldh(A) \hookrightarrow F_\cdd(A)$. First note that if $A \to A'$ is a morphism whose $\Spec$ is a completely decomposed proper (=finite) morphism, then a diagram chase using the short exact sequences, Rem.\ref{rema:cddbasic}, %, Prop.\ref{prop:SCH}\eqref{prop:SCH:bu}, 
associated to $A \to A' \rightrightarrows A' \otimes_A A'$ by $F_\ldh \hookrightarrow F_\cdd$ shows that $F_\ldh(A) = F_\ldh(A') \cap F_\cdd(A)$. So replacing $A$ with $A' = \prod_{\substack{\p \subset A \\ \p\ \textrm{ prime}}} (A / \p)^\smallsmile$ and using (Add) and (CdB), we can assume that $A$ is a pseudo-hvr. Now, using the morphism $B \to \prod_{\substack{\q \subset B \\ \q\ \textrm{ prime}}} (B / \q)^\smallsmile$ and the property (Tr) stated in Prop.\ref{prop:cddTraces}, we can assume that $B$ is also a pseudo-hvr. So now $A \to B$ is a $\fps$-morphism between pseudo-hvrs. But then $F(A) \stackrel{\sim}{\to} F_\ldh(A)$, $F(B) \stackrel{\sim}{\to} F_\ldh(B)$ are isomorphisms, Prop.\ref{prop:FisFldhonHvr}. So the result follows from the fact that $F \to F_\cdd$ is a morphism of presheaves with traces, Prop.\ref{prop:cddTraces}.
\end{proof}

\section{Comparison of $\cdh$- and $\ldh$-descent} \label{sec:cdhldhComp}

\begin{theo} \label{theo:mainTheorem}
Suppose $S$ is a finite dimensional noetherian separated scheme of positive characteristic $p \neq l$ and $F$ is a $\uh$-invariant $\zll$-linear presheaf with traces satisfying \emph{both} conditions:
\begin{enumerate}
 \item[(G1)] $F(R) \to F(\Frac(R))$ is injective for every finite rank hvr $R$.

 \item[(G2)] $F(R) \to F(R / \p)$ is surjective for every finite rank hvr $R$ and prime $\p$ of codimension one.
\end{enumerate}
Then the canonical comparison morphism is an isomorphism:
\[ H_\cdh^n(S, F_\cdh) \stackrel{\sim}{\to} H_\ldh^n(S, F_\ldh). \]
\end{theo}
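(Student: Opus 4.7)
The plan is to deploy the change-of-topology spectral sequence
\[ E^{p,q}_2 = H^p_\cdh(S, (\ulh^q_\ldh F)_\cdh) \implies H^{p+q}_\ldh(S, F_\ldh) \]
and show that it collapses onto the $q = 0$ row. Degeneration amounts to two claims: (a) the canonical map $F_\cdh \to F_\ldh$ is an isomorphism of $\cdh$-sheaves, and (b) $(\ulh^q_\ldh F)_\cdh = 0$ for $q > 0$. By the conservativity result of Appendix A, both claims can be checked on finite rank hvrs---equivalently (Cor.~\ref{coro:Huhinv}), on pseudo-hvrs $R$.

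Claim (a) is immediate: every $\cdh$-cover of a pseudo-hvr admits a section, so $F_\cdh(R) = F(R)$, and Prop.~\ref{prop:FisFldhonHvr} gives $F(R) = F_\ldh(R)$. The substantive step is (b), i.e., $H^q_\ldh(R, F_\ldh) = 0$ for $q > 0$ and $R$ a pseudo-hvr. The strategy I would follow is to reduce the local $\ldh$-cohomology of $\Spec(R)$ to $\fpsl$-cohomology: by Lem.~\ref{lemm:finhvrrefine} every $\ldh$-cover of $\Spec(R)$ refines to a finite one, and after passing to its pseudo-normalisation (Lem.~\ref{lemm:subNormalisation}) and using both the splitting of $\cdh$-covers of pseudo-hvrs and the $\uh$-invariance of $\ulh^q_\ldh F$ (Cor.~\ref{coro:Huhinv}), the $\cdh$-part can be absorbed so as to leave a generically \'etale $\fpsl$-refinement (Lem.~\ref{lemm:refRef}). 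With $\fpsl$-hypercovers made cofinal in $\ldh$-hypercovers of $\Spec(R)$, one gets $H^q_\ldh(R, F_\ldh) = H^q_\fpsl(R, F_\ldh)$, and this vanishes by Lem.~\ref{lemm:hcVanish} applied to the $\zll$-linear presheaf with traces $F_\ldh$ (Prop.~\ref{prop:ldhTraces}), which is already an $\fpsl$-sheaf.

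The main obstacle, as I see it, is the cofinality argument itself: packaging the above refinement strategy into an honest cofinal functor between hypercover systems of $\Spec(R)$, while simultaneously verifying that the trace structure on $F_\ldh$---which Prop.~\ref{prop:ldhTraces} only supplies on finite algebras over a finite rank hvr---remains available at every simplicial level arising. The hypothesis (G2), singled out in the introduction as controlling $\ulh^1_\ldh F$ through a blow-up computation, does not appear in the clean $\fpsl$-trace vanishing sketched above; I would expect it to enter separately when handling the low-degree piece $(\ulh^1_\ldh F)_\cdh$ on pseudo-hvrs, precisely where the Čech-to-derived transition threatens to contribute terms not killed by the $\fpsl$-argument and where the surjectivity $F(R) \twoheadrightarrow F(R/\p)$ is needed to compare fibers along a codimension-one degeneration.
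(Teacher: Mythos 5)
Your global frame (change-of-topology spectral sequence, conservativity of finite rank hvrs, reduction to vanishing of $H^q_\ldh$ on hvrs, and the final $\fpsl$-trace vanishing) matches the paper, but there is a genuine gap exactly where you locate ``the main obstacle'': the passage from $H^q_\ldh(R,F_\ldh)$ to $\fpsl$-cohomology cannot be carried out by making $\fpsl$-hypercovers cofinal in $\ldh$-hypercovers of $\Spec(R)$. The refinement results (Lem.~\ref{lemm:refRef}, Prop.~\ref{prop:SCH}\eqref{prop:SCH:hvrrefine}) are only available over (products of) hvrs; at simplicial level $n+1$ of a hypercover one must refine an $\ldh$-cover of $(\cosk_n X_\bullet)_{n+1}$, which is merely a finite flat $R$-algebra and in general \emph{not} a product of (pseudo-)hvrs, so its $\cdh$-part need not split and cannot be ``absorbed''. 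This is precisely the non-Nagata phenomenon the paper is organised around. The paper's resolution is not a cofinality statement but a double induction on $(\dim\Spec(A), j)$ proving $H^j_\ldh(A,F_\ldh)=0$ for \emph{all finite $R$-algebras} $A$, not just pseudo-hvrs: one first replaces $A$ by a pseudo-integral closure $A'$ using the abstract blow-up long exact sequence \eqref{eq:bup} and the dimension induction, then replaces $A'$ by the hvr $R'=\widetilde{A'}$ by $\uh$-invariance, and only then invokes the \v{C}ech-to-derived spectral sequence; its off-diagonal terms $\check{H}^{J-j}_\ldh(R',\ulh^j_\ldh F)$ for $0<j<J$ vanish because $\ldh$-covers of $R'$ refine to finite ones (Lem.~\ref{lemm:finhvrrefine}) and the inductive hypothesis applies to the resulting finite $R'$-algebras. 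That spectral sequence, fed by the induction over all finite $R$-algebras, is what replaces your cofinal functor.

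Relatedly, your treatment of (G2) is left as a conjecture (``I would expect it to enter separately\dots'') rather than a step of the proof. In the paper, (G2) together with $\uh$-invariance of $F_\ldh$ is used at one precise point: in the case $J=1$ of the blow-up sequence \eqref{eq:bup}, to obtain surjectivity of $F_\ldh(A')\to F_\ldh(A'/\p)$ and hence the isomorphism $H^1_\ldh(A,F_\ldh)\cong H^1_\ldh(A',F_\ldh)$. Since your sketch has no blow-up step, it has no place for (G2) to act, which is a symptom of the missing induction. By contrast, your worry about Prop.~\ref{prop:ldhTraces} supplying traces only on finite $R$-algebras is harmless: the \v{C}ech nerve of a finite $\fpsl$-cover of $R'$ consists of finite $R$-algebras, which is all that Lem.~\ref{lemm:hcVanish} needs.
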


\begin{proof}
By the change of topology spectral sequence
\[ H^i_\cdh(S, (\ulh_\ldh^jF)_\cdh) \implies H^{i+j}_\ldh(S, F_\ldh) \]
it suffices to show that $F_\cdh = F_\ldh$, and $(\ulh_\ldh^jF)_{\cdh} = 0$ for $j > 0$. Since finite rank hvrs form a conservative family of fibre functors for the $\cdh$-site $\Sch_S$, Cor.\ref{coro:finiteRankConservative}, and cohomology commutes with filtered limits of schemes with affine transition morphisms, Prop.\ref{prop:SCH}, it suffices to show that for every finite rank hvr $R$ we have $F(R) = F_\ldh(R)$, and $H^j_\ldh(R, F_\ldh) = 0$ for $j > 0$.

It was already shown in Prop.\ref{prop:FisFldhonHvr} that we have $F(R) = F_\ldh(R)$. We now show that for any finite $R$-algebra $A$, we have \begin{equation} \label{equa:toshow}
H^j_\ldh(A, F_\ldh) = 0%; \qquad \textrm{  for } j > 0. 
\end{equation}
for $j > 0$. We work by induction on $(\dim \Spec(A), j)$ where $\NN \times \NN_{>0}$ has the lexicographical ordering. Explicitly, we suppose that \eqref{equa:toshow} is true for $\dim \Spec(A) < \dim \Spec(R)$, and all $0 < j$ and suppose also that \eqref{equa:toshow} is true when $\dim \Spec(A) = \dim \Spec(R)$ and $0 < j < J$. We will show that it is true for $\dim \Spec(A) = \dim \Spec(R)$ and $j = J$.

Here is a plan of what we will prove, where $A' = A^\pic$ is a pseudo-integral closure of $A^\red$ in $(Q(R) \otimes_R A)^\red$, and $R' = \widetilde{A'}$:
\begin{align*}
H_\ldh^J(A, F_\ldh)
&\stackrel{(\alpha)}{\cong} H_\ldh^J(A', F_\ldh) \qquad \textrm{blowup l.e.s, induction via Eq.\eqref{eq:bup}, $\uh$-inv., (G2)} \\
&\stackrel{(\beta)}{\cong} H_\ldh^J(R', F_\ldh) \qquad \uh\textrm{-inv., Cor.\ref{coro:Huhinv}} \\
&\stackrel{(\gamma)}{\cong} \check{H}_\ldh^J(R', F_\ldh) \qquad \textrm{induction via Eq.\eqref{equa:checkCoh}} \\
&\stackrel{(\delta)}{\cong} \check{H}_\fpsl^J(R', F_\ldh) \qquad R' \textrm{is a product of hvrs, Prop.\ref{prop:SCH}\eqref{prop:SCH:hvrrefine}} \\
&= 0 \qquad \qquad \qquad \qquad \textrm{traces, Lem.\ref{lemm:hcVanish}, Prop.\ref{prop:ldhTraces}}
\end{align*}

\emph{Step $\alpha$}. Let $A' = A^\pic$ be a pseudo-integral closure of $A^\red$ inside $(Q(R) \otimes_R A)^\red$, let $\p \subset R$ be the prime of height one (if $\dim \Spec(R) = 0$ set $\p$ to be the unit ideal $\p = R$), and consider the blowup sequence, Prop.\ref{prop:SCH}\eqref{prop:SCH:bu},
\begin{align} \label{eq:bup}
\dots 
&\to H_\ldh^{j-1}(A' / \p, F_\ldh)
\\&\to H_\ldh^j(A, F_\ldh) 
\to H_\ldh^j(A', F_\ldh) \oplus H_\ldh^j(A / \p, F_\ldh)
\to H_\ldh^j(A' / \p, F_\ldh)
\to \dots \notag
\end{align}
By induction on $\dim \Spec(A)$, \eqref{equa:toshow} is true for $A / \p$ and $A' / \p$, so we obtain an isomorphism 
\begin{equation} \label{equa:ldhAAbreve}
H_\ldh^J(A, F_\ldh) \cong H_\ldh^J(A', F_\ldh).
\end{equation}
Here, (G2) and $\uh$-invariance of $F_\ldh$, Cor.\ref{coro:Huhinv}, is used in the case $J = 1$ to obtain surjectivity of the morphism $F_\ldh(A') \to F_\ldh(A' / \p)$.

\emph{Step $\beta$}. Note $\Spec$ of $A' \to R' := \widetilde{A'}$ is a $\uh$. Since $H_\ldh^J(-, F_\ldh)$ is $\uh$-invariant, Cor.\ref{coro:Huhinv}, we are reduced to showing that 
%the isomorphic (cf. Eq.~\eqref{equa:checkCoh}) groups 
$H_\ldh^J(R', F_\ldh)$ % \cong H_\ldh^J(R', F_\ldh)$ 
vanishes.% whenever $R'$ is a product of hvrs. 

\emph{Step $\gamma$}. Consider the \v{C}ech spectral sequence 
\begin{equation} \label{equa:SS}
E_2^{i,j} = \check{H}^i_\ldh(R', \ulh_\ldh^jF) \implies H^{i+j}_\ldh(R', F_\ldh). 
\end{equation}
Note that the $i = 0, j > 0$ part vanishes automatically, since $\check{H}^0_\tau(-, \ulh_\tau^jF) = 0$ vanishes for any topology $\tau$ and $j > 0$. In particular, 
\begin{equation} \label{equa:firstVan}
E_2^{0, J} = \check{H}^0_\ldh(R', \ulh_\ldh^JF) \cong 0.
\end{equation}
Since every $\ldh$-covering of $R'$ is refinable by a finite one, Lem.~\ref{lemm:finhvrrefine}, and $\underline{H}_\ldh^jF(-) = H_\ldh^j(-, F_\ldh)$ vanishes on finite $R'$-algebras for $0 < j < J$ by induction, it follows that
\begin{equation} \label{equa:secondVan}
E_2^{J-j,j} = \check{H}^{J-j}_\ldh(R', \ulh_\ldh^jF) \cong 0; \qquad \textrm{  for } 0 < j < J. 
\end{equation}
The vanishing so far, \eqref{equa:firstVan} and \eqref{equa:secondVan}, with the spectral sequence \eqref{equa:SS} shows that
\begin{equation} \label{equa:checkCoh}
E_2^{J, 0} = \check{H}_\ldh^J(R', F_\ldh) \cong H_\ldh^J(R', F_\ldh).
\end{equation}
%Let $A' = A^\pic$ be a pseudo-integral closure of $A^\red$ in $(Q(R) \otimes_R A)^\red$, let $\p \subset R$ be the prime of height one (if $\dim \Spec(R) = 0$ set $\p$ to be the unit ideal $\p = R$), and consider the blowup sequence, Prop.\ref{prop:SCH}\eqref{prop:SCH:bu},
%\begin{align} \label{eq:bup}
%\dots 
%&\to H_\ldh^{j-1}(A' / \p, F_\ldh)
%\\&\to H_\ldh^j(A, F_\ldh) 
%\to H_\ldh^j(A', F_\ldh) \oplus H_\ldh^j(A / \p, F_\ldh)
%\to H_\ldh^j(A' / \p, F_\ldh)
%\to \dots \notag
%\end{align}
%By induction on $\dim \Spec(A)$, \eqref{equa:toshow} is true for $A / \p$ and $A' / \p$, so we obtain an isomorphism 
%\begin{equation} \label{equa:ldhAAbreve}
%H_\ldh^J(A, F_\ldh) \cong H_\ldh^J(A', F_\ldh).
%\end{equation}
%Here, (G2) and $\uh$-invariance of $F_\ldh$, Cor.\ref{coro:Huhinv}, is used in the case $J = 1$ to obtain surjectivity of the morphism $F_\ldh(A') \to F_\ldh(A' / \p)$.

\emph{Step $\delta$}. Since $R'$ is a product of hvrs, every $\ldh$-covering is refinable by an $\fpsl'$-covering, Prop.\ref{prop:SCH}\eqref{prop:SCH:hvrrefine}. So it suffices to show that the isomorphic groups
\begin{equation} \label{equa:ldhFpsl}
\check{H}_\ldh^J(R', F_\ldh) \cong \check{H}_\fpsl^J(R', F_\ldh)
\end{equation}
are zero.

\emph{Step $\epsilon$}. Since $F_\ldh$ has a structure of traces, Prop.\ref{prop:ldhTraces}, this vanishing follows directly from $\zll$-linearity and the structure of traces, Lem.\ref{lemm:hcVanish}. 
%To recap, 
%\begin{align*}
%0 
%&\cong \check{H}_\fpsl^J(R', F_\ldh) \qquad \textrm{traces, Lem.\ref{lemm:hcVanish}, Prop.\ref{prop:ldhTraces}} \\
%&\cong \check{H}_\ldh^J(R', F_\ldh) \qquad R' \textrm{is an hvr, Prop.\ref{prop:SCH}\eqref{prop:SCH:hvrrefine}} \\
%&\cong H_\ldh^J(R', F_\ldh) \qquad %E_2^{<J, j} = 0, \textrm{ Eq.\eqref{equa:SS}, \eqref{equa:firstVan}, \eqref{equa:secondVan}} 
%\textrm{induction via Eq.\eqref{equa:checkCoh}} \\
%&\cong H_\ldh^J(A, F_\ldh) \qquad \uh\textrm{-inv., Cor.\ref{coro:Huhinv}, blowup l.e.s, Eq.\eqref{eq:bup}, Prop.\ref{prop:SCH}\eqref{prop:SCH:bu}} \\
%&\cong H_\ldh^J(A', F_\ldh) \qquad \textrm{induction via Eq.\eqref{equa:checkCoh}} \\
%&\cong H_\ldh^J(A, F_\ldh) \qquad \textrm{blowup l.e.s, Prop.\ref{prop:SCH}\eqref{prop:SCH:bu}, (G2), induction via Eq.\eqref{eq:bup}}
%\end{align*}
\end{proof}

\section{Appendix A. The $\cdh$- and $\ldh$-topologies} \label{sec:cdhldh}

In \cite{GK15} it was observed that hensel valuation rings, or hvrs, form a conservative family of fibre functors for the $\cdh$-site of a noetherian scheme (see Section~\ref{sec:hvr} for some facts about hvrs). Here, we observe that if $\dim S$ is finite, then in fact, it suffices to consider hvrs of finite rank.

\begin{defi} 
Let $S$ be a separated noetherian scheme, $\Sch_S$ the category of separated finite type $S$-schemes, and $l \in \ZZ$ a prime. We quickly recall the following definitions.
\begin{enumerate}
 \item A morphism $f: Y \to X$ is \emph{completely decomposed} if for all $x \in X$ there exists $y \in Y$ with $f(y) = x$ and $k(y) = k(x)$.
 \item The $\cdh$-topology is generated by families of étale morphisms $\{Y_i \to X\}_{i \in I}$ such that $\amalg Y_i \to X$ is completely decomposed, and families of proper morphisms $\{Y_i \to X\}_{i \in I}$ such that $\amalg Y_i \to X$ is completely decomposed.
 
 \item The $\ldh$-topology is generated by the $\cdh$-topology, and finite flat surjective morphisms of degree prime to $l$.
\end{enumerate}
\end{defi}

\begin{lemm} \label{lemm:hvrFinOK}
Let $A$ be a noetherian ring, $R$ a hvr, and $A \to R$ a morphism. Then $R$ is a filtered colimit $A$-algebras which are finite rank hvrs.
\end{lemm}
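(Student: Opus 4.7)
The plan is to realize $R$ as the filtered union of $A$-subalgebras that are finite rank hvrs. After replacing $A$ by its image in $R$, I may assume $A \subseteq R$. First I would write $R = \bigcup_\alpha A_\alpha$ as the filtered union of its finitely generated $A$-subalgebras; each $A_\alpha$ is noetherian. For each index, set $\p_\alpha := A_\alpha \cap \m_R$ and $B_\alpha := (A_\alpha)_{\p_\alpha}$, so that $B_\alpha$ is a noetherian local subring of $R$ dominated by $R$. Then I would take $V_\alpha := R \cap \Frac(B_\alpha)$, the valuation subring of $\Frac(B_\alpha) \subseteq \Frac(R)$ induced by the valuation of $R$. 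This $V_\alpha$ dominates $B_\alpha$, and by Abhyankar's inequality (the rank of a valuation ring dominating a noetherian local ring of dimension $d$ is at most $d$) it has finite rank.

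Next I would set $R_\alpha := V_\alpha^h$, the henselization of $V_\alpha$. Since the henselization of a valuation ring is again a valuation ring with the same value group (and hence the same rank and the same spectrum), $R_\alpha$ is a hvr of the same finite rank as $V_\alpha$. The universal property of henselization produces a unique local map $R_\alpha \to R$ extending $V_\alpha \hookrightarrow R$. This map is injective: its kernel is a prime of $R_\alpha$ contracting to $(0) \subset V_\alpha$, but $\Spec(R_\alpha) \to \Spec(V_\alpha)$ is a bijection (both are order-isomorphic to the set of convex subgroups of the common value group), so the kernel vanishes.

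Finally I would check that $\{R_\alpha\}$ is filtered and exhausts $R$. If $A_\alpha \subseteq A_\beta$, then $\p_\alpha = \p_\beta \cap A_\alpha$ gives $B_\alpha \hookrightarrow B_\beta$, hence $V_\alpha \subseteq V_\beta$, and the universal property delivers a compatible $R_\alpha \to R_\beta$ over $R$. Every $r \in R$ lies in some $A_\alpha$ (take one containing $r$) and thus in $V_\alpha \subseteq R_\alpha$, so $R = \bigcup_\alpha R_\alpha$. The main obstacle is the two valuation-theoretic inputs---the dimension bound for a valuation ring dominating a noetherian local ring, and the preservation of the valuation ring structure (with the same value group) under henselization---both of which are classical (see for instance \cite{EP05}) but carry essentially all the content; the remainder is bookkeeping.
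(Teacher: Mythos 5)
Your proposal is correct and follows essentially the same route as the paper: write $R$ as the filtered union of its finitely generated $A$-subalgebras, take the valuation ring induced by $R$ on each fraction field, bound its rank by a noetherian dimension argument, then henselize and embed the henselization into $R$ via the universal property. The only (immaterial) difference is that you localize $A_\alpha$ and invoke Abhyankar's inequality for the rank bound, where the paper argues directly that a chain of primes in $R_\lambda$ longer than $\dim A_\lambda$ would contradict the dimension bound of \cite[Thm.5.5.8]{EGAIV2} for finitely generated extensions with the same fraction field.
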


\begin{proof}
Note $A \to R$ factors through a localisation of $A$, and local noetherian rings have finite Krull dimension, so we can assume $A$ is has finite Krull dimension. Certainly, $R$ is the filtered union of its finitely generated sub-$A$-algebras. For each such $A$-algebra $A \to A_\lambda \subseteq R$, define $R_\lambda = \Frac(A_\lambda) \cap R \subseteq \Frac(R)$ to be the valuation ring induced on the fraction field of $A_\lambda$ by $R$. This is finite rank: Certainly, $R_\lambda$ is the union of its finitely generated sub-$A_\lambda$-algebras $A_\lambda \subseteq A_{\lambda \mu} \subseteq R_\lambda$. By $A_\lambda \subseteq A_{\lambda \mu} \subseteq \Frac(A_\lambda)$ we have $\Frac(A_\lambda) = \Frac(A_{\lambda \mu})$ so $\dim A_{\lambda} \geq \dim A_{\lambda \mu}$ \cite[Thm.5.5.8]{EGAIV2}, and therefore%
\footnote{Suppose that $\p_0 \supsetneq \dots \supsetneq \p_n$ is a sequence of prime ideals of $R_\lambda$ with $n > \dim A_\lambda$. For each $i$ choose $a_i \in \p_i \setminus \p_{i +1}$, and consider the finitely generated sub-$A_\lambda$-algebra $A'_\lambda = A_\lambda[a_0, \dots, a_n] \subseteq R_\lambda$. Then $\p_i \cap A'_\lambda \neq \p_{i+1} \cap A'_\lambda$ for each $i$, but by  \cite[Thm.5.5.8]{EGAIV2} we have $\dim A'_\lambda \leq \dim A_\lambda$, so there is a contradiction and we conclude that $\p_0 \supsetneq \dots \supsetneq \p_n$ cannot exist.
} %
$\dim \Spec(A_\lambda) \geq \dim \Spec(R_\lambda)$. 

Consider the henselisations $R_\lambda^h$ of the $R_\lambda$. Henselisations of valuation rings are valuation rings of the same rank, \cite[Tag 0ASK]{Stacks}, so the $R_\lambda^h$ are also of finite rank. The inclusion $R_\lambda \subseteq R$ extends uniquely to an inclusion%
\footnote{The map $R_\lambda^h \subseteq R$ is indeed injective: Henselisations of valuation rings are valuation rings of the same rank, \cite[Tag 0ASK]{Stacks}. In particular, $R_\lambda^h$ is an integral domain, and $\Spec(R_\lambda^h) \to \Spec(R_\lambda)$ is an isomorphism of topological spaces. As $\Spec(R) \to \Spec(R_\lambda)$ sends the generic point to the generic point, $\Spec(R) \to \Spec(R_\lambda^h)$ must also send the generic point to the generic point. In other words, $R_\lambda^h \to R$ is injective.} %
$R_\lambda^h \subseteq R$ as $R_\lambda \subseteq R$ local morphism of local rings (indeed $R_\lambda^* = R^*$) towards a hensel local ring. Moreover, any inclusion of finitely generated sub-$A$-algebras $A_\lambda \subseteq A_{\lambda'} \subseteq R$ induces a unique factorisation $R_\lambda^h \subseteq R_{\lambda'}^h \subseteq R$ for the same reason. For every $a \in R$, there is a finitely generated sub-$A$-algebra $A_\lambda$ with $a \in A_\lambda$. Clearly, this implies $a \in R_\lambda$, so $a \in R_\lambda^h$, and it follows that $R$ is the union of the finite rank hvrs $R_\lambda$, and this is a filtered union because the poset $\{A_\lambda\}$ is filtered.
\end{proof}

\begin{coro} \label{coro:finiteRankConservative}
Let $S$ be a noetherian separated scheme, and $\Sch_S$ the category of finite type separated $S$-schemes equipped with the $\cdh$-topology. For any $S$-scheme $P \to S$ define 
\[ F(P) = \varinjlim_{P \to X \to S} F(X), \]
where the colimit is over factorisations with $X \to S$ in $\Sch_S$. Then the family of functors
\[ \left \{ \begin{array}{ccc}
\Shv_\cdh(\Sch_S) &\to& Set \\
F &\mapsto& F(P)
\end{array}
\middle  | \begin{array}{c}
\Spec(R) \to S \\
R \textrm{ is a finite rank hvr} \end{array} \right \} \]
is a conservative family of fibre functors. 
\end{coro}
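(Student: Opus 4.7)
The plan is to bootstrap from the known result of \cite{GK15} that the collection of \emph{all} hvrs (equipped with a morphism $\Spec(R) \to S$) yields a conservative family of fibre functors on $\Shv_\cdh(\Sch_S)$. What is left is to promote this to the finite-rank subcollection, and the key technical input for that is Lemma~\ref{lemm:hvrFinOK}.

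First I would check that each evaluation $F \mapsto F(R)$ for $R$ a finite rank hvr is a fibre functor. As these correspond to geometric points of the cdh-site in the sense of \cite{GK15} (they are just a subfamily of the fibre functors considered there), this is automatic.

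Next I would show the key compatibility: for any hvr $R$ with a morphism $\Spec(R) \to S$, if $R = \varinjlim_\lambda R_\lambda$ is the filtered presentation as finite rank sub-hvrs $R_\lambda \subseteq R$ provided by Lemma~\ref{lemm:hvrFinOK}, then
\[ F(R) \cong \varinjlim_\lambda F(R_\lambda) \qquad \text{for every $F \in \Shv_\cdh(\Sch_S)$.} \]
This follows from the definition of $F$ on $\SCH_S$ by left Kan extension: because objects $X \in \Sch_S$ are of finite presentation over $S$ (as $S$ is noetherian), every morphism $\Spec(R) \to X$ factors through some $\Spec(R_\lambda) \to X$, and distinct factorisations that agree over $R$ agree over some $R_\lambda$. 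Hence the index category of factorisations $\Spec(R) \to X \to S$ is the filtered colimit of the index categories of factorisations $\Spec(R_\lambda) \to X \to S$, and one reindexes the colimit.

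Finally I would conclude. Suppose $\varphi: F \to G$ is a morphism of cdh-sheaves inducing isomorphisms on every finite rank hvr. For an arbitrary hvr $R$, the displayed formula above gives $\varphi(R) = \varinjlim_\lambda \varphi(R_\lambda)$, which is an isomorphism since filtered colimits are exact. Hence $\varphi$ induces isomorphisms on every hvr, so by \cite{GK15} it is an isomorphism of cdh-sheaves, proving conservativity. The only genuine point of care is the reindexing step of the colimit, where one must use that $\Sch_S$ is of finite presentation over the noetherian base $S$.
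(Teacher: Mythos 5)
Your proposal is correct and follows essentially the same route as the paper: quote the result of \cite{GK15} for the family of all hvrs, then use Lemma~\ref{lemm:hvrFinOK} to write an arbitrary hvr as a filtered colimit of finite rank hvrs and pass to the limit. You in fact spell out the continuity step $F(R) \cong \varinjlim_\lambda F(R_\lambda)$ (via the left Kan extension and finite presentation of objects of $\Sch_S$) more explicitly than the paper, which leaves it implicit.
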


\begin{proof}
It was proven in \cite[Thm.2.3, Thm.2.6]{GK15} that the family of all hvrs induces a conservative family of fibre functors. But Lemma~\ref{lemm:hvrFinOK} says that any hvr is a filtered colimit of finite rank hvrs. So given a cdh-sheaf $F$, if $F(R) = 0$ for every finite rank hvr, we have $F(R) = 0$ for all hvrs, and therefore $F = 0$.
\end{proof}

%\appendix 
\section{Appendix B. Sites of nonnoetherian schemes} \label{sec:nonnoeth}

\begin{defi}
We write $\SCH_S$ for the category of \emph{all}%
\footnote{This is a bit of overkill, since we just really only want to enlarge $\Sch_S$ to include schemes of the form $\Spec(A) {\to} \Spec(R) {\to} S$ with $R$ a finite rank valuation ring and $R \to A$ finite, but whatever. } %
 quasi-compact separated (and therefore quasi-separated) $S$-schemes.
\end{defi}
 
\begin{rema} \label{rema:SCH}
Since our base scheme $S$ will always be a noetherian separated scheme, and in particular quasi-compact quasi-separated, $\SCH_S$ is nothing more than the category of those $S$-schemes $T \to S$ of the form $T = \varprojlim_{\lambda \in \Lambda} T_\lambda$ for some filtered system $T_-: \Lambda \to \Sch_S$ with affine transition morphisms, \cite[Thm.1.1.2]{Tem11}.
\end{rema}

\begin{rema}
Following Suslin and Voevodsky, we use the term \emph{covering family} in the sense of \cite[Exp.II.Def.1.2]{SGA4}. That is, in addition to satisfying the axioms of a pretopology, any family refinable by a covering family is a covering family.
\end{rema}

\begin{prop} \label{prop:inducedSCH}
Let $S$ be a noetherian separated scheme, let $\tau$ be a topology on $\Sch_S$ such that every covering family is refinable by one indexed by a finite set, and let $\tau'$ be the coarsest topology on $\SCH_S$ making $\Sch_S \to \SCH_S$ continuous, cf. \cite[Exp.III.Prop.1.6]{SGA4}. Then the covering families for $\tau'$ are those families which are refinable by pullbacks of covering families in $\Sch_S$.
\end{prop}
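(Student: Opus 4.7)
The plan is to describe $\tau'$ explicitly as the saturation under refinement of the class $P$ of pullbacks of $\tau$-coverings, then argue that this saturation is a topology making $\Sch_S \to \SCH_S$ continuous and is the coarsest such, forcing it to coincide with $\tau'$. Concretely, let $P$ consist of families $\{Y_i \times_X U \to U\}_{i \in I}$ where $\{Y_i \to X\}_{i \in I}$ is a $\tau$-covering in $\Sch_S$ (finite, by hypothesis on $\tau$) and $U \to X$ is a morphism in $\SCH_S$; let $P'$ be the class of families in $\SCH_S$ refinable by a family in $P$. The claim is that $P'$ is a pretopology on $\SCH_S$ whose associated topology is $\tau'$.

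Identity and stability under pullback are straightforward: $\{\id_U\}$ is the pullback of $\{\id_S\}$ along the structure morphism $U \to S$, and the pullback of $\{Y_i \times_X U \to U\}$ along $V \to U$ is $\{Y_i \times_X V \to V\}$, which is the pullback of $\{Y_i \to X\}$ along $V \to X$. The main work is transitivity. Suppose $\{U_i \to U\}_{i \in I}$ is the pullback of a $\tau$-covering $\{Y_i \to X\}$ along $U \to X$, and that for each $i$ the family $\{V_{ij} \to U_i\}_{j \in J_i}$ is the pullback of a $\tau$-covering $\{Z_{ij} \to W_i\}_j$ along some $U_i \to W_i$. I would write $U = \varprojlim_\mu X_\mu$ with $X_\mu \in \Sch_S$ and affine transition morphisms (Rem.\ref{rema:SCH}), so that $U_i = \varprojlim_\mu (Y_i \times_X X_\mu)$ is a similar cofiltered limit of finite-type $S$-schemes with affine transitions. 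Since $W_i$ is finitely presented over $S$, the classical approximation theorem factors each $U_i \to W_i$ through some $Y_i \times_X X_{\mu_i} \to W_i$ in $\Sch_S$; because $I$ is finite we may choose a single $\mu$ dominating every $\mu_i$. Replacing $X$ by $X_\mu$ (and each $Y_i$ by $Y_i \times_X X_\mu$, still a $\tau$-covering by stability of $\tau$), we obtain morphisms $Y_i \to W_i$ in $\Sch_S$ whose base change to $U$ is the given $U_i \to W_i$. Setting $Y'_{ij} := Z_{ij} \times_{W_i} Y_i$, the family $\{Y'_{ij} \to X\}_{i,j}$ is a $\tau$-covering in $\Sch_S$ by stability and composition of $\tau$, and a direct computation $V_{ij} = Y'_{ij} \times_{Y_i} U_i = Y'_{ij} \times_X U$ shows that its pullback to $U$ is precisely $\{V_{ij} \to U\}$; hence the composite lies in $P$.

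Let $\tau_P$ denote the topology on $\SCH_S$ whose covering families are those in $P'$. Every $\tau$-covering $\{Y_i \to X\}$ in $\Sch_S$ lies in $P$ (pullback along $\id_X$), so $\Sch_S \to \SCH_S$ is continuous for $\tau_P$, which gives $\tau' \subseteq \tau_P$. Conversely, any topology $\tau''$ on $\SCH_S$ making $\Sch_S \to \SCH_S$ continuous must declare each $\tau$-covering a $\tau''$-covering, and by pullback-stability of $\tau''$ must then declare every member of $P$ a $\tau''$-covering, whence every family refinable by one; hence $\tau_P \subseteq \tau''$. Taking $\tau'' = \tau'$ yields the desired equality. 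The delicate step is the transitivity verification: it rests crucially on the cofiltered-limit presentation of objects of $\SCH_S$, on the classical fact that morphisms from such a limit into a finitely presented $S$-scheme factor through some member of the system, and on finiteness of the outer index set $I$, which permits choosing a single approximating index $\mu$ uniformly in $i$.
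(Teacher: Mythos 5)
Your proposal is correct and follows essentially the same route as the paper: reduce to verifying the pretopology axioms for the class of families refinable by pullbacks of $\tau$-coverings, where only transitivity is nontrivial, and handle it by writing the $\SCH_S$-object as a cofiltered limit with affine transitions, approximating the morphisms to the finitely presented bases of the second-layer coverings through the system, and using finiteness of the outer index set to pick a single approximating index. The only cosmetic difference is that you spell out the minimality argument identifying your explicit pretopology with $\tau'$, which the paper compresses into its opening sentence.
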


\begin{proof}
Certainly any $\tau$-covering family in $\Sch_S$ must be a $\tau'$-covering family in $\SCH_S$, and therefore the pullback of any $\tau$-covering family in $\Sch_S$ must also be a $\tau'$-covering family in $\SCH_S$, so it suffices to show that the collection of such families (i) contains the identity family, (ii) is closed under pullback, and (iii) is closed under ``composition'' in the sense that if $\{U'_i \to X'\}_{i \in I}$ and $\{V'_{ij} \to U'_i\}_{j \in J_i}$ are such families, then so is $\{V'_{ij} \to X'\}_{i \in I, i \in J_i}$. The first two are clear, so consider the third. By hypothesis, without loss of generality we can assume that $I$ is finite. Suppose $U'_i \to Y_i$, and $X' \to X$ are morphisms with $Y_i, X \in \Sch_S$, and $\{V_{ij} \to Y_i\}, \{U_i \to X\}$ are $\tau$-coverings such that $U_i' = U_i \times_X X'$ and $V_{ij}' = V_{ij} \times_{Y_i} U'_i$. 
\[ \xymatrix@!=6pt{
V_{ij}' \ar[r] \ar@/_6pt/[dd] & U_i' \ar[r] \ar@/_6pt/[dd] \ar[d] & X' \ar[d] & \in \SCH_S\\
&  U_i \ar[r] & X & \in \Sch_S \\
V_{ij} \ar[r] & Y_i 
} \]
%\[ \xymatrix@!=0pt{
%V_{ij} \ar[rr] \ar[d] && U_i \ar[rr] \ar[dl] \ar[dr] && X \ar[d] && \in \SCH_S\\
%V_{ij}' \ar[r] & Y_i && U_i' \ar[r] & X' && \in \Sch_S
%} \]
Without loss of generality we can assume that $X'$ is the limit $\varprojlim_{\Lambda} X_\lambda$ of a filtered system $\{X_\lambda\}$ in $\Sch_S$ with affine transition morphisms, cf. Rem.\ref{rema:SCH}, and since $\hom(\varprojlim X_\lambda, X) = \varinjlim \hom(X_\lambda, X)$ \cite[Cor.8.13.2]{EGAIV3} that $X = X_{\lambda_0}$ for some $\lambda_0 \in \Lambda$. In particular, now $U_i' = \varprojlim_{\lambda \leq \lambda_0} (X_\lambda \times_{X_{\lambda_0}} U_i)$. Now since $\hom(\varprojlim_{\lambda \leq \lambda_0} (X_\lambda \times_{X_{\lambda_0}} U_i), Y_i) = \varinjlim_{\lambda \leq \lambda_0} \hom(X_\lambda \times_{X_{\lambda_0}} U_i, Y_i)$ \cite[Cor.8.13.2]{EGAIV3}, for each $i$ we can assume that $Y_i = X_{\lambda_i} \times_{X_{\lambda_0}} U_i$ for some $\lambda_i \leq \lambda_0$. Choosing a $\mu \leq \lambda_i$ small enough (this is where we use finiteness of $I$) and pulling back everything to $X_\mu$, we can assume that $Y_i = U_i$. In this case, $\{V'_{ij} \to U'_i \to X'\}$ is the pullback of the $\tau$-covering family $\{V_{ij} \to U_i \to X_\mu\}$ in $\Sch_S$.
\end{proof}

In light of Proposition~\ref{prop:inducedSCH}, the $\tau'$-covers in $\Sch_S$ are refinable by the $\tau$-covers, so for such topologies (e.g., $\cdh, \ldh, \fpsl$) we use the same symbol to denote the induced topology on $\SCH_S$. Another consequence of this observation is that the adjunction
\[ \iota^s: \Shv_\tau(\Sch_S) \rightleftarrows \Shv_\tau(\SCH_S): \iota_s \]
induced by the continuous functor $\Sch_S {\to} \SCH_S$ satisfies $\iota_s\iota^s = \id$. See \cite[Expos{\'e} 3]{SGA4} for some material about this basic adjunction. We will write 
\[ \iota^*: \PreShv(\Sch_S) \rightleftarrows \PreShv(\SCH_S): \iota_* \]
for the presheaf adjunction.

%Pro-objects: 
%\[ \hom(\varprojlim X_\lambda, \varprojlim Y_\mu) = \varprojlim_\mu \varinjlim_\lambda \hom(X_\lambda, Y_\mu). \]

\begin{prop} \label{prop:SCH}
Let $S$ be a noetherian separated scheme, and $T {\to} S$ in $\SCH_S$. Write $T$ as the limit 
\[ T = \varprojlim_{\lambda \in \Lambda} T_\lambda \]
of some filtered system $\{T_\lambda\}$ in $\Sch_S$ with affine transition morphisms, cf.~Remark~\ref{rema:SCH}. Let $\tau$ be both a topology on $\Sch_S$ such that every covering family is refinable by one indexed by a finite set, and also the induced topology on $\SCH_S$, e.g., $\tau = \cdh, \ldh, \fpsl$.
\begin{enumerate}
 \item \label{prop:SCH:cech} $\varinjlim_{\lambda \in \Lambda}\check{H}^n_\tau(T_\lambda, F) \stackrel{\sim}{\to} \check{H}^n_\tau(T, \iota^*F)$ for any presheaf $F \in \PreShv(\Sch_S)$.
 
 \item \label{prop:SCH:coh} $H^n_\tau(T, \iota^sF) = \varinjlim_{\lambda \in \Lambda} H^n_\tau(T_\lambda, F)$ for any sheaf $F \in \Shv_\tau(\Sch_S)$. 
 
 \item \label{prop:SCH:ldhRefine} Every $\ldh$-cover of $T$ is refinable by the composition of a $\cdh$-cover \mbox{$\{V_i \to T\}_{i = 1}^n$} and $\fpsl$ morphisms $W_i \to V_i$. 

 \item \label{prop:SCH:hvrrefine} Every $\ldh$-cover of the spectrum of an hvr %$R$ 
 is refinable by an $\fpsl$-morphism. %$\Spec(B) \to \Spec(R)$. If $R$ has finite rank, we can choose $B$ to be integral, and such that $B \to \widetilde{B}$ is a $\cduh$, and $\widetilde{B}$ is also a hvr.

 \item \label{prop:SCH:bu} If $Z \to T$ a closed immersion, and $Y \to T$ a proper morphism, such that $Y \setminus Z \times_T Y \to T \setminus Z$ is an isomorphism, then 
 \[ 0 \to \ZZ(Z \times_T Y) \to \ZZ(Z) \oplus \ZZ(Y) \to \ZZ(T) \to 0 \]
becomes a short exact sequence after $\cdh$-sheafification, where $\ZZ(W) := \ZZ \hom_{\SCH_S}(-, W)$.

 \item If $F \in \PreShv(\Sch_S)$ has a structure of traces, then $\iota^*F \in \PreShv(\SCH_S)$ inherits a canonical structure of traces extending that of $F$.
 
 \item \label{prop:SCH:uhInv} If $F \in \PreShv(\Sch_S)$ is $\uh$-invariant, then $\iota^*F  \in \PreShv(\SCH_S)$ is invariant for finitely presented $\uh$-morphisms, and $\uh$-morphisms between affine schemes with finitely many points.%For example, any scheme finite over a finite rank hvr, the integral closure, and pseudo-integral closure of any such scheme in a finite extension of its ring of total fractions 
\end{enumerate}
\end{prop}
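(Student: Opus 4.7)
The common strategy is to reduce each part to the analogous statement in $\Sch_S$ using the presentation $T = \varprojlim_\lambda T_\lambda$ with affine transition maps (Remark~\ref{rema:SCH}) and Proposition~\ref{prop:inducedSCH}, which identifies $\tau$-covers of $T$ as refinements of pullbacks of $\tau$-covers from some $T_\lambda$. The two key background facts are that $(\iota^*F)(V) = \varinjlim_\lambda F(V_\lambda)$ by construction of $\iota^*$, and that filtered colimits are exact.

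For parts \ref{prop:SCH:cech} and \ref{prop:SCH:coh}, I would fix a $\tau$-cover of $T$, descend it to some $T_\lambda$ via Proposition~\ref{prop:inducedSCH}, and note that the Čech complex for $T$ with coefficients in $\iota^*F$ is the filtered colimit (over $\lambda' \leq \lambda$) of the Čech complexes for the base changes to $T_{\lambda'}$. Since pullbacks of covers of the $T_\lambda$ are cofinal among covers of $T$, colimiting over all covers gives \ref{prop:SCH:cech}; part \ref{prop:SCH:coh} then follows by the standard hypercover computation of sheaf cohomology \cite[Thm.7.4.1]{SGA42} applied termwise. For parts \ref{prop:SCH:ldhRefine} and \ref{prop:SCH:hvrrefine}, I would write any $\ldh$-cover as a finite iterated composition of $\cdh$- and $\fpsl$-steps and swap neighbouring pairs: an $\fpsl$ base change of a $\cdh$-cover is again a $\cdh$-cover, so all $\cdh$-steps can be moved to the front. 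Part \ref{prop:SCH:hvrrefine} is then immediate since $\cdh$-covers of spectra of hvrs admit sections, so the $\cdh$-part is refinable by the identity.

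Part \ref{prop:SCH:bu} reduces to classical $\cdh$-descent for abstract blowup squares in $\Sch_S$: closed immersions and proper morphisms between quasi-compact quasi-separated schemes are finitely presented, so the entire blowup square---together with the isomorphism $Y \setminus Z \times_T Y \cong T \setminus Z$---spreads out to an abstract blowup square in some $\Sch_{T_\lambda}$, and the short exact sequence of $\cdh$-sheaves on $T$ is the filtered colimit of the corresponding exact sequences on the $T_\lambda$. For part (6), $\fps$ morphisms and their iterated fibre products in $\SCH_S$ descend to $\fps$ data in $\Sch_S$ via the passage-to-the-limit principles of EGA~IV.8; one then defines the trace on $\iota^*F$ as a colimit of the traces on $F$, and the four axioms (Add), (Fon), (CdB), (Deg) are verified termwise.

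The main obstacle is part \ref{prop:SCH:uhInv}. For a finitely presented $\uh$-morphism $f : Y \to X$ in $\SCH_S$, one spreads out directly: integrality, surjectivity, and universal injectivity all descend to spread-outs $f_\lambda : Y_\lambda \to X_\lambda$ in $\Sch_S$, so $f_\mu$ is a $\uh$ for $\mu \geq \lambda$ sufficiently large, and $\uh$-invariance of $\iota^*F$ follows from the colimit formula. The subtle case is a $\uh$-morphism $\Spec(B) \to \Spec(A)$ between affine schemes with finitely many points which is not assumed finitely presented (cf.\ the explicit example of a finite non-finitely-presented $\uh$ given after Definition~\ref{defi:uh}). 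Here I would write $B = \bigcup_\alpha B_\alpha$ as the filtered union of its finitely generated sub-$A$-algebras; by Lemma~\ref{lemm:subuh} each $\Spec(B_\alpha) \to \Spec(A)$ is a $\uh$ which is moreover of finite type, so one spreads $B_\alpha$ out further to $B_{\alpha,\lambda} \in \Sch_S$ over some $A_\lambda \in \Sch_S$ (automatically finitely presented since $A_\lambda$ is noetherian), applies the finitely presented case, and commutes $\iota^*F$ with the filtered colimit. Finiteness of the point set of $\Spec(A)$ enters in guaranteeing that only finitely many residue-field extensions must be controlled along the spread-out, so the spread-out $B_{\alpha,\lambda} \to A_\lambda$ remains a $\uh$ for $\lambda$ sufficiently large.
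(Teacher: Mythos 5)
Your overall strategy (reduce to $\Sch_S$ via Proposition~\ref{prop:inducedSCH} and the limit presentation, then pass to the colimit) is the paper's strategy for parts \eqref{prop:SCH:cech}, \eqref{prop:SCH:coh}, \eqref{prop:SCH:hvrrefine} and (6) — for \eqref{prop:SCH:coh} the paper argues with injective resolutions rather than hypercovers, but both routes work. However, three of your reductions have genuine gaps. For part \eqref{prop:SCH:ldhRefine}, the commutation you invoke goes the wrong way. To put a tower in the normal form ``$\cdh$ at the bottom, $\fpsl$ on top'' you must take an $\fpsl$ morphism $W \to V$ followed by a $\cdh$-cover $\{U_i \to W\}$ and produce a $\cdh$-cover of $V$ with $\fpsl$ morphisms over its members refining $\{U_i \to V\}$. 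Base-changing a $\cdh$-cover of $V$ along $W \to V$ only handles $\cdh$-covers of $W$ that are already pulled back from $V$; a general one (e.g.\ a blowup of $W$ in a centre not defined over $V$) is not of this form, and the genuine commutation requires Raynaud--Gruson flatification. This is exactly why the paper does not reprove it but cites \cite[Prop.2.1.12(iii)]{Kel17} (proved as in Lemma~\ref{lemm:uhldh}) and reduces to $\Sch_S$ via Proposition~\ref{prop:inducedSCH}.

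For part \eqref{prop:SCH:bu}, your justification rests on the claim that closed immersions and proper morphisms between quasi-compact quasi-separated schemes are finitely presented; this is false outside the noetherian setting (the ideal of $Z$ in $\OO_T$ need not be finitely generated), so the abstract blowup square need not spread out to any $T_\lambda$. The paper sidesteps this entirely by checking exactness of the sequence of presheaves on hvr-points of the $\cdh$-topos using the valuative criterion. The same finite-type-versus-finite-presentation issue undermines your treatment of part \eqref{prop:SCH:uhInv}: your $\Spec(B_\alpha) \to \Spec(A)$ is a finite-type universal homeomorphism but in general \emph{not} finitely presented (cf.\ the example following Definition~\ref{defi:uh}), and \cite[Thm.8.8.2]{EGAIV3} only descends finitely presented algebras to a finite level of the limit, so ``spreading $B_\alpha$ out further'' is precisely the step that can fail. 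The paper's workaround is to start from finitely presented $\OO_T$-algebras $A'$ surjecting onto finitely generated subalgebras of $\OO_{T'}$ and then to approximate the (possibly non-finitely-generated) kernel by finitely generated subideals; the hypothesis that the schemes have finitely many points is used to guarantee that some finitely presented quotient already induces the correct underlying topological space — not, as you suggest, to control residue field extensions. With that repair your reduction to the finitely presented case does go through.
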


\begin{rema}
In part \eqref{prop:SCH:uhInv} we can actually prove that $\iota^*F$ is invariant for all $\uh$-morphisms, assuming only that $S$ is quasi-compact and quasi-separated, but as we don't need this stronger more general statement in this present work, we do not include its proof. In fact, David Rydh explained to us that the proof below works more or less unchanged, with quasi-compactness of the constructible topology in place of the hypothesis that the schemes have finitely many points. 
\end{rema}

\begin{proof}\ 
\begin{enumerate}
 \item Both surjectivity and injectivity follows directly from Proposition~\ref{prop:inducedSCH} together with \cite[Thm.8.8.2]{EGAIV3} saying that morphisms over $T$ lift through the filtered system $\{T_\lambda\}$.

%Surjectivity: Suppose $a \in \check{H}^n_\tau(T, \iota^*F)$ is represented by some $a \in \check{H}^n(\mathcal{U}/T, \iota^*F)$ on some covering $\mathcal{U} \to T$. By Prop{.}\ref{prop:inducedSCH} we can assume that $\mathcal{U}$ is the pullback of some $\mathcal{U}_\lambda \to T_\lambda$. But $\check{H}^n(\mathcal{U}/T, \iota^*F) = \varinjlim_{\mu < \lambda} \check{H}^n(\mathcal{U}_\mu /T_\mu, F)$ where $\mathcal{U}_\mu = T_\mu {\times_{T_\lambda}} \mathcal{U}_\lambda$. So surjectivity follows. 
 
%Injectivity: Suppose $a \in \check{H}^n(\mathcal{U}_\lambda /T_\lambda, F)$ is sent to zero. Then there is a covering family $\mathcal{V}$ of $T$ refining $T \times_{T_\lambda} \mathcal{U_\lambda}$ on which the image of $a$ vanishes. We can assume that $\mathcal{V}$ is the pullback of a finite cover $\mathcal{V}_\mu$ from some $T_\mu$ and that $\mu < \lambda$, so replacing $\lambda$ with $\mu$, we assume $\mu = \lambda$. By \cite[Thm.8.8.2]{EGAiv3} we have $\hom_T(\mathcal{V}, T {\times_{T_\lambda}} \mathcal{U}) = \varinjlim_{\mu < \lambda} \hom_{T_\mu}(T_\mu {\times_{T_\lambda}} \mathcal{V}_\lambda, T_\mu {\times_{T_\lambda}} \mathcal{U}_\lambda)$, so, possibly refining $\lambda$ further, we can assume that the refinement $\mathcal{V} \to T \times_{T_\lambda} \mathcal{U}_\lambda$ is the pullback of a $T_\lambda$-morphism $\mathcal{V}_\lambda \to \mathcal{U}_\lambda$. Then since $\check{H}^n(\mathcal{V}/T, \iota^*F) = \varinjlim_{\mu < \lambda} \check{H}^n(\mathcal{V}_\mu /T_\mu, F)$, the section $a$ is already zero in the colimit on the left. 

 \item As $i^s$ is exact, the functor $i_s$ preserves injective resolutions.%
 \footnote{If the reader is worried $\SCH_S$ is too big for injective resolutions, then just choose some large enough regular cardinal $\kappa$ and instead work with the category $\SCH_S^{\leq \kappa}$ of quasi-compact separated $S$-schemes which are filtered limits of filtered systems in $\Sch_S$ with affine transition morphisms indexed by a category $\Lambda$ with $< \kappa$ morphisms. Then $\SCH_S^{\leq \kappa}$ will be essentially small.
 } %
Then any injective resolution of $\iota^sF \to \mathcal{I}^\bullet$ restricts to an injective resolution $F = \iota_s\iota^s F \to \iota_s\mathcal{I}^\bullet$, and we have $H^n_\tau(T, \iota^sF) 
= (H^n\mathcal{I}^\bullet)(T) 
% Yoneda commutes with limits so $h_T = \varprojlim h_{T_\lambda}$ on $\SCH_S$ (evaluate on a test scheme $T'$). Sheafification is exact. If $I$ is injective then $\hom(-, I)$ is exact (by definition).
= (H^n\mathcal{I}^\bullet)(\varprojlim T_\lambda) 
\stackrel{(*)}{=} \varinjlim (H^n\mathcal{I}^\bullet)(T_\lambda) 
= \varinjlim (H^n\iota_s\mathcal{I}^\bullet)(T_\lambda) 
= \varinjlim H^n_\tau(T_\lambda, F)$. For (*) note for any injective sheaf $I$, $\hom(-, I)$ is exact by definition, and Yoneda, and sheafification are both left exact.

 \item By Proposition~\ref{prop:inducedSCH}, this follows from the case where $T \in \Sch_S$ which is well-known, \cite[Prop.2.1.12(iii)]{Kel17}. This latter is proved using Raynaud-Gruson flatifiaction, cf. the proof of Lemma~\ref{lemm:uhldh}.
 
 \item By part \eqref{prop:SCH:ldhRefine}, every $\ldh$-cover is refinable by a $\fpsl'$-cover followed by a $\cdh$-cover. But every $\cdh$-cover of a hvr has a section: for completely decomposed proper morphisms this follows from the valuative criterion for properness, and for completely decomposed étale morphisms, this follows from Hensel's Lemma.
 
 \item To check exactness, it suffices to check exactness after evaluating the sequence of presheaves on an hvr. But in this case one readily checks exactness using the valuative criterion for properness.
 
 \item Given a finite flat surjective morphism $f: Y \to X$ in $\SCH_S$, there is a filtered system $(X_\lambda)$ in $\Sch_S$ with affine transition morphisms, Rem.\ref{rema:SCH}, such that $X = \varprojlim X_\lambda$, there is some $\lambda$ and $f_\alpha: Y_\alpha \to X_\alpha$ in $\Sch_S$ such that $f = X \times_{X_\alpha} f_\alpha$, \cite[Thm.8.8.2(ii)]{EGAIV3}. We can assume $f_\alpha$ is surjective and finite, \cite[Thm.8.10.5]{EGAIV3}. By restricting to finitely many affine opens $U \subseteq X_\alpha$, and choosing isomorphism inducing global sections $\OO_X^d \to f_*\OO_Y$, we see that we can also assume $f_\alpha$ is flat. Note that noetherian-ness was used here to kill $\ker(\OO_X^d \to f_* \OO_Y)$.
 
Now define trace morphisms on $\iota^*F$ by choosing such presentations and define $\Tr_f$ to be $\varinjlim \Tr_{f_\lambda}$. Note that this is well defined. Let $a \in F(Y)$ be represented by some $a_\lambda \in F(X_\lambda \times_{X_\alpha} Y_\alpha)$, and let $(X'_\lambda), f'_\alpha: Y'_\alpha \to X'_\alpha, a'_{\lambda'} \in F(X'_\lambda \times_{X'_{\alpha'}} Y'_{\alpha'})$ be some other choice of representative. By \cite[Prop.8.13.1]{EGAIV3} the canonical morphism $X \to X'_{\lambda'}$ factors as some $X \to X_\mu \to X'_{\lambda'}$. Since $X {\times_{X_\alpha}} Y_\alpha \cong Y \cong X {\times_{X'_{\alpha'}}} Y'_{\alpha'}$, there exists some possibly smaller $\mu$ and an isomorphism $X_\mu \times_{X_\alpha} Y_\alpha \cong X_\mu {\times_{X'_{\alpha'}}} Y'_{\alpha'}$ compatible with the former \cite[Thm.8.8.2(i)]{EGAIV3}. As $a_\lambda$ and $a'_{\lambda'}$ agree in the colimit $F(Y)$, possibly making $\mu$ smaller again, we can assume $a_\lambda$ and $a'_{\lambda'}$ already agree in $F(X_\mu \times_{X_\alpha} Y_\alpha) \cong F(X_\mu {\times_{X'_{\alpha'}}} Y'_{\alpha'})$. Then it follows from (CdB) that $\Tr_{f_\lambda}(a_\lambda)$ agrees with $\Tr_{f'_{\lambda'}}(a'_{\lambda'})$ in $F(X_\mu)$, and therefore also in the colimit $F(X)$. 

(Add) Given $f: Y \to X$ and $f': Y' \to X'$, choose representatives for $Y \to X \sqcup X'$ and $Y \to X \sqcup X'$ separately. Then (Add) follows.

(Fon) Given $g: W \to Y$ and $f: Y \to X$ choose a representative for $f$, and then use the system $\{X_\lambda \times_{X_\alpha} Y_\alpha\}$ to choose a representative for $g$. Then (Fon) follows.

(CdB) Choose a representative for $f$, then descend $W$ to $(X_\lambda)$ using \cite[Thm.8.8.2(ii)]{EGAIV3}.

(Deg) is clear.

 \item If $f: T' \to T$ is a finitely presented universal homeomorphism in $\SCH_S$, then there exists some $\alpha \in \Lambda$, and a universal homeomorphism $f_\alpha: T'_\alpha \to T_\alpha$ in $\Sch_S$, such that $f = T \times_{T_\alpha} f_\alpha$, \cite[Thm.8.8.2(ii),Thm.8.10.5(vi)(vii)(viii)]{EGAIV3}, so $\iota^*F(f) = \varinjlim_{\lambda \leq \alpha} F(T \times_{T_\alpha} f_\alpha)$ is an isomorphism. Note that EGA’s ``radiciel'' is equivalent to universally injective.
 
 Suppose that $f: T' = \Spec(\OO_{T'}) \to \Spec(\OO_{T}) = T$ is a universal homeomorphism between affine schemes with finitely many points. Write $\OO_{T'} = \varinjlim_{\OO_T \to A \stackrel{\phi}{\to} \OO_{T'}} A$ as a filtered colimit of finitely presented $\OO_T$-algebras.
 
As $T' \to T$ is a universal homeomorphism each $\Spec(\phi(A)) \to T$ is a finite universal homeomorphism, Lem.\ref{lemm:subuh}. Also, $\phi(A) = \varinjlim_{I \subseteq \ker(A \to \phi(A))} A / I$ is the filtered colimit of the quotients of $A$ by the finitely generated ideals of the kernel $K = \ker(A \to \phi(A))$. Since $\Spec(A)$ has finitely many points, we can always find some finitely generated ideal $J$ such that for each $J \subseteq I \subseteq K \subseteq A$ the closed immersion $\Spec(\phi(A)) \to \Spec(A / J)$ is surjective. Hence, $f: T' \to T$ is a filtered limit of universal homeomorphisms of finite presentation. We have already seen that $\iota^*F$ sends such morphisms to isomorphisms, so $\iota^*F(f)  = \varinjlim F(\Spec(A) \to T)$ is an isomorphism. 
\end{enumerate}
\end{proof}

%\begin{center}
%\includegraphics[width=\textwidth]{ShujiSong.pdf}
%\end{center}

\bibliographystyle{abstract}
\bibliography{bib}

\end{document}